\newtheorem{theorem}{Theorem}[section]
\newtheorem{corollary}[theorem]{Corollary}
\newtheorem{definition}[theorem]{Definition}
\newtheorem{lemma}[theorem]{Lemma}
\newtheorem{proposition}[theorem]{Proposition}
\newtheorem{remark}[theorem]{Remark}
\newcommand{\R}{\mathbb{R}}
\newcommand{\N}{\mathbb{N}}
\newcommand{\E}{\mathbb{E}}
\newcommand{\ep}{\varepsilon}
\newlength{\espaceavantspecialthm}
\newlength{\espaceapresspecialthm}
\newenvironment{defi*}[1][]{
\vskip \espaceavantspecialthm \noindent \textbf{D\'efinition.} }%
{\vskip \espaceapresspecialthm}
\begin{document}

\sloppy

\title{Random actions of homeomorphisms of Cantor sets embedded in a line and Tits alternative}
\author{Dominique Malicet, Emmanuel Militon}
\maketitle

\setlength{\parskip}{10pt}

\begin{abstract}
In 2000, Margulis proved that any  group of homeomorphisms of the circle either preserves a probabilty measure on the circle or contains a free subgroup on two generators, which is reminiscent of the Tits alternative for linear groups. In this article, we prove an analogous statement for groups of locally monotonic homeomorphisms of a compact subset of $\mathbb{R}$. The proof relies on dynamic properties of random walks on the group, which may be of independent interest.
\end{abstract}

\selectlanguage{english}

\section{Introduction}
\subsection{Context}
In \cite{Tits}, Tits proves that any subgroup of the linear group of $\R^n$ either is solvable or contains a free subgroup of rank $2$. It is called the Tits alternative. Since then, analogous statements have been found in many other contexts, and one usually calls a Tits alternative any statement telling that every subgroup of a given group either contains a free group of rank $2$, or has some kind of rigidity. For example let us state Margulis Theorem about subgroups of the group $\mathrm{Homeo}(\mathbb{S}^1)$ of homeomorphisms of the circle.

\begin{theorem}  \label{Margulis} \cite{Mar}\\
 Let $G$ be a subgroup of $\mathrm{Homeo}(\mathbb{S}^1)$. Then:
	\begin{enumerate}
		\item either the group $G$ contains a nonabelian free subgroup,
		\item or the action of $G$ on $\mathbb{S}^1$ preserves a probability measure.
	\end{enumerate}
\end{theorem}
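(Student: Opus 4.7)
The plan is to argue by contradiction: assuming $G$ preserves no probability measure on $\mathbb{S}^1$, I would construct a free subgroup of rank $2$ inside $G$ via a ping-pong argument. First I would reduce to the case where the action of $G$ on $\mathbb{S}^1$ is minimal. If $G$ has a finite orbit, the uniform counting measure on that orbit is $G$-invariant, so we may assume all orbits are infinite. A minimal closed $G$-invariant subset $K$ then exists by Zorn's Lemma, and $K$ is either all of $\mathbb{S}^1$ or a Cantor set; in the Cantor case one collapses the connected components of the complement of $K$ to obtain a quotient minimal action on a circle, and any invariant probability measure on the quotient would lift to one on $\mathbb{S}^1$.

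Next, I would establish the key dynamical \emph{contraction property}: in the minimal non-measure-preserving regime, for every nonempty open arc $U \subset \mathbb{S}^1$ and every proper closed arc $F \subset \mathbb{S}^1$, there exists $g \in G$ with $g(F) \subset U$. The idea is to look at the family of arcs $\{g(F) : g \in G\}$; if their lengths stayed bounded away from $0$ one could extract, by compactness in the Hausdorff topology and a Krylov--Bogolyubov style averaging, a weak-$*$ limit probability measure that is $G$-invariant, contradicting our hypothesis. Minimality is used to ensure that the limits produced are genuine probability measures on $\mathbb{S}^1$ rather than being concentrated on a small invariant set.

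Step three is the ping-pong construction. From the contraction property I would first produce an element $g_1 \in G$ with a north-south type dynamics: two fixed points $a^+, a^-$ such that $g_1^n$ uniformly contracts $\mathbb{S}^1\setminus\{a^-\}$ onto arbitrarily small neighborhoods of $a^+$. Conjugating $g_1$ by a suitable $h \in G$ (provided by minimality) yields a second element $g_2 = h g_1 h^{-1}$ whose attracting and repelling fixed points lie in arcs disjoint from those of $g_1$. The classical ping-pong lemma, applied to the four disjoint arcs, then shows that for $n$ large enough $\langle g_1^n, g_2^n \rangle$ is free of rank $2$.

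The main obstacle is clearly the second step: extracting the contraction property from the mere absence of invariant measure. The averaging argument must be set up carefully to guarantee that weak-$*$ limits are nontrivial probability measures and that they are genuinely $G$-invariant rather than invariant under a subsemigroup only. This is also the step that exploits most heavily the circular order of $\mathbb{S}^1$, through the fact that shrinking arcs localize to points; adapting it to compact subsets of $\mathbb{R}$ that are not connected, where the cyclic/linear order is more delicate and arcs are replaced by clopen pieces, should be the heart of the work the authors undertake in the sequel.
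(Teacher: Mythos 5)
The paper does not reprove Margulis' theorem: Theorem~\ref{Margulis} is cited directly from \cite{Mar}, and the authors' own work goes into the generalization to compact subsets of $\R$ (Theorem~\ref{Tits}), which they attack by an entirely probabilistic route (random walks, harmonic measures, inverse-walk convergence). Your sketch is a blueprint of the classical \emph{deterministic} proof in the style of Margulis, Ghys and Navas, so the comparison is between your topological strategy and the paper's probabilistic one. The two are genuinely different, and the authors explicitly remark that they found no purely topological proof of their more general Proposition~\ref{globalcontrcorollary}; your last paragraph correctly identifies why --- once $K$ is disconnected, arcs, minimality, and collapsing arguments lose their force.

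That said, two concrete points in your sketch of the circle proof are off. First, the ``Krylov--Bogolyubov style averaging'' step is not valid: Krylov--Bogolyubov produces invariant measures for a single map, or more generally for actions of amenable (semi)groups, by taking weak-$*$ limits of Ces\`aro averages. For a general $G$ (which is precisely the case when no invariant measure exists), such averages need not converge to anything $G$-invariant; indeed the whole point of the theorem is that in that regime no invariant probability measure exists to be found. The classical argument is different: one shows that a minimal action that is not equicontinuous (equicontinuity would make it conjugate to a rotation group, hence measure-preserving) admits a sequence $(g_n)$ collapsing some arc of definite length to a point, and then uses minimality to propagate this contraction. This is the technical heart, and replacing it by an averaging appeal is a genuine gap. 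Second, the contraction property you state (``for \emph{every} proper closed arc $F$ and every open $U$ there is $g$ with $g(F)\subset U$'') is strictly stronger than what is true: for minimal non-measure-preserving actions there is in general only a finite \emph{degree of proximality} $d$, meaning arcs below a fixed threshold can be contracted but one may have $d$ points no two of which can ever be brought close (think of a group acting through a $d$-to-$1$ covering of the circle). The ping-pong still works, but $g_1$ and $g_2$ then each have $d$ attractors and $d$ repellers, and one plays with $2d$ disjoint intervals on each side rather than a single pair. This is exactly the subtlety the paper keeps track of through the cardinal $p$ in Proposition~\ref{globalcontrcorollary} and the ``degree of $m$-proximality'' in the inverse-walk section.

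What each approach buys: your deterministic route, once the two gaps above are filled, gives the circle theorem directly and transparently, but it leans hard on the connectedness and circular order of $\mathbb{S}^1$ (arcs, their lengths, minimal Cantor sets collapsing to a circle). The paper's probabilistic route replaces ``an arc gets short under some $g$'' by ``a neighborhood of $\mu$-a.e.\ point gets exponentially small under a typical random product'' (Proposition~\ref{localcontr}), which makes sense on any compact metric space and localizes away from the finitely many break points that obstruct globalization; this is what lets the argument survive the passage from $\mathbb{S}^1$ to a Cantor set in $\R$, at the cost of being non-constructive and requiring the entropy/Birkhoff machinery of Section~3 and the inverse random walk analysis of Section~4.
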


Other proofs of this theorem exist, for example in \cite{Maly}, \cite{Ghys}, \cite{Nav}. For other analogous statements in other contexts, let us cite for example \cite{A}, \cite{BG}, \cite{HM} and \cite{MC}.

The proofs of Theorem \ref{Margulis} always rely on the fact that homeomorphisms of the circles preserve or reverse the circular order. This rigidity allows to find some structure on the group actions. On the other hand, it is very hard to obtain some kind of Tits alternative for groups of homeomorphisms of a higher dimensional manifold (see however \cite{HX} for a result in this direction). In this article, we study an intermediate case: if $K\subset \mathbb{S}^1$ is a compact subset of the circle, we consider groups of homeomorphisms of $K$ which locally preserve or reverse the order. If $K= \mathbb{S}^1$, the group is simply $\mathrm{Homeo}(\mathbb{S}^1)$, but if $K$ is smaller, typically a Cantor set, then the structure of the group actions may complexify. Indeed, some  elements of the group might permute various parts of $K$. In fact, if $K$ is a Cantor set it is possible to prove that any finite group acts faithfully on $K$ in a locally monotonic way.

We prove a Tits alternative for this kind of actions. The usual proofs of Margulis Theorem do not adapt well to this more general context, and we actually did not manage to find a purely topological proof: our proof relies on probabilistic tools, and more precisely on properties of random walks on the group.

\subsection{Main theorem}

Let us give a precise statement of our theorem. We will actually consider compact subsets of $\R$ instead of $\mathbb S^1$ to simplify a bit the definition.  Let us begin by defining the notion of local monotonicity.

\begin{definition}\label{locmon}
	 If $E\subset \R$, a map $f:E\rightarrow \R$ is said to be locally monotonic if it is monotonic on some neighborhood of every point of $E$ (endowed with the order induced by $\R$). Equivalently, for any $x$ in $E$ there exists an open interval $I$ containing $x$ such that $f|_{I\cap E}$ is monotonic.
\end{definition}

Our main result is the following.

\begin{theorem} \label{Tits} 
	Let $K$ be a compact subset of $\R$. Let $G$ be a subgroup of $\mathrm{Homeo}(K)$ whose elements are locally monotonic on $K$. Then:
	\begin{enumerate}
		\item either the group $G$ contains a nonabelian free subgroup,
		\item or the action of $G$ on $K$ preserves a probability measure.
	\end{enumerate}
\end{theorem}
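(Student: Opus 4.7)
The plan is to prove the contrapositive: assume that $G$ preserves no Borel probability measure on $K$, and produce a nonabelian free subgroup. Since the authors note that no purely topological argument was available, I would follow the probabilistic strategy of Antonov and of Deroin--Kleptsyn--Navas for the circle. After replacing $G$ by a suitable two-generator subgroup if necessary, fix a finitely supported symmetric probability measure $\mu$ on $G$ whose support generates $G$, and study the i.i.d.\ right random walk $f_n = g_n g_{n-1}\cdots g_1$. By Kakutani--Markov applied to the $\mu$-averaging operator on the compact convex space of probability measures on $K$, there exists a $\mu$-stationary measure $\nu$; the hypothesis forces $\nu$ not to be $G$-invariant.

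The central step I would try to establish is almost sure contraction: for $\nu\otimes\nu$-almost every pair $(x,y)\in K\times K$ and almost every sample path, $|f_n(x)-f_n(y)|\to 0$. This is where local monotonicity enters: each $g\in G$ can be described, on a fine enough finite partition of $K$ into monotonic pieces, as a permutation of pieces composed with a piecewise-monotonic continuous map, so that a suitable distance-like quantity becomes a nonnegative supermartingale with respect to a filtration adapted to the walk. It converges almost surely, and a zero--one argument should rule out positive limits, using that no probability measure is invariant (in particular no finite orbit exists, since averaging point masses on a finite orbit would yield one).

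With contraction in hand, I would extract along a realization of the walk a sequence $(f_{n_k})$ sending the complement of a small ``repelling'' set into an arbitrarily small neighborhood of an ``attracting'' point. The symmetry of $\mu$ and the independence of the increments provide a second sequence $(h_{m_k})$ whose attracting and repelling sets are disjoint from those of $(f_{n_k})$; the ping-pong lemma then assembles a free subgroup of rank two from large powers of well-chosen $f_{n_k}$ and $h_{m_k}$.

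The hardest part will be precisely what defeats the order-theoretic proof of Margulis' theorem in this setting: the absence of a global order preserved up to sign by $G$. A locally monotonic homeomorphism can permute far-apart pieces of $K$ arbitrarily, so an attracting set need not be a single interval but a union of pieces carrying nontrivial combinatorial data, and the fine partitions implicit in local monotonicity do not interact straightforwardly with composition. One will have to show that, along the random walk, these combinatorial choices average out, so that the supermartingale contraction produces genuine shrinking in the metric sense and can be fed cleanly into the ping-pong construction.
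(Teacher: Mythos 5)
Your overall strategy — random walk, stationary measure, almost-sure contraction, ping-pong — is the same probabilistic template the paper follows, but your proposal stops short exactly where the paper's real work begins, and the step you flag at the end as ``the hardest part'' is left as a hope (``these combinatorial choices average out'') rather than solved. That is a genuine gap, not a technicality.

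More concretely: a supermartingale argument in the style of Antonov or Deroin--Kleptsyn--Navas can plausibly deliver contraction for $\nu\otimes\nu$-a.e.\ pair, and the paper's Section 3 proves something stronger (local contraction at \emph{every} point, via an entropy/Jensen argument following Malicet's \emph{Random walks on} $\mathrm{Homeo}(\mathbb{S}^1)$ rather than a supermartingale; the two mechanisms are interchangeable here). But pointwise or measure-generic contraction does not, by itself, give you the attracting/repelling sets you need for ping-pong, precisely because of the break points you mention. The paper's key contribution is to turn your vague ``averaging out'' into a theorem: the break points of $f_\omega^n = f_{\omega_{n-1}}\circ\cdots\circ f_{\omega_0}$ are contained in $\bigcup_{k\le n}(f_\omega^k)^{-1}(\mathrm{break}(f_{\omega_k}))$, so they are tracked by the \emph{inverse} random walk $f_{\omega_0}^{-1}\circ\cdots\circ f_{\omega_{k-1}}^{-1}$. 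The paper proves (Theorem on the inverse random walk, via an $m$-proximality argument and a Markov-chain summability lemma) that under the fast local contraction property this inverse walk has almost surely only finitely many cluster values, so for a typical $\omega$ the break points of all the $f_\omega^n$ accumulate on a fixed finite set $F(\omega)$. Only then does the circle-style globalization go through, away from $F(\omega)$, yielding a uniform bound $p$ on the size of the exceptional set and hence the contraction property needed for ping-pong (combined with a purely set-theoretic ``displacement'' lemma guaranteeing one can move finite sets off each other when there is no finite orbit). Your proposal contains neither the observation that break points live on an inverse random walk, nor any mechanism to control their accumulation; without that, the ping-pong step cannot be carried out and the proof does not close.
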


\begin{remark}
Our proof would adapt very well if $K$ was supposed to be a subset of $\mathbb S^1$ instead of a subset of $\R$. However, in the case $K\not=\mathbb S^1$, we can suppose $K \subset \mathbb{R}$ by cutting the circle at a point outside $K$, and the case $K=\mathbb S^1$ is a direct application of Margulis Theorem. Hence such a statement would not be more general.
\end{remark}

\subsection{Applications}

\subsubsection*{Groups of diffeomorphisms of a Cantor set}

We can naturally apply Theorem \ref{Tits} to groups of diffeomorphisms of a Cantor subset of the real line. This kind of group has been studied for exemple in \cite{FN} and \cite{MM}. We refer to these articles for precise definitions.  Let us simply recall here that a metric space $K$ is a Cantor set if it is infinite, any of its points is an accumulation point and it is totally disconnected, and that a diffeomorphism of a Cantor set of $\R$ is a homeomorphism which is locally the restriction of a diffeomorphism. In particular it is locally monotonic. Finally we recall that if $K$ is the standard ternary Cantor set then its group of diffeomorphisms is Thompson's group $V$.\\

The following result is an immediate corollary of Theorem \ref{Tits}.

\begin{corollary}
	Let $K\subset \R$ be a Cantor set and $G$ be a subgroup of $\mathrm{Diff}^1(K)$, then:
	\begin{enumerate}
		\item either the group $G$ contains a nonabelian free subgroup,
		\item or the action of $G$ on $K$ preserves a probability measure.
	\end{enumerate}
\end{corollary}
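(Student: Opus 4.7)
The author already advertises this as an \emph{immediate} corollary of Theorem \ref{Tits}, so the proof plan reduces to checking the one hypothesis of Theorem \ref{Tits} that is not verbatim in the statement: namely, that every element of $\mathrm{Diff}^1(K)$ is locally monotonic on $K$ in the sense of the definition given above. Once this inclusion $\mathrm{Diff}^1(K)\subset\{f\in\mathrm{Homeo}(K):f\text{ locally monotonic}\}$ is established, Theorem \ref{Tits} applies verbatim to $G$ and gives the two alternatives.

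The plan for the inclusion is to unwind the definition of a $C^1$ diffeomorphism of $K$. Fix $f\in\mathrm{Diff}^1(K)$ and $x\in K$. By definition there exist an open interval $U\subset\R$ containing $x$ and a $C^1$ diffeomorphism $F:U\to V$ onto an open set $V\supset f(U\cap K)$ such that $F|_{U\cap K}=f|_{U\cap K}$. Because $F$ is a diffeomorphism of open subsets of $\R$, its derivative $F'$ does not vanish; in particular $F'(x)\neq 0$. By continuity of $F'$, there is an open subinterval $I\subset U$ containing $x$ on which $F'$ has constant sign, so that $F|_I$ is strictly monotonic. Its restriction $f|_{I\cap K}=F|_{I\cap K}$ is therefore strictly monotonic, which is exactly the local monotonicity condition at $x$. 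Since $x$ was arbitrary, $f$ is locally monotonic on $K$.

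Having verified the hypothesis, I would simply invoke Theorem \ref{Tits} applied to the subgroup $G\subset\mathrm{Homeo}(K)$ (noting that a Cantor subset of $\R$ is in particular compact, as required) to conclude that either $G$ contains a nonabelian free subgroup, or $G$ preserves a probability measure on $K$. There is essentially no obstacle here: the only subtlety is making sure that the local definition of a $C^1$ diffeomorphism of a Cantor set really does force nonvanishing of the derivative of the local extension (this is part of what ``diffeomorphism'' means in \cite{FN,MM} and is why the corollary is stated for $\mathrm{Diff}^1$ rather than just $C^1$ maps). Once that is granted, the argument is a one-line reduction to the main theorem.
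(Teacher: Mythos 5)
Your proposal is correct and follows essentially the same route as the paper, which likewise reduces the corollary to the observation (stated explicitly just before the corollary) that a diffeomorphism of a Cantor set is locally the restriction of a diffeomorphism of $\R$ and hence locally monotonic, after which Theorem \ref{Tits} applies directly. The extra detail you supply — nonvanishing of $F'$ forcing strict monotonicity on a small interval — is exactly the unwinding the authors leave implicit.
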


If we assume that the diffeomorphisms are $C^2$ and that the group is finitely generated, then we can use Sacksteder theorem to deduce that in the second case of the alternative,  the group actually has a finite orbit. The following corollary answers a question by Funar and Neretin (see the last section of \cite{FN}) who asked whether any subgroup of the group of diffeomorphisms of a Cantor subset of the real line either had a finite orbit or contained a free subgroup on two generators. Observe that, when you replace $\mathrm{Diff}^2(K)$ by $\mathrm{Diff}^1(K)$, the corresponding statement does not hold anymore because of the existence of Denjoy counterexamples : the group generated by a Denjoy counterexample is a subgroup of the group of diffeomorphisms of its minimal subset and does not have any finite orbit.

\begin{corollary}
If $K\subset \R$ is a Cantor set, if $G$ is a finitely generated subgroup of $\mathrm{Diff}^2(K)$, then one of the two following properties holds.
	\begin{enumerate}
	\item either the group $G$ contains a free subgroup of rank $2$,
	\item or the group $G$ has at least one finite orbit.
\end{enumerate}
\end{corollary}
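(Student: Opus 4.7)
The plan is to invoke Theorem \ref{Tits} and then apply Sacksteder's theorem to the second alternative. Every $C^2$ diffeomorphism of $K$ is locally the restriction of a smooth diffeomorphism of $\R$, hence locally monotonic, so $G$ satisfies the hypotheses of Theorem \ref{Tits}. I may therefore assume that $G$ preserves a probability measure $\mu$ on $K$ (otherwise $G$ contains a free subgroup of rank $2$) and aim to produce a finite orbit.

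Pick a nonempty minimal $G$-invariant closed subset $M \subset \mathrm{supp}(\mu)$, which exists by Zorn's lemma. The set of non-isolated points of $M$ is closed in $M$ and $G$-invariant (since homeomorphisms preserve the property of being isolated), so by minimality it equals either $\emptyset$ or $M$. In the first case, $M$ is discrete and compact, hence finite, and being minimal it is itself a finite $G$-orbit, so we are done. In the second case, $M$ is a closed, perfect, totally disconnected subset of $\R$, i.e., a Cantor set.

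Now apply Sacksteder's theorem to the finitely generated group $G$ acting on the minimal Cantor set $M \subset \R$ by (restrictions of) $C^2$ diffeomorphisms: there exist $g \in G$ and $x \in M$ with $g(x) = x$ and $0 < g'(x) < 1$. Then there is a neighborhood $I$ of $x$ in $K$ with $g(I) \subset I$ and $\bigcap_{n \geq 0} g^n(I) = \{x\}$. Since $g$ is a homeomorphism and $\mu$ is $g$-invariant, $\mu(g^n(I)) = \mu(I)$ for every $n$, and passing to the intersection of this decreasing sequence gives $\mu(I) = \mu(\{x\})$. As $x \in M \subset \mathrm{supp}(\mu)$, the open-in-$K$ neighborhood $I$ has positive $\mu$-mass, forcing $\mu(\{x\}) > 0$; but every point of the $G$-orbit of $x$ then carries the same positive mass $\mu(\{x\})$, so this orbit must be finite, yielding the desired finite orbit.

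The delicate step is the appeal to Sacksteder's theorem, since $G$ acts on the Cantor set $K$ only through locally $C^2$ diffeomorphisms rather than as a subgroup of $\mathrm{Diff}^2$ of an ambient interval. Passing to the pseudogroup generated by local $C^2$ extensions of the elements of $G$, however, the standard distortion-estimate proof of Sacksteder applies and delivers the hyperbolic contracting fixed point used above. Everything else is soft: minimal set analysis, invariance of $\mu$, and the pigeonhole-type observation that atoms of equal positive mass form a finite set.
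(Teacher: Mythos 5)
Your proof follows essentially the same route as the paper's: apply Theorem \ref{Tits} to obtain an invariant measure $\mu$, pass to a minimal invariant set in $\mathrm{supp}(\mu)$, and in the Cantor case invoke Sacksteder's theorem to produce a hyperbolic fixed point $x$, using $g$-invariance of $\mu$ to force an atom at $x$. The only cosmetic difference is in the closing step: the paper deduces that $x$ would be isolated in the minimal Cantor set $F$, a contradiction showing $F$ is finite, while you pass directly from $\mu(\{x\})>0$ to finiteness of the $G$-orbit of $x$; both are valid and equivalent in substance. Your extra care — justifying the finite/Cantor dichotomy via minimality and flagging the pseudogroup formulation needed to apply Sacksteder to locally $C^2$ maps of a Cantor set — fills in details the paper leaves implicit.
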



\begin{proof}
	If $G$ does not contain a nonabelian free subgroup, then, by Theorem \ref{Tits}, there exists a probability measure $\mu$ on $K$ which is $G$-invariant. Then the support $\mbox{supp}(\mu)$ of $\mu$ is $G$-invariant. Hence it contains a minimal $G$-invariant closed subset $F$, which is either a finite set or a Cantor set. In the first case we are done. In the second case we use Sacksteder theorem to say that there exists $x$ in $F$ which is a hyperbolic fixed point of an element $g$ of $G$, that we can assume attracting. Then if $U$ is the basin of attraction of this fixed point, the invariance of $\mu$ by $g$ implies that the restriction of $\mu$ to $U$ is a multiple of $\delta_x$. So $x$ is isolated in $F$, which is a contradiction since the subset $F$ is a Cantor set.
\end{proof}

%

\subsubsection*{Generalized interval exchange transformations}

In what follows, we give a consequence of Theorem \ref{Tits} for groups of generalized interval exchange transformations. Let us recall the definitions of interval exchange transformations and generalized interval exchange transformations.

\begin{definition}
	Let $I=[a,b]$ be a compact interval of $\R$.
	
	A map $f:I\rightarrow I$ is an interval exchange transformation (IET) of $I$ if it is one to one, piecewise continuous, and its restriction to every interval on which it is continuous is a translation.
	
	A map $f:I\rightarrow I$ is a generalized interval exchange transformation (GIET) of $I$ if it is one to one and piecewise continuous.
\end{definition}

	
	

Composition gives a group structure to the sets of interval exchange transformations and of generalized interval exchange transformations of an interval $I$. Those groups are respectively denoted by $IET(I)$ and $GIET(I)$. 
We are going to obtain a Tits alternative for subgroups of $GIET(I)$. However, we need to be careful with the notion of invariant measure because of the discontinuity points. We will use the following definition.

\begin{definition}\label{GIETmeasure}
We say that a subgroup $G$ of $GIET(I)$ preserves a probability measure on $I$ if either there exists a $G$-invariant probability measure on $I$ without atoms or there exists a point $x\in I$ whose left orbit $O^-(x)=\{g(x^-),g\in G\}$ or right orbit $O^+(x)=\{g(x^+),g\in G\}$ is finite, where $g(x^-)$ (resp. $g(x^+)$) is the left (resp. right) limit of $g$ at $x$.
\end{definition}

With this definition we can formulate a Tits alternative in the case of generalized interval exchange transformations.

\begin{theorem}\label{TitsIET}
	Let $I=[a,b]$ be a compact interval of $\R$. Let $G$ be a finitely generated subgroup of $GIET(I)$. Then one of the following properties holds.
	\begin{enumerate}
		\item The group $G$ contains a free subgroup of rank $2$;
		\item The group $G$ preserves a probability measure on $I$.
	\end{enumerate}	
\end{theorem}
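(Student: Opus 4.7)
The plan is to reduce Theorem~\ref{TitsIET} to Theorem~\ref{Tits} through a Denjoy-type doubling of the countable set of discontinuities of the action. Fix generators $g_1,\ldots,g_n$ of $G$, let $D=\{a,b\}\cup\bigcup_{i=1}^n D_i$ where $D_i$ is the finite discontinuity set of $g_i^{\pm 1}$, and set $S=G\cdot D$. Since the discontinuity set of any word in the $g_i^{\pm 1}$ is contained in the $G$-orbit of $D$, the set $S$ is countable, $G$-invariant, and contains every discontinuity of every element of $G$.

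Blow up each $s\in S$ into two adjacent points $s^-<s^+$: one obtains a compact linearly ordered metrizable space $K=(I\setminus S)\cup\{s^-,s^+:s\in S\}$ together with an order-preserving continuous surjection $\pi:K\to I$ collapsing $\{s^-,s^+\}$ to $s$. Choosing summable weights $\varepsilon_s>0$ and inserting a gap of length $\varepsilon_s$ at each $s$ realizes $K$ as a compact subset of $\R$. Each $g\in G$ lifts to a bijection $\tilde g:K\to K$ which agrees with $g$ outside $\pi^{-1}(S)$ and sends $s^\pm$ to the appropriate element of $\pi^{-1}(g(s^\pm))$, the $\pm$-label being chosen according to whether the continuity branch of $g$ abutting $s$ on the relevant side is increasing or decreasing. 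Because any continuity branch of a GIET is continuous and injective, hence monotonic, each $\tilde g$ is a locally monotonic homeomorphism of $K$, and $g\mapsto\tilde g$ is an injective homomorphism into $\mathrm{Homeo}(K)$.

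Apply Theorem~\ref{Tits} to the lifted subgroup $\tilde G\subset\mathrm{Homeo}(K)$. If $\tilde G$ contains a free group of rank $2$, then so does $G$ by injectivity of the lift. Otherwise there is a $\tilde G$-invariant probability measure $\tilde\mu$ on $K$; set $\mu=\pi_*\tilde\mu$. When $\mu$ has no atoms, $\mu(S)=0$, and since atoms of $\tilde\mu$ project to atoms of $\mu$ one also has $\tilde\mu(\pi^{-1}(S))=0$. The relation $\pi\circ\tilde g=g\circ\pi$ therefore holds $\tilde\mu$-almost everywhere, whence $g_*\mu=\mu$ for every $g\in G$: this is the atomless $G$-invariant measure of the first case of Definition~\ref{GIETmeasure}. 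When $\mu$ has an atom at some $x\in I$, $\tilde\mu$ has an atom at some $y\in\pi^{-1}(x)$; since $\tilde\mu$ is a finite $\tilde G$-invariant measure, the orbit $\tilde G\cdot y$ is finite. Projecting by $\pi$ one obtains a finite $O^-(x)$ if $y=x^-$, a finite $O^+(x)$ if $y=x^+$, and a finite $G$-orbit if $x\notin S$ (in which case every element of the orbit is a continuity point, so $O^-(x)=O^+(x)$ is finite). In every case the second alternative of Definition~\ref{GIETmeasure} is satisfied.

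The main technical obstacle is the blow-up construction itself: verifying that $K$ is compact in $\R$, that the labeling rule produces a well-defined lift $\tilde g$ which is a locally monotonic homeomorphism, and that $g\mapsto\tilde g$ respects composition. These are classical checks relying crucially on the piecewise monotonicity of GIETs; once the reduction is set up, both alternatives in the conclusion follow formally from Theorem~\ref{Tits} and the elementary fact that atoms of a finite invariant measure have finite orbit.
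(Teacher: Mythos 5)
Your proof is correct and follows essentially the same route as the paper's: blow up the countable set of discontinuities into pairs of one-sided points to get a compact $K\subset\R$ on which $G$ acts by locally monotonic homeomorphisms, apply Theorem~\ref{Tits}, and split the invariant-measure case according to whether the measure on $K$ has atoms. The only cosmetic difference is the realization of the blow-up: the paper conjugates by the distribution function of $\lambda+\sum_{c\in D}\alpha_c\delta_c$ (with $D$ the full set of discontinuities of elements of $G$) and takes $K=\overline{f(I\setminus D)}$, while you build the doubled space $(I\setminus S)\cup\{s^\pm\}$ directly and embed it in $\R$ via summable gap lengths; these produce the same space and the same lifted action.
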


In the second case, either $G$ has a left orbit $O^-(x)=\{g^-(x),g\in G\}$ or a right orbit $O^+(x)=\{g^+(x),g\in G\}$ which is finite, or there exists an invariant measure without atoms, and then its repartition function defines a semiconjugacy from $G$ to a subgroup $H$ of $GIET(I)$ which preserves the Lebesgue measure, so actually $H$ is a group of interval exchange transformations. We formalize this remark with the following statement.

\begin{corollary}
Let $I=[a,b]$ a compact interval of $\R$. Let $G$ be a finitely generated subgroup of $GIET(I)$. Then one of the three following properties holds.
\begin{enumerate}
		\item The group $G$ contains a free subgroup of rank $2$;
		\item The group $G$ has either a left orbit $O^-(x)=\{g^-(x),g\in G\}$ or a right orbit $O^+(x)=\{g^+(x),g\in G\}$ which is finite.
		\item There exist an onto continuous nondecreasing map $h : I \rightarrow [0,1]$, a subgroup $H$ of $IET([0,1])$ and a morphism $\varphi:G \rightarrow H$ such that, for any element $g \in G$,
		$$ hg=\varphi(g)h.$$
	\end{enumerate}	
\end{corollary}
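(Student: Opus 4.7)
The strategy is to apply Theorem~\ref{TitsIET} and then unpack what ``preserving a probability measure'' means in the sense of Definition~\ref{GIETmeasure}. Applying Theorem~\ref{TitsIET} to $G$, either conclusion~(1) of that theorem holds, giving conclusion~(1) of the corollary, or $G$ preserves a probability measure on $I$. By Definition~\ref{GIETmeasure}, this second alternative itself splits into two sub-cases: (a) there is a point $x\in I$ with $O^-(x)$ or $O^+(x)$ finite, which is exactly conclusion~(2); or (b) there exists a $G$-invariant probability measure $\mu$ on $I$ without atoms. It remains to extract conclusion~(3) from sub-case~(b).

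Assume then that $\mu$ is atomless and $G$-invariant. I would build the semiconjugacy $h$ as the distribution function of $\mu$:
\[
h : I \to [0,1], \qquad h(x) = \mu([a,x]).
\]
Atomlessness of $\mu$ makes $h$ continuous, and $h$ is nondecreasing with $h(a)=0$ and $h(b)=1$, hence onto. For each $g\in G$ one then defines $\varphi(g) : [0,1]\to [0,1]$ by
\[
\varphi(g)(t) = h(g(x)) \qquad \text{for any } x\in h^{-1}(t),
\]
so that $hg=\varphi(g)h$ holds by construction. Once $\varphi(g)$ is well defined, surjectivity of $h$ combined with the computation $\varphi(gg')\circ h = h\circ gg' = \varphi(g)\circ h\circ g' = \varphi(g)\circ\varphi(g')\circ h$ forces $\varphi$ to be a group homomorphism.

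The main obstacle is the well-definedness of $\varphi(g)$: if $h(x_1)=h(x_2)$ with $x_1<x_2$, one must show $h(g(x_1))=h(g(x_2))$. The hypothesis reads $\mu([x_1,x_2])=0$, and since $g$ and $g^{-1}$ both preserve $\mu$ the image $g([x_1,x_2])$ is also $\mu$-null; being a finite union of subintervals (by piecewise continuity and injectivity of $g$), it is contained in the union of ``gaps'' of $h$. The delicate point is to control the finitely many jumps of $g$ inside $[x_1,x_2]$, and the natural route is to work with a one-sided regularization of each $g\in G$ using the values $g^+(x)$ or $g^-(x)$ already featured in Definition~\ref{GIETmeasure}. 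Once $\varphi(g)$ is known to be well defined and piecewise continuous (hence a GIET of $[0,1]$), the pushforward $h_*\mu$ is the Lebesgue measure on $[0,1]$ by the standard property of distribution functions, and invariance of $\mu$ under $g$ transfers to invariance of Lebesgue measure under $\varphi(g)$. A GIET of $[0,1]$ preserving Lebesgue measure is necessarily a translation on each piece of continuity, hence belongs to $IET([0,1])$; setting $H=\varphi(G)\subset IET([0,1])$ then yields conclusion~(3).
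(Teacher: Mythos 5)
Your proposal takes essentially the same route the paper does: the paper offers only the preceding Remark as a proof of this corollary, and that Remark is precisely the unpacking of Definition~\ref{GIETmeasure} plus the observation that the distribution function of an atomless invariant measure semiconjugates $G$ onto a Lebesgue-preserving subgroup of $GIET([0,1])$, which must then consist of interval exchanges. The well-definedness subtlety you flag (that $\varphi(g)$ must be constant on fibres of $h$ even across jumps of $g$ lying inside a gap of $\mu$) is genuine and is not addressed in the paper either, so you are at the same level of rigor; do note, though, that your closing claim that a Lebesgue-preserving GIET is ``necessarily a translation on each piece of continuity'' overlooks the possibility of orientation-reversing pieces (flips), a point the paper's Remark also glosses over.
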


\begin{proof}[Proof of Theorem \ref{TitsIET}]
	Let us denote by $D$ the union of the discontinuity points of all the elements of $G$. The set $D$ is countable, and $G$ acts continuously on $I\backslash D$. We are going to blow up all the points of $D$ in non trivial intervals, so that $I\backslash D$ becomes a closed set on which $G$ acts continuously. To do it rigourously, we take a continuous increasing function $f:I=[a,b]\to J=[c,d]$ whose set of discontinuity points is exactly $D$. Such a function exists because $D$ is countable (take for exemple the repartition function of $\nu=\lambda+\sum_{c\in D} \alpha_c \delta_c$ where $\lambda$ is the Lebesgue measure and $(\alpha_c)_{c\in D}$ a summable family of positive real numbers). Then $f$ is an increasing homeomorphism of $I\backslash D$ onto $Y=f(I\backslash D)$. Note that the boundary of $Y$ are the points $f(c^-),f(c^+)$ for $c$ in $D$, so in particular they are either isolated from the left or from the right. Then, for any $g$ in $G$, $fgf^{-1}$ is a locally monotonic homeomorphism of $Y$ which can be continuously extended to a homeomomorphism $\hat{g}$ of $K=\overline{Y}$.\\
	
	Now we apply Theorem \ref{Tits} to the action on $K$ of the group $\hat{G}=\{\hat{g},g\in G\}$. Three cases can occur.
	
	\begin{enumerate}
		\item If $\hat{G}$ contains a non trivial free group, so does $G$.
		\item If $\hat{G}$ preserves a probability measure $\mu$ on $K$ without atoms, then $\mu$ is actually a probabilty measure on $Y$, and $G$ preserves the probabilty measure $\nu=f^{-1}_*\mu$.
		\item If $\hat{G}$ preserves a probability measure $\mu$ on $K$ with at least an atom at a point $p$, then $\{x\in K| \mu(\{x\})=\mu(\{p\})\}$ is a nonempty finite subset which is invariant under the action of $\hat{G}$, so the orbit of $x$ under $\hat{G}$ is finite. If $x$ is of the form $f(c^-)$ for some $c$ in $D$, then $G$ has a finite left orbit, if it is of the form $f(c^+)$ then  $G$ has a finite right orbit, and if not then $G$ has a finite (standard) orbit. In any case $G$ preserves a probability measure in the sense of Definition \ref{GIETmeasure}.
	\end{enumerate}
	
\end{proof}

\begin{remark}\label{lexico}
If $I=[a,b]$,  we can considered $\tilde{I}$ the signed interval $\tilde{I}=I\times \{-1,1\}$ minus the two extreme values $(a,-1)$ and $(b,1)$. It can be endowed with the topology induced by the lexicographic order on $\tilde{I}$. Then any element $f\in GIET(I)$ acts continuously on $\tilde{I}$ by setting $\tilde{f}(x,1)=(f(x^+),1)$ and $\tilde{f}(x,-1)=(f(x^-),-1)$. Then $GIET(I)$  may be identified as a subgroup of $\mbox{Homeo}(\tilde{I})$, and then the notion of invariant measure for subgroups of $GIET(I)$ corresponds to the classical notion for subgroups of $\mbox{Homeo}(\tilde{I})$. 
\end{remark}

\subsubsection*{Groups action on linearly ordered set}
Theorem \ref{Tits} can be expressed in a more abstract setting of group actions on linearly ordered sets. Let $E$ be a set endowed with a linear order $\leq$ (i.e. every two elements of $E$ are comparable) and the corresponding order topology. We will assume that $E$ is complete as a lattice, which means that every nonempty subset of $E$ has a supremum and an infimum, and is equivalent to the fact that $E$ is compact as a topological space (see \cite{B}, Chapitre 3, Theorem 9 for a proof). The notion of local monotonic transformation of $E$ can be defined as in Definition \ref{locmon}. Then we can state a Tits alternative.

\begin{theorem} \label{TitsLinOrd}
Let $(E,\leq)$ a linearly ordered set which is complete as a lattice. Let $G$ be a group of locally monotonic homeomorphisms of $E$. Then:
	\begin{enumerate}
		\item either the group $G$ contains a nonabelian free group,
		\item or the action of $G$ on $E$ preserves a probability measure.
	\end{enumerate}
\end{theorem}
Theorem \ref{Tits} naturally enters in this context, but also Theorem \ref{TitsIET} thanks to Remark \ref{lexico}.

Though $E$ and so $G$ may have a wild structure (e.g. every successor ordinal is a complete lattice), it is possible to significantly simplify the problem in order to come back to the context of Theorem \ref{Tits}. To do this, we will use the following general lemma.
\begin{lemma}\label{reduction}
	Let $X$ be a topological compact space and let $G$ be a subgroup of $\mathrm{Homeo}(X)$. If every finitely generated subgroup of $G$ preserves a probability measure on $X$, then so does $G$.
\end{lemma}

\begin{proof}
	Let $\Pi$ be the set of probability measures on $X$ endowed with the weak-$*$ topology. For any $g$ in $G$, let $\Pi(g)=\{\mu\in \Pi| g_*\mu=\mu\}$. This is a closed subset of $\Pi$. For any $g_1,\ldots,g_n$ in $G$, by assumption on $H=\langle g_1,\ldots, g_n\rangle$, we have $$\bigcap_{k=1}^n \Pi(g_k)\not=\emptyset.$$
	As the space $\Pi$ is compact, we deduce that $$\bigcap_{g\in G} \Pi(g)\not=\emptyset.$$
\end{proof}
Let us see how Theorem \ref{TitsLinOrd} is implied by Theorem \ref{Tits}.
\begin{proof}
The assumptions imply that $E$ is compact as a topological space. Let $G\subset \mathrm{Homeo}(E)$ as in the statement. Thanks to Lemma \ref{reduction}, we can assume that $G$ is finitely generated, and in particular, countable. 

Then, by restricting the action of $G$ on a closed orbit of some point, we can assume that $E$ is separable. 

Next we call gap (or jump) of $E$ a pair of elements $J=\{x,y\}$ which are consecutive for the order, i.e. $x\not=y$ and there does not exists a point of $E$ between $x$ and $y$. For $g$ in $G$ fixed, using that $g$ and $g^{-1}$ are locally mononotonic, every point of $E$ admits a neighborhood $V$ such that for any gap $J\subset V$, its image $g(J)$ is still a gap. By compactness we deduce that the image by $g$ of every gap is a gap except for a finite number of exception. Since $G$ is countable, we deduce that every gap $J$ except a countable number of exceptions satisfies that its image by every element of $G$ is still a gap. By identifying the two points of each of these gaps, we obtain a new ordered set where $G$ still acts, satisfying the same assumptions, and having only a countable number of gaps. So we can assume that $E$ has a countable number of gaps.

Now, by \cite{F}, a linearly ordered set which is separable and has a countable number of gaps is isomorphic to a subset of $\R$. So we can assume $E\subset \R$, and since $E$ is compact, Theorem \ref{Tits} applies.
\end{proof}

\subsection{The Ping Pong argument}
The proof of the original Tits alternative, as well as Margulis version, rely on the well-known ping-pong lemma. We recall a version of this lemma (see \cite{dlH} p.25 for a proof).

\begin{lemma}[Ping-pong lemma]
	Let $G$ be a group acting on a set $X$ and let $a_1,a_2$ be elements of $G$. Suppose there exist pairwise disjoint subsets $A_1$, $B_1$, $A_2$ and $B_2$ such that $a_i(X\backslash A_i)\subset B_i$ for $i=1,2$. Then the elements $a_1$ and $a_2$ generate a free group on two generators.
\end{lemma}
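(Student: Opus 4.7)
The plan is to show that every non-trivial reduced word $w$ in the symbols $a_1, a_2, a_1^{-1}, a_2^{-1}$ acts non-trivially on $X$; this is equivalent to $a_1, a_2$ generating a free group of rank $2$. First I would upgrade the hypothesis to all non-zero powers: from $a_i(X \setminus A_i) \subset B_i$ together with the pairwise disjointness (which gives $B_i \subset X \setminus A_i$), an easy induction yields $a_i^n(X \setminus A_i) \subset B_i$ for every $n \geq 1$; taking complements in the original inclusion gives $a_i^{-1}(X \setminus B_i) \subset A_i$, and iterating similarly gives $a_i^{-n}(X \setminus B_i) \subset A_i$ for every $n \geq 1$. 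In particular every non-zero power of $a_i$ sends the complement of $A_i \cup B_i$ into $A_i \cup B_i$.

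Next I would carry out the ping-pong on an arbitrary non-trivial reduced word $w = g_k g_{k-1} \cdots g_1$, each $g_j = a_{i(j)}^{m_j}$ with $m_j \neq 0$ and $i(j) \neq i(j+1)$. I would first reduce to the case $i(1)=i(k)=1$ by replacing $w$ with the conjugate $a_1^N w a_1^{-N}$ for $N$ large: any possible cancellation at the two endpoints only combines or introduces $a_1$-syllables without collapsing the word, so the conjugate is reduced, has both outer syllables involving $a_1$, and is non-trivial iff $w$ is. Now pick any point $x \in A_2 \cup B_2$; by disjointness $x \notin A_1 \cup B_1$. An induction on $j$ then shows that $g_j g_{j-1} \cdots g_1(x) \in A_{i(j)} \cup B_{i(j)}$: if $y := g_{j-1} \cdots g_1(x)$ lies in $A_{i(j-1)} \cup B_{i(j-1)}$, then by pairwise disjointness $y \notin A_{i(j)}$ and $y \notin B_{i(j)}$, so the iterated hypothesis applied to $g_j$ (using the positive-power inclusion if $m_j>0$ and the negative-power inclusion if $m_j<0$) places $g_j(y)$ in $B_{i(j)}$ or in $A_{i(j)}$ respectively. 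Taking $j=k$ gives $w(x) \in A_1 \cup B_1$, which is disjoint from $A_2 \cup B_2 \ni x$; hence $w(x) \neq x$ and $w$ is not trivial in $G$.

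The main subtlety is the asymmetry of the hypothesis between positive and negative powers (one lands in $B_i$, the other in $A_i$), but this dissolves as soon as one tracks membership in the union $A_{i(j)} \cup B_{i(j)}$, which is all the induction needs at the next step because $A_{i(j)} \cup B_{i(j)}$ is disjoint from $A_{i(j+1)} \cup B_{i(j+1)}$. The preliminary reduction by conjugation to equalize the outer generators is the standard cyclic reduction in a free group and is harmless. One tacitly assumes $A_2 \cup B_2$ is nonempty (otherwise the hypothesis places no constraint on $a_2$ and the statement is vacuous for lack of any test point).
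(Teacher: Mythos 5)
Your proof is correct; it is the standard ping-pong argument, and the paper gives no proof of its own for this lemma, referring instead to de la Harpe's book where essentially this argument (upgrade the hypothesis to all nonzero powers, cyclically reduce by conjugation so the word begins and ends with an $a_1$-syllable, then track a point of $A_2 \cup B_2$ bouncing into $A_1 \cup B_1$) appears. The nonemptiness caveat you flag at the end is real but is automatic once $X \neq \emptyset$: for any $x_0 \in X$, either $x_0 \in A_i$ or $a_i(x_0) \in B_i$, so each $A_i \cup B_i$ is nonempty, and only the degenerate case $X = \emptyset$ — where the lemma as stated is in fact false — is excluded.
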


The key to obtain the elements $a_i$ and the sets $A_i$, $B_i$ in the context of Theorem \ref{Tits} is to use that there exist big contractions in the group, which send a large part of the space into a small part. Let us give some details. If $A\subset K$ and $\epsilon >0$ we denote $$A^{\epsilon}= \left\{ x \in K \ | \ d(x,A) < \epsilon \right\}.$$

Most of the article is dedicated to prove the following proposition.

\begin{proposition} \label{globalcontrcorollary}
	Let $K$ be a compact subset of $\R$. Let $G$ be a subgroup of $\mathrm{Homeo}(K)$ whose elements are locally monotonic on $K$, and whose action on $K$ does not have an invariant probability measure. Then there exists an integer $p$ such that the following statement holds: for any $\ep>0$, there exist finite subsets $A$ and $B$ of $K$ of cardinal $p$ and an element $g$ of $G$ such that $g(K\backslash A^\ep)\subset B^\ep$.
\end{proposition}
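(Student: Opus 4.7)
The plan is to produce the desired contractions via a random walk argument in the spirit of the Deroin-Kleptsyn-Navas approach to Margulis' theorem, adapted to the Cantor setting. Fix a finitely supported symmetric probability measure $\mu$ on $G$ whose support generates $G$, let $(X_i)_{i\ge 1}$ be i.i.d. random variables with law $\mu$, and set $g_n = X_n\circ\cdots\circ X_1$. By compactness of the space of probability measures on $K$, there is a $\mu$-stationary probability measure $\nu$; since by assumption no probability measure is $G$-invariant, $\nu$ is not $G$-invariant, and a standard martingale argument on the pushforwards $(g_n)_*\nu$ shows that these pushforwards almost surely concentrate on a random finite subset of $K$.

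The technical heart of the proof is to promote this measure-theoretic concentration to a uniform topological statement with a cardinality bound independent of $n$: there should exist an integer $p$, depending only on the local monotonic data of a generating set of $G$, such that almost surely there are random finite sets $A,B\subset K$ of cardinality at most $p$, a sequence $\ep_k\downarrow 0$, and indices $n_k\to\infty$ (all depending on the trajectory) satisfying
\[
 g_{n_k}\bigl(K\setminus A^{\ep_k}\bigr)\subset B^{\ep_k}.
\]
Heuristically, $p$ should be controlled by twice the number of monotonic pieces of some fixed generator: on each such piece, local monotonicity combined with contraction should concentrate most of the piece into a tiny neighborhood of one of its two endpoints, while a small neighborhood of the other endpoint absorbs a repelling region.

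The main obstacle is precisely to keep the cardinality $p$ finite. The number of monotonic pieces of $g_n$ grows with $n$, so the naive bound on $|A|$ and $|B|$ blows up. Overcoming this requires showing that contraction collapses all but boundedly many pieces into sets of arbitrarily small diameter, which can then be absorbed into the $\ep$-neighborhoods of the remaining ``essential'' endpoints; the control of how monotonic pieces combine under composition, together with sharp probabilistic estimates of the exponential contraction of $(g_n)_*\nu$ towards point masses, should yield the bound. Once this uniform contraction is established, the proposition is immediate: given $\ep>0$, pick a trajectory in the full-measure set, choose $k$ with $\ep_k<\ep$, set $g=g_{n_k}$, and take $A$ and $B$ to be the corresponding random finite sets padded out to exactly $p$ elements.
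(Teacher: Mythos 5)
Your overall framework matches the paper's: use a random walk $g_n$ generated by a finitely supported measure on $G$, exploit a stationary measure $\nu$ and the non-invariance hypothesis, and leverage local monotonicity to promote probabilistic contraction to topological contraction. You have also correctly located the central difficulty: the number of monotonic pieces of $g_n$ grows with $n$, so a naive bound on $|A|,|B|$ diverges. However, the proposal has two genuine gaps where the real work lies, and on these points it does not actually give a mechanism.

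First, the claim that ``a standard martingale argument on the pushforwards $(g_n)_*\nu$ shows that these pushforwards almost surely concentrate on a random finite subset of $K$'' overstates what Furstenberg's martingale argument gives: it only yields almost-sure weak convergence of $(g_n)_*\nu$ to a random measure $\nu_\omega$, with no control whatsoever on the support of $\nu_\omega$, let alone a cardinality bound independent of $n$. Establishing that the limit concentrates on a uniformly bounded finite set is precisely the content of the hardest part of the paper (Propositions \ref{localcontr}, \ref{semilocalcontr} and \ref{globalcontr}); it uses a positivity-of-entropy argument à la Malicet (Lemma \ref{positiveentropy}) together with the Birkhoff ergodic theorem to get exponential local contraction, and then a separate dichotomy (Lemma \ref{synchro}) and a repulsor count bounded by $\mathrm{diam}(K)/\delta$ (Lemma \ref{repulsors}) to bound the cardinality. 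None of this is ``standard.'' In particular your heuristic that $p$ is ``twice the number of monotonic pieces of some fixed generator'' is not the right invariant: the actual bound depends on the degree of proximality, the number of repulsors, and the break-point accumulation bound.

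Second, and more critically, you acknowledge that the growth of break points must be controlled and say that this ``should yield the bound,'' but you give no mechanism for it. The paper's mechanism is the key new idea: break points of $f_\omega^n = f_{\omega_{n-1}}\circ\cdots\circ f_{\omega_0}$ are contained in $\bigcup_{k} (f_\omega^k)^{-1}(\Delta)$ where $\Delta$ is the finite set of break points of the generators, and $(f_\omega^k)^{-1} = f_{\omega_0}^{-1}\circ\cdots\circ f_{\omega_{k-1}}^{-1}$ is an \emph{inverse} random walk. The paper proves (Theorem \ref{inverse}) that under the fast local contraction property, such inverse walks have a uniformly bounded number of cluster values, via an $m$-proximality argument and a Markov chain summability lemma (Proposition \ref{Markovsum}). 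This inverse-random-walk accumulation result is what bounds the number of surviving break pieces, and it has no analogue in your proposal. Without it, the plan does not close the gap you correctly identified; as written it is a roadmap pointing at the difficulty, not a proof.
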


In what follows, we explain how to deduce Theorem \ref{Tits} from Proposition \ref{globalcontrcorollary}. To achieve this, we will combine the above property with the following general fact. 

\begin{proposition} \label{displacement}
	Let $G$ be a group acting on a set $X$ with no finite orbit. Then, for any finite subsets $A$, $B$ of $X$, there exists an element $g$ of $G$ such that 
	$$g(A) \cap B= \emptyset.$$ 
\end{proposition}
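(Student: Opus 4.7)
The plan is to show that the set of ``bad'' group elements
\[
\mathcal{B} \;=\; \{g\in G : g(A)\cap B\neq\emptyset\}
\]
is a proper subset of $G$. Writing $A=\{a_1,\dots,a_p\}$, the condition $g(A)\cap B\neq\emptyset$ means $g(a_i)=b$ for some $i\in\{1,\dots,p\}$ and $b\in B$, so
\[
\mathcal{B} \;=\; \bigcup_{i=1}^{p}\bigcup_{b\in B}\{g\in G : g(a_i)=b\}.
\]
Each inner set is either empty or a single left coset of the point stabilizer $\mathrm{Stab}_G(a_i)$, so $\mathcal{B}$ is a union of at most $p\,|B|$ left cosets of the $p$ subgroups $\mathrm{Stab}_G(a_1),\dots,\mathrm{Stab}_G(a_p)$.

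Next I would invoke the no-finite-orbit hypothesis: since the orbit $G\cdot a_i$ is infinite for every $i$, the index $[G:\mathrm{Stab}_G(a_i)]$ is infinite. Hence $\mathcal{B}$ is a finite union of left cosets of subgroups of infinite index in $G$.

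The conclusion then follows from B.~H. Neumann's classical lemma: a group is never the union of finitely many left cosets of subgroups all of infinite index. Any $g$ in $G\setminus\mathcal{B}$ therefore satisfies $g(A)\cap B=\emptyset$, as required.

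The only nontrivial ingredient is Neumann's lemma itself, which admits a short induction on the number of cosets in the cover (one picks a coset of, say, the ``largest'' subgroup that does not appear among the cover, and uses it to reduce the number of subgroups involved). Everything else is bookkeeping: the identification of the bad set as a union of cosets of point stabilizers, and the observation that point stabilizers of infinite orbits have infinite index.
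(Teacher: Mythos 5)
Your proof is correct, but it takes a genuinely different route from the paper. The paper argues by induction on $|A|$: removing a point $a$ from $A$ and applying the inductive hypothesis repeatedly, it manufactures infinitely many $g_k\in G$ with pairwise disjoint translates $g_k(A\setminus\{a\})$, and then splits into two cases according to whether the set $\{g_k(a)\}$ is infinite or finite, using one further application of the no-finite-orbit hypothesis in the latter case. You instead recast the statement as a coset-covering problem: the bad set $\mathcal B=\{g:g(A)\cap B\neq\emptyset\}$ is a union of at most $|A|\cdot|B|$ left cosets of the point stabilizers $\mathrm{Stab}_G(a_i)$; each of these has infinite index by the orbit-stabilizer theorem and the no-finite-orbit hypothesis, so B.~H.~Neumann's covering theorem (a group is never a finite union of cosets of subgroups all of infinite index) shows $\mathcal B\neq G$. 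Both arguments are sound. The paper's is elementary and self-contained; yours is shorter and more structural once Neumann's lemma is granted, and it makes explicit that the only group theory the proposition really uses is this covering principle. It is worth noting, though, that the nontrivial induction has merely been relocated: the paper's induction on $|A|$ is replaced by the induction hidden inside the proof of Neumann's lemma, so the total work is comparable.
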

This result is quite elementary. We give a proof below, but we also have been communicated by Nicolàs Matte Bon that it follows from a general lemma of group theory (see Lemma 4.1 of \cite{Neu}).
\begin{proof}
	We prove the statement by induction on $p=|A|$. The case $p=1$ is immediate by using that the orbits of the group action are infinite. Let $p\geq 1$ and let us assume that the property is true for every subset $A$ of cardinal $p$ (and every finite subset $B$). Let $A$ be a set of cardinal $p+1$, and $B$ be a finite set. Let $a\in A$ and $A'=A\backslash \{a\}$, of cardinal $p$. By using the induction assumption, we define successively for $k\in\N$ an element $g_k\in G$ such that $g_k(A')$ is disjoint from $\bigcup_{j<k}g_j(A')$. Thus the sets $g_k(A')$ are pairwise disjoint. Two cases can occur:
	\begin{enumerate}
		\item The set $\{g_k(a),k\in\N\}$ is infinite: so we can find an infinite set of integers $E$ such that $g_k(a)\not\in B$ for every $k\in E$. Since the sets $g_k(A')$ are pairwise disjoint, we can find $k\in E$ such that $g_k(A')\cap B=\emptyset$. Then $g_k(A)\cap B=\emptyset$.
		\item The set $\{g_k(a),k\in\N\}$ is finite: so we can find an infinite set of integers $E$ and an element $b\in X$ such that $g_k(a)=b$ for every $k\in E$. Since the orbit of $b$ is infinite, we can find $h\in G$ such that $h(b)\not\in B$. Since the sets $g_k(A')$ are pairwise disjoint, we can find $k\in E$ such that $g_k(A')\cap h^{-1}(B)=\emptyset$. Then $h\circ g_k(A)\cap B=\emptyset$.
	\end{enumerate}
In any case we have found $g\in G$ such that $g(A)\cap B=\emptyset$, so the induction is done.	
\end{proof}

We now explain how to use the two above propositions to obtain Theorem \ref{Tits}. First, notice that in the context of the theorem, if $G$ does not have any invariant probability measure, we can apply Proposition \ref{displacement} because any finite orbit $F$ for the group action gives an invariant probability measure $\mu=\frac{1}{|F|}\sum_{x\in F}\delta_x$. So from the two above propositions, we obtain the following properties.

\begin{enumerate}
	\item (contractions) There exists $p$ such that for any $\ep>0$, there exist finite sets $A$, $B$ of cardinal $p$ and $g\in G$ such that $g(K\backslash A^\ep)\subset B^\ep$.
	\item (displacement) For any finite sets $A$, $B$ of $K$ there exists $g\in G$ such that $g(A)\cap B\not=\emptyset$.
\end{enumerate}

By the following proposition, these two properties are enough to satisfy the hypothesis of the Ping-Pong Lemma. Hence the group $G$ contains free groups of rank $2$.

\begin{proposition}
Let $G$ be a group acting continuously on a metric compact space $(K,d)$ so that the two properties (contractions) and (displacement) are satisfied. Then $G$ contains a free subgroup of rank $2$. 
\end{proposition}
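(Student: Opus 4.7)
The plan is to apply the Ping-Pong Lemma to a suitable pair of elements, obtained by combining a contraction from (contractions) with one of its conjugates under a displacement supplied by (displacement). A Hausdorff compactness argument makes the construction coherent as the scale tends to zero.

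First, for every integer $n\geq 1$, (contractions) yields $g_n\in G$ and finite sets $A_n,B_n\subset K$ of cardinality $p$ with $g_n(K\setminus A_n^{1/n})\subset B_n^{1/n}$. The space of subsets of the compact metric space $K$ of cardinality at most $p$ is itself compact in the Hausdorff metric, so after extracting a subsequence (still denoted $(A_n,B_n,g_n)$) I may assume $A_n\to A_\infty$ and $B_n\to B_\infty$ for some finite sets $A_\infty,B_\infty\subset K$.

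Next I apply (displacement) to the limit configuration: first to $(B_\infty,A_\infty)$ to obtain $h\in G$ with $h(B_\infty)\cap A_\infty=\emptyset$, and then to the pair $(A_\infty\cup h(B_\infty),\,A_\infty\cup h(B_\infty))$ to obtain $\tilde h\in G$ with $\tilde h\bigl(A_\infty\cup h(B_\infty)\bigr)\cap\bigl(A_\infty\cup h(B_\infty)\bigr)=\emptyset$. A short verification shows that the four finite sets $A_\infty$, $h(B_\infty)$, $\tilde h(A_\infty)$, $\tilde h h(B_\infty)$ are pairwise disjoint. Setting $a_1=hg_n$ and $a_2=\tilde h\, h g_n\tilde h^{-1}$ for $n$ large, a direct computation gives
$$ a_1\bigl(K\setminus A_n^{1/n}\bigr)\subset h\bigl(B_n^{1/n}\bigr),\qquad a_2\bigl(K\setminus\tilde h(A_n^{1/n})\bigr)\subset\tilde h h\bigl(B_n^{1/n}\bigr). $$
By uniform continuity of the now fixed homeomorphisms $h$, $\tilde h$, $\tilde h h$ on $K$ and the Hausdorff convergences $A_n\to A_\infty$, $B_n\to B_\infty$, the four sets $A_n^{1/n}$, $h(B_n^{1/n})$, $\tilde h(A_n^{1/n})$, $\tilde h h(B_n^{1/n})$ converge in Hausdorff respectively to $A_\infty$, $h(B_\infty)$, $\tilde h(A_\infty)$, $\tilde h h(B_\infty)$, and are therefore pairwise disjoint for $n$ sufficiently large. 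The Ping-Pong Lemma applied to $(a_1,a_2)$ with these four sets then produces a free subgroup of rank $2$ in $G$.

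The main obstacle, resolved precisely by this passage to a Hausdorff limit, is that the displacement elements needed to separate the attracting and repelling sets of the contraction depend a priori on the finite sets $A,B$ returned by (contractions), which themselves depend on the scale $\epsilon$. Without fixing $h$ and $\tilde h$ via a common limit configuration, one would have no uniform control on their moduli of continuity as $\epsilon\to 0$, and the $\epsilon$-thickenings of the four attracting/repelling sets might fail to remain disjoint.
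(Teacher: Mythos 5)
Your proof is correct and follows essentially the same approach as the paper's: extract a Hausdorff limit configuration $(A_\infty,B_\infty)$ from the sequence of contractions, apply (displacement) twice to make the four attracting/repelling sets pairwise disjoint, and run the Ping--Pong Lemma at a small enough scale, using conjugation by the displacement elements to produce the two ping-pong players. The paper packages the same idea slightly differently by first abstracting a ``contraction property'' satisfied by the fixed pair $(A,B)$ and proving its invariance under $(A,B)\mapsto(u(A),v(B))$, but this is exactly the mechanism your formulas $a_1=hg_n$ and $a_2=\tilde h\,h g_n\,\tilde h^{-1}$ implement explicitly.
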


\begin{proof}	
	Let us check first that in the first property, we can find $A$ and $B$ which do not depend on $\ep$. It means that for any respective neighborhoods $V_1$, $V_2$ of $A$, $B$, there exists $g\in G$ such that $g(K \setminus V_1)\subset V_2$. We say in this case that $(A,B)$ satisifies the contraction property.\\ \\
	To prove the existence of these sets, we apply the first property to a sequence $\ep_n$ which tends to $0$ we get a sequence $(g_n)$ in $G$ and finite sets $A_n$, $B_n$ of cardinal $p$ such that $g_n(K\backslash A_n^{\ep_n})\subset B_n^{\ep_n}$. Let $A_n=\{a_1^{(n)},\ldots,a_p^{(n)}\}$ and $B_n=\{b_1^{(n)},\ldots,b_p^{(n)}\}$. Taking subsequences if necessary, we can suppose that all the sequences $(a_1^{(n)}),\ldots,(a_p^{(n)}),(b_1^{(n)}),\ldots,(b_p^{(n)})$ converge with respective limits $a_1,\ldots,a_p,b_1,\ldots b_p$. Denoting $A=\{a_1,\ldots,a_p\}$ and $B=\{b_1,\ldots,b_p\}$, for any $\ep>0$, there exists $n$ such that $A_n^{\ep_n}\subset A^\ep$ and $B_n^{\ep_n}\subset B^\ep$ and then $g=g_n$ satisfies $g(K \setminus A^\ep)\subset B^\ep$.\\
	
	A second ingredient is the following remark: if $(A,B)$ satisfies the contraction property, and if $u,v$ are any elements of $G$, then $(u(A),v(B))$ also  satisfies the contraction property. Indeed,	
	if $V_1$ and $V_2$ are respective neighborhoods of $u(A)$ and $v(B)$ then  $u^{-1}(V_1)$ and $v^{-1}(V_2)$ are respective neighborhood of $A$ and $B$, so there exists $g\in G$ such that $g(K\backslash u^{-1}(V_1))\subset  v^{-1}(V_2)$  and so $h(K\backslash V_1)\subset V_2$ for $h=v\circ g\circ u^{-1}$.\\
	
	Now, take $(A,B)$ satisfying the contraction property. By the displacement property  there exists $u\in G$ such that $B\cap u(A)=\emptyset$. Then the pair $(A_1,B_1)=(u(A),B)$ also satisfies the contraction property and $A_1\cap B_1=\emptyset$. Using the displacement property a second time, we obtain $v\in G$ such that $(A_1\cup B_1)\cap v(A_1\cup B_1)=\emptyset$. The pair $(A_2,B_2)=(v(A_1),v(B_1))$ satisfies the contraction property and the sets $A_1$, $B_1$, $A_2$ and $B_2$ are pairwise disjoint.\\ 
	
	Finally, let $\ep>0$ be small enough so that the neighborhoods $A_1^\ep$,  $B_1^\ep$, $A_2^\ep$ and $B_2^\ep$ are still pairwise disjoint. Then the contraction property gives $a_1$ and $a_2$ such that $a_i(K\backslash A_i^\ep)\subset B_i^\ep$ for $i=1,2$, and the Ping-Pong Lemma applies: $a_1$ and $a_2$ generate a free group of rank $2$.
\end{proof}

The rest of the article is mostly devoted to the proof of Proposition \ref{globalcontrcorollary}. We will use random walks on $G$ to prove this proposition. With this probabilistic approach, we actually obtain a stronger result (see Proposition \ref{globalcontr}). We do not know if it is possible to obtain Proposition \ref{globalcontrcorollary} with deterministic methods.

\subsection{Outline of the proof}

Given a finitely generated group $G$ of locally monotonic homeomorphisms such that $G$ does not preserve any measure on $K$, we want to prove that $G$ contains ``big'' contractions in the sense of Proposition \ref{globalcontrcorollary}. We will find these contractions by probabilistic methods. We consider a random walk $g_n=f_{n-1}\circ\cdots\circ f_0$ on $G$ and we prove that, asymptotically, elements of the random walk have a high probability to contract a large part of $K$. The proof is divided into several steps.
\begin{enumerate}
\item We first obtain a local contraction phenomenon: In \cite{Mal}, it is proven in the case of a random walk on a group of homeomorphisms of the circle that, if there is no invariant measure then every point has almost surely a neighborhood which is contracted exponentially fast. By following closely the proof we will see that we can adapt this proof with minor modifications and that the phenomenon still holds here. The main point is that a local study allows to avoid the problematic points of $K$ where the monotonicity of a generator breaks.\\ \\
\item The globalization of the contraction phenomenon is harder in the case of a Cantor set than in the case of the circle, because of these ``break points'' (notion which will be defined properly). In the case of the circle, if an element $g$ brings two points $x$ and $y$ close to each other then it necessarily contracts a whole arc between $x$ and $y$. In our case it is not necessarily true that the interval $[x,y]$ is contracted, unless there is no break point between $x$ and $y$. Moreover, if $g$ is a large product of generators, then $g$ has a priori a lot of break points and its behavior might be wild. Nevertheless, in our case, we manage to analyse the break points of $g_n$: they are among the points $f_0^{-1}\cdots f_k^{-1}(c)$ where $c$ is a break point of a generator. It naturally leads to the study of $f_0^{-1}\cdots f_k^{-1}$: this defines an inverse random walk, and as such, one may expect some properties of convergence. And, indeed, using the local contraction property, we manage to prove that theses points $f_0^{-1}\cdots f_k^{-1}(c)$, and so the break points of $g_n$, accumulate on a finite set.\\ \\
\item From the two above steps, we obtain that away from a finite set, there is only a finite number of break points of $g_n$ to handle. It allows to prove that, as expected, almost surely, when $n$ is large, $g_n$ contracts a neighborhood of every point of $K$ except a finite number of exceptions. This allows us to deduce the existence of contractions in the sense of Proposition \ref{globalcontrcorollary}.
\end{enumerate}

\subsection*{Acknowledgement}

Both authors were supported by the ANR project Gromeov ANR-19-CE40-0007. 
We would like to thank Michele Triestino for his numerous comments on a previous version of the article.

\section{Preliminaries}\label{pre}

In this section we introduce notation and tools which are needed to implement the proof. 

Thanks to Lemma it is enough to prove Proposition \ref{globalcontrcorollary} for groups which are finitely generated. In all the sequel:\\
\begin{itemize}
	\item $K$ is a compact subset of $\R$;
	\item $G$ is a finitely subgroup of $\mbox{Homeo}(K)$ whose elements are locally monotonic;
	\item $S$ is a finite set of generators of $G$ as a group (it does not need to generate $G$ as a semigroup for our purpose, though it is obviously the case if we choose $S$ symmetric).
\end{itemize}

\subsection{Local monotonicity and break points}
Let us introduce some definitions and properties about local monotonicity.

\begin{definition}
	Let $g \in \mathrm{Homeo}(K)$. We say that a pair $\{a,b\}\subset K$ is a \emph{break pair} of $g$  if $a$ and $b$ are the two endpoints of a connected component of $\mathbb{R} \setminus K$ while $g(a)$ and $g(b)$ are not. We call \emph{break point} of $g$ any point of a break pair of $g$, and we denote by $\mathrm{break}(g)$ the set of break points of $g$.
\end{definition}

\begin{remark}
	If $G$ is a group of generalized interval exchange transformations, that we semiconjugate to a group $H$ of homeomorphisms of a Cantor set as  we did in the proof of Theorem \ref{TitsIET}, then break pairs of elements of $H$ corrrespond to discontinuities of elements of $G$. Conversely, if $G$ is a group of homeomorphisms of a compact set $K\subset \R$, in the quotient space obtained by collapsing the gaps of $K$, break pairs become discontinuity points. 
\end{remark}

The following fact follows easily from the compactness of $K$.
\begin{proposition}\label{finite}
Any locally monotonic $g\in \mathrm{Homeo}(K)$ has a finite number of break points.
\end{proposition}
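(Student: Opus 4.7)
The plan is to argue by contradiction. Suppose $g$ has infinitely many break points. Each break pair $\{a,b\}$ with $a<b$ corresponds bijectively to the gap $(a,b)$, which is a connected component of $\R\setminus K$. So infinitely many break pairs give infinitely many pairwise disjoint gaps inside a bounded set, which forces their lengths to tend to $0$: for any $\epsilon>0$, only finitely many gaps have length $\geq\epsilon$. Extracting a subsequence, I can then find break pairs $\{a_n,b_n\}$, $a_n<b_n$, with $a_n\to x$ for some $x\in K$ (by compactness), and consequently $b_n\to x$ as well (because $b_n-a_n\to 0$).

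Now I use local monotonicity at $x$: there is an open interval $I\ni x$ such that $g|_{I\cap K}$ is monotonic, say increasing (the decreasing case is symmetric). For $n$ large enough, both $a_n$ and $b_n$ lie in $I\cap K$, so $g(a_n)<g(b_n)$. Since $\{a_n,b_n\}$ is a break pair, the open interval $(g(a_n),g(b_n))$ cannot be a connected component of $\R\setminus K$, so it must contain some point $c_n\in K$.

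The contradiction then comes from examining $g^{-1}(c_n)$. On one hand, $c_n\to g(x)$ by continuity of $g$ and $a_n,b_n\to x$, so $g^{-1}(c_n)\to x$ by continuity of $g^{-1}$; since $I$ is open, this forces $g^{-1}(c_n)\in I$ for all large $n$, hence $g^{-1}(c_n)\in I\cap K$. On the other hand, applying monotonicity of $g$ on $I\cap K$ to the inequality $g(a_n)<c_n<g(b_n)$ yields $a_n<g^{-1}(c_n)<b_n$, which is impossible since $(a_n,b_n)\cap K=\emptyset$.

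The argument is quite short and the main delicate point is simply the first step — ensuring that one can extract a sequence of break pairs whose \emph{both} endpoints converge to a common $x$, which rests on the standard observation that pairwise disjoint open intervals inside a bounded set have lengths tending to zero. Once that reduction is made, local monotonicity together with continuity of $g^{-1}$ does the rest.
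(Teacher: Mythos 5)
Your proof is correct. The paper gives no proof for this proposition, only noting that it ``follows easily from the compacity of $K$''; your argument is exactly a compactness argument, and the extraction of a subsequence of break pairs whose two endpoints converge to a common limit $x$, followed by the use of local monotonicity at $x$ and continuity of $g^{-1}$ to derive a contradiction, is the natural way to make the hint precise. In particular you correctly handle the one delicate point, namely that the point $g^{-1}(c_n)$ produced from the break-pair condition need not a priori lie in the interval $I$ of monotonicity: you obtain this only asymptotically, via $c_n\to g(x)$ and continuity of $g^{-1}$, which is the right fix.
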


Let also state another elementary proposition wich allows to define another notion useful for our purpose.

For any two real numbers $a$ and $b$, we denote by $[a,b]$ the set of real numbers between $a$ and $b$, even in the case where $b<a$.

\begin{proposition}\label{regularsegment} 
	Let $g\in \mathrm{Homeo}(K)$. If a segment $[a,b]$ does not contain a break pair of $g$, then $g$ induces a monotonic homeomorphism from $[a,b]\cap K$ onto $[g(a),g(b)]\cap K$. In particular, $g([a,b]\cap K)=[g(a),g(b)]\cap K$. In this case, we say that $g$ is \emph{regular} on $[a,b]$.
\end{proposition}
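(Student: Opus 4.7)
The plan is to extend $g|_{[a,b]\cap K}$ to a continuous injective map $\tilde g:[a,b]\to\R$ defined on the whole interval, and then invoke the classical fact that a continuous injection on an interval is strictly monotonic.

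First I would define $\tilde g$ to equal $g$ on $[a,b]\cap K$ and to be the affine interpolation between $(c,g(c))$ and $(d,g(d))$ on each closed gap $[c,d]$ of $K$ contained in $[a,b]$. The no-break-pair hypothesis is exactly what makes this definition geometrically natural: for every such gap $(c,d)$, the pair $\{g(c),g(d)\}$ is also a pair of endpoints of a gap of $K$, so $\tilde g$ sends the open gap $(c,d)$ onto the corresponding open gap of $K$, which lies in $\R\setminus K$.

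I would then verify the two key properties of $\tilde g$. Injectivity is a short case analysis: on $[a,b]\cap K$ it is inherited from $g$; on a single closed gap $\tilde g$ is affine and non-constant; two points lying in distinct open gaps of $[a,b]$ are sent into distinct open gaps of $K$, which are pairwise disjoint; and a point of $K$ cannot have the same $\tilde g$-image as a point of an open gap, since the first image lies in $K$ and the second in $\R\setminus K$. Continuity inside each open gap is immediate from affineness; the only delicate case is continuity at a point $x\in K$ which is an endpoint of a gap $(c,d)$ whose image $(g(c),g(d))$ is very wide compared to $(c,d)$, because the affine slope of $\tilde g$ on $[c,d]$ can then be large. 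I would treat this by splitting the gaps of $[a,b]$ into those of width less than some $\delta_1$ (on which $|g(d)-g(c)|$ is uniformly small, by uniform continuity of $g$ on the compact set $K$) and those of width at least $\delta_1$ (on which the slope of $\tilde g$ is bounded by $\mathrm{diam}(g(K))/\delta_1$).

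Finally, with $\tilde g:[a,b]\to\R$ continuous and injective, standard real analysis forces it to be strictly monotonic; say strictly increasing, the decreasing case being symmetric. Then $\tilde g$ is a homeomorphism from $[a,b]$ onto $[g(a),g(b)]$. Since $\tilde g$ sends each open gap of $[a,b]$ into $\R\setminus K$, we get $\tilde g^{-1}(K)\cap[a,b]=[a,b]\cap K$, whence $g([a,b]\cap K)=[g(a),g(b)]\cap K$. Combined with the strict monotonicity and continuity just established, this gives the monotonic homeomorphism asserted by the proposition. The main obstacle is the continuity argument for $\tilde g$ at an endpoint of a wide target gap, and the dichotomy on gap width is exactly what is needed to overcome it.
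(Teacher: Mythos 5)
The paper states this proposition as elementary and offers no proof, so there is nothing to compare against directly; but your argument is correct and is a natural way to establish it. The affine extension $\tilde g$ across each gap of $K$ inside $[a,b]$ is well defined precisely because the no-break-pair hypothesis sends each gap $(c,d)\subset[a,b]$ onto a gap of $K$, and your injectivity case analysis is sound. One small remark on continuity: the genuinely delicate point is not continuity at an \emph{endpoint} $x=c$ of a wide-image gap (there the affine formula passes through $(c,g(c))$, so one-sided continuity at $c$ from inside the gap is automatic whatever the slope), but rather continuity at a point $x\in K$ accumulated from one side by infinitely many gaps; your width dichotomy together with uniform continuity of $g$ on $K$ disposes of exactly this case, since only finitely many gaps of $[a,b]$ exceed any fixed width $\delta_1$, and all the others near $x$ have both endpoints near $x$ and hence small image. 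Once $\tilde g:[a,b]\to\R$ is a continuous injection, the classical intermediate-value argument gives strict monotonicity, and the identity $g([a,b]\cap K)=[g(a),g(b)]\cap K$ then follows as you say because $\tilde g$ is onto $[g(a),g(b)]$ and sends points of $K$ into $K$ and points of the open gaps into $\R\setminus K$.
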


\begin{remark}
From the last two propositions we can deduce an interesting characterization of locally monotonic homeomorphisms of $K$.\\
$g:K\rightarrow K$ is a locally monotonic homeomorphism of $K$ if and only if there exist two finite coverings of $K$ by disjoints intervals $\{I_1,\ldots,I_p\}$ and $\{J_1,\ldots, J_p\}$ and for each $k=1,\ldots,p$ a homeomorphism $g_k$ from $I_k$ onto $J_k$ such that $g=g_k$ on $K\cap I_k$.\\
This characterization enlightens the similiraty between locally monotonic homeomorphisms and generalized interval exchange transformations. It also makes clear that the inverse of a locally monotonic homeomorphism of $K$ is still locally monotonic, which does not follow immediately from the definition.
\end{remark}

Let us conclude with the following consequence of the fact that $G$ is finitely generated and that its elements have a finite number of break points: 
\begin{lemma}\label{monotonic}
	There exists $r_0>0$ such that any homeomorphism in the generating set $S$ is regular on any interval of length less than $r_0$.
\end{lemma}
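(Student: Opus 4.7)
The plan is to exhibit an explicit positive $r_0$ by minimizing the gap widths of all break pairs of all generators. Since $S$ is finite and, by the preceding proposition, each $g\in S$ admits only finitely many break points, the collection
\[
\mathcal{B}=\{\{a,b\}\subset K : \{a,b\}\text{ is a break pair of some }g\in S\}
\]
is finite.

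For each break pair $\{a,b\}\in\mathcal{B}$ the interval $(a,b)$ is by definition a connected component of $\R\setminus K$, hence is nonempty, so $b-a>0$. I would therefore set
\[
r_0 \;=\; \min_{\{a,b\}\in\mathcal{B}}(b-a),
\]
with the convention $r_0=1$ (say) if $\mathcal{B}=\emptyset$. Because the minimum is taken over a finite family of strictly positive numbers, $r_0>0$.

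To finish, take any closed interval $[x,y]\subset\R$ with $y-x<r_0$ and any $g\in S$. If $[x,y]$ contained a break pair $\{a,b\}$ of $g$, one would have $\{a,b\}\in\mathcal{B}$ and $b-a\leq y-x<r_0$, contradicting the definition of $r_0$. Hence $[x,y]$ contains no break pair of $g$, and Proposition \ref{regularsegment} then yields that $g$ is regular on $[x,y]$.

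There is no real obstacle here: the argument is a direct combination of the finiteness of $S$, the finiteness of the break point set of each generator, and the fact that every break pair comes from a genuine (nondegenerate) gap of $K$. The only point worth stating cleanly is the separation of two roles of finiteness (finitely many generators, and finitely many break pairs per generator), which together make the minimum defining $r_0$ attained and positive.
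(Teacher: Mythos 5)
Your proof is correct and is exactly the argument the paper has in mind: the lemma is stated in the paper as a direct ``consequence of the fact that $G$ is finitely generated and that its elements have a finite number of break points,'' with no further detail, and your proof supplies precisely the missing details by taking $r_0$ to be the minimal gap width over the finitely many break pairs of the finitely many generators. The one small observation worth making explicit (which you do) is that each break pair bounds a nondegenerate gap of $K$, so the minimum is over strictly positive quantities and is therefore positive.
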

This property will be fundamental to study the action of $G$ at a local level: roughly speaking, given a path $g_n=f_{n-1}\circ\ldots \circ f_0$ ($f_k\in S$) in $G$ and a small interval $I$, as long as $g_n(I)$ does not grow too much, the action on $I$ is very similar to an action by homeomorphisms of the circle on a small arc.

\subsection{Random walk}\label{randomwalk}
 In order to use probabilistic methods, we fix a probability measure $P$ on $S$ with total support, and we consider the generated random walk. Let us introduce some notation.

Let $\Omega=S^{\mathbb{N}}$ where  $\mathbb{N}$ is the set of nonnegative integers. For any element $\omega$ in $\Omega$ and any $i \in \mathbb{N}$, we denote by $f_{\omega_{i}}$ the corresponding element of $S$ in the $i$-th factor of $S^{\mathbb{N}}$. We endow $\Omega$ with $\mathbb{P}=P^{\otimes\N}$ the product probability measure on $\Omega$ induced by $P$.

We say that a probability measure $\mu$ on $K$ is an \emph{invariant measure} for the action of $G$ on $K$ if, for any Borel set $A \subset K$ and any element $g$ of $G$, $\mu(g^{-1}(A))= \mu(A)$. We say that a probability measure $\mu$ on $K$ is a \emph{harmonic measure} (or \emph{stationary measure}) for the action of $(G,P)$ on $K$ if, for any Borel set $A \subset K$,
$$ \mu(A)= \sum _{s \in S} \mu(s^{-1}(A))P(\left\{ s \right\}).$$
By the Krylov-Bogoliubov trick the action of $G$ on $K$ admits a harmonic measure.
Such a measure $\mu$ on $K$ will be sometimes seen as a probability measure on $\mathbb{R}$ supported on $K$.

Denote by $\tau$ the one-sided shift on $\Omega$ and by $T$ the map
$$ \begin{array}{rrcl}
T: & \Omega \times K & \rightarrow & \Omega \times K \\
& (\omega,x) & \mapsto & (\tau(\omega),f_{\omega_{0}}(x))
\end{array}
.$$
For any integer $n$ in $\mathbb{N}$ and any point $x$ in $K$, we denote by $f_{\omega}^{n}(x)$ the point of $K$ which is the projection on $K$ of $T^{n}(\omega,x)$. Thus $f_{\omega}^n$ is the random composition $f_{\omega_{n-1}}\circ \cdots \circ f_{\omega_0}$. A probability measure $\mu$ is harmonic for the action of $(G,P)$ on $K$ if and only if the measure $\mathbb{P} \otimes \mu$ is invariant under $T$. Finally, we say that a harmonic measure $\mu$ is ergodic for the action of $G$ on $K$ if the measure $\mathbb{P} \otimes \mu$ is ergodic for the map $T$.  


\section{Local contractions}
The goal of the section is to prove the following proposition.
	
\begin{proposition}\label{localcontr}
	Suppose the action of $G$ on $K$ has no invariant probability measure. Then there exists $\lambda>0$ such that, for any point $x$ of $K$, for $\mathbb{P}$-almost every $\omega$ in $\Omega$, there exists a neighborhood $B$ of $x$ such that, for any $n\geq 0$, $f_\omega^n(B)$ has diameter less than $e^{-n\lambda}$. In particular,
	$$\forall n \geq 0,\forall y\in B, d(f_\omega^n(x),f_\omega^n(y)) \leq e^{-\lambda n}.$$
\end{proposition}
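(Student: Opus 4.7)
The plan is to adapt the proof of the analogous statement in \cite{Mal} for random walks on $\mathrm{Homeo}(\mathbb{S}^1)$. The crucial input that permits the adaptation is Lemma \ref{monotonic}: at scales below $r_0$, every generator acts as a monotonic homeomorphism on any interval it touches. Consequently, as long as the images of a small interval under the random walk stay below diameter $r_0$, the order is preserved at every step and the local dynamics near $x$ mimic exactly the circle setting.

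First, I would fix an ergodic harmonic measure $\mu$ on $K$ (provided by Krylov--Bogoliubov and ergodic decomposition) and study the skew product $T$ on $\Omega\times K$, which is ergodic with respect to $\mathbb{P}\otimes\mu$. For each pair $(\omega,x)$, let $D_n(\omega,x)=\mathrm{diam}(f_\omega^n(J))$, where $J$ is a small closed interval around $x$, tracked as long as $D_n$ stays below $r_0$. By Lemma \ref{monotonic}, at each step the current generator is regular on the current image, so $J$ is mapped monotonically by $f_\omega^n$ throughout. Applying Kingman's subadditive ergodic theorem together with the ergodicity of $T$, one obtains an almost sure deterministic contraction rate $\tfrac{1}{n}\log D_n(\omega,x)\to -\lambda$ for some $\lambda\geq 0$.

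Second, I would show that $\lambda>0$ under the assumption of no invariant probability measure. This is the crux of the argument and follows Malicet's strategy: if $\lambda$ were zero, then the failure of contraction, combined with a compactness argument on the space of probability measures and a tightness argument on the random images of $\mu$, would yield a $G$-invariant probability measure, contradicting the hypothesis. The circular-order arguments in \cite{Mal} ultimately only use monotonicity of the generators on small intervals, so they transfer once the small-scale framework is set up via Lemma \ref{monotonic}; break points of generators never appear within a single step applied to an interval shorter than $r_0$, so the local combinatorics of the circle proof carry over essentially verbatim.

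Third, I would upgrade the contraction from $\mu$-almost every $x$ to every $x\in K$. The set of points where the conclusion fails can be made $G$-invariant in an appropriate almost sure sense; combined with the contraction rate already established on a full-measure set, this set is either empty or carries a new $G$-invariant probability measure, once again contradicting the hypothesis. The main obstacle is the second step, but as the authors emphasize, since the whole argument is genuinely local in nature, only minor modifications of \cite{Mal} are needed, namely the replacement of global circular order by the local monotonicity supplied by Lemma \ref{monotonic}.
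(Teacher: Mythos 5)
The high-level outline you describe (fix an ergodic harmonic measure, establish a positive contraction rate by ergodic-theoretic means, and then upgrade from $\mu$-a.e.\ $x$ to every $x$) matches the structure of the paper's proof, and your use of Lemma~\ref{monotonic} as the bridge to the circle case is exactly the right observation. But the central mechanism in your second paragraph does not work as stated. Kingman's subadditive ergodic theorem cannot be applied to $D_n(\omega,x)=\mathrm{diam}\bigl(f_\omega^n(J)\bigr)$: the diameter of the image of a fixed interval is not a subadditive cocycle over the skew product $T$. For diffeomorphisms one has $\log\|Df^{n+m}\|\leq \log\|Df^n\circ f^m\|+\log\|Df^m\|$, but homeomorphisms have no derivative, and $\mathrm{diam}\bigl(f_\omega^{n+m}(J)\bigr)$ does not split in any comparable fashion because the intermediate set $f_\omega^m(J)$ is not the ``test interval'' attached to $f_\omega^m(x)$. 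Truncating at scale $r_0$ does not cure this: it only tells you when each single step is order-preserving, not that the logarithms compose subadditively. This is precisely the obstruction that Malicet's method is designed to circumvent, and the paper follows him: one replaces the nonexistent derivative by the Radon--Nikodym derivative $J(\omega,x)=\frac{d(f_{\omega_0}^{-1})_*\mu}{d\mu}(x)$ of the harmonic measure. Because $\mu$ is stationary, $\log J$ is a genuine additive cocycle for $T$, so Birkhoff (not Kingman) applies and gives an exponential decay of the \emph{$\mu$-measure} of small intervals at rate $h_\epsilon(\mu)$. Converting measure-contraction into diameter-contraction then requires a second Birkhoff application to $\log^+Q$ with $Q(x)=\sup_{I\ni x}|I|/\mu(I)$, which grows subexponentially along orbits.

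Your argument for $\lambda>0$ is also vaguer than what is actually needed and implicitly assumes a compactness/tightness argument that the paper does not use. In the paper, positivity is much more direct: $h(\mu)\geq 0$ by Jensen's inequality, and $h(\mu)=0$ would force equality, hence $J\equiv 1$ a.e., hence $g_*\mu=\mu$ for all generators, contradicting non-invariance (Lemma~\ref{positiveentropy}); a monotone-convergence argument (Lemma~\ref{approxentr}) then passes to $h_\epsilon(\mu)>0$. Finally, your third step (``the bad set would carry a new invariant measure'') is not how the paper upgrades from $\mu$-a.e.\ $x$ to every $x$. The paper first extracts a finite open cover $\{B_k\}$ of $K$ on each piece of which contraction holds with a uniform positive probability (Lemma~\ref{uniform}, using that every orbit closure supports an ergodic harmonic measure and transporting the local event by a finite piece of the random walk), and then applies Lévy's 0--1 law to show the local-contraction event at an arbitrary $x$ has probability one. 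So while your roadmap is in the right spirit, the core analytic step is missing the Radon--Nikodym cocycle idea, and the remaining two steps need the more quantitative arguments the paper actually supplies.
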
		

\begin{remark}~	
	\begin{enumerate}
		\item By reducing $B$ we can get that for every $n\ge 0$, $f_\omega^n(B)$ has diameter less than $r_0$, where $r_0$ is given by Lemma \ref{monotonic}. So in the conlusion of the proposition we can also state that for every $n\ge 0$, $f_\omega^n$ is monotonic on $B$.
		\item If $L\subset K$ is a minimal closed invariant subset and if $x\in L$ then it is standard that for $\mathbb{P}$-almost every $\omega$ in $\Omega$, $(f_\omega^n(x))$ is dense in $L$. Combining with Proposition \ref{localcontr}, it implies that there exists a neighborhood $B$ of $x$ and an element $g=f_\omega^n\in G$ monotonic on $B$ such that $g(B)$ is a (strict) subset of $B$, which implies in particular that $g$ has a fixed point in $B$. In consequence, $G$ does not act freely on $K$.
		\item Working a little more one can obtain two elements $g,h$ such that $g(B)$ and $h(B)$ are two disjoint subsets of $B$, which implies that $g$ and $h$ generate a free semigroup. However, Proposition \ref{localcontr} is not sufficient to construct a free group, we need to understand a more global behavior of the random walk, what will be done in sections afterwards.
	\end{enumerate}
\end{remark}

To prove the proposition, we follow \cite{Mal} where a similar statement is proved for random walks on $\mathrm{Homeo}(\mathbb{S}^1)$, with only minor modifications.\\

Let us start with two elementary facts on harmonic measures:

\begin{lemma} \label{acatoms}~
\begin{enumerate}
\item	If $\mu$ is a harmonic measure then for any element $s$ in $S$, the measure $s_{*} \mu$ is absolutely continuous with respect to $\mu$. If $S$ is symmetric, we actually have that $g_*\mu$ is equivalent to $\mu$ for any $g$ in $G$.
\item	If a harmonic measure $\mu$ has atoms, then the action of $G$ on $K$ has a finite orbit, hence an invariant probability measure.
\end{enumerate}
\end{lemma}

\begin{proof}

If $A$ is a Borel set of $K$ such that $\mu(A)=0$, by applying the definition of harmonic measure, we obtain that $\sum _{s \in S} \mu(s^{-1}(A))P(\left\{ s \right\})=0$ where $P(\{s\})>0$, so for any $s \in S$, $\mu(s^{-1}(A))=0$. Thus for $s$ in $S$, $s_*\mu$ is absolutely continuous with respect to $\mu$. If $S$ is symmetric then we also have that $(s^{-1})_*\mu$ is absolutely continuous with respect to $\mu$, so $s^*\mu$ and $\mu$ are equivalent, and since $S$ is a generating set for $G$, we actually have that $g_*\mu$ and $\mu$ are equivalent for every $g$ in $G$.\\

	Suppose now that $\mu$ has atoms and let $x\in K$ be such that $\mu(\left\{ x \right\})=M$ is maximal. Using the definition of harmonic measure and the maximality of $M$, we obtain that, for any element $s$ in $S$, $\mu( \left\{ s^{-1}(x) \right\})=M$. Hence the finite set $\left\{ x \in K \ | \ \mu( \left\{ x \right\})=M \right\}$ is invariant under the action of $G$.  
\end{proof}

Finally, we need the following general measure theory lemma (Proposition 5 of \cite{Led}).

\begin{lemma} \label{generaltheory}
	Let $\mu_{1}$ and $\mu_{2}$ be Borel probability measures on $\mathbb{R}$. For any point $x$ in $\mathbb{R}$, we denote by $J(x)= \frac{d\mu_{2}}{d \mu_{1}}(x)$ the Radon-Nykodym derivative of $\mu_{2}$ with respect to $\mu_{1}$ at the point $x$. Then, for $\mu_{1}$ almost every point $x$ in $\mathbb{R}$, we have
	$$ J(x)= \lim_{ I \in \mathcal{I}_{x} \ \left|I \right| \rightarrow 0} \frac{\mu_{2}(I)}{\mu_{1}(I)},$$
	where $\mathcal{I}_{x}$ is the set of open intervals which contain the point $x$.
	Moreover, if 
	$$q(x)= \sup \left\{ \frac{\mu_{2}(I)}{\mu_{1}(I)} \ | \ I \in \mathcal{I}_{x} \right\},$$
	then $\log^{+}(q)= \max(\log(q),0)$ is an element of $L^{1}(\mu_{1})$. 
\end{lemma}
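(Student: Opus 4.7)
My plan is to treat the two assertions separately, using standard techniques from the Lebesgue differentiation theory for Radon measures on the real line.

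For the first assertion, the approach is to apply a Lebesgue-type differentiation theorem. Write the Lebesgue decomposition $\mu_2 = J \mu_1 + \mu_2^{\mathrm{s}}$ where $\mu_2^{\mathrm{s}} \perp \mu_1$. The claim then splits into showing, for $\mu_1$-a.e.\ $x$, that $\frac{(J\mu_1)(I)}{\mu_1(I)} \to J(x)$ and $\frac{\mu_2^{\mathrm{s}}(I)}{\mu_1(I)} \to 0$ as $I \in \mathcal{I}_x$ shrinks to $x$. Both are standard consequences of the fact that intervals in $\mathbb{R}$ form a Besicovitch differentiation basis for any Radon measure: the relevant statement is exactly the Besicovitch differentiation theorem on $\mathbb{R}$, which I would invoke directly (or prove via a Vitali covering argument, since on $\mathbb{R}$ the covering constant is $1$, making intervals particularly convenient).

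For the second assertion, the core tool is a weak-type $(1,1)$ maximal inequality:
\[
\mu_1\bigl(\{x : q(x) > t\}\bigr) \leq \frac{C}{t}, \qquad t > 0,
\]
for some absolute constant $C$. To prove this, fix $t > 0$, and for each $x \in \{q > t\}$ choose an open interval $I_x \ni x$ with $\mu_2(I_x) > t\, \mu_1(I_x)$. A Vitali covering argument on $\mathbb{R}$ extracts a countable subfamily $(I_k)$ covering $\{q > t\}$ with bounded overlap (say multiplicity at most $N$, which can even be taken to be $2$ on $\mathbb{R}$). Then
\[
\mu_1\bigl(\{q > t\}\bigr) \leq \sum_k \mu_1(I_k) \leq \frac{1}{t}\sum_k \mu_2(I_k) \leq \frac{N}{t}\,\mu_2(\mathbb{R}) = \frac{N}{t}.
\]

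With the weak-type bound in hand, integrability follows from the layer-cake formula:
\[
\int \log^+(q)\, d\mu_1 = \int_0^\infty \mu_1\bigl(\{q > e^s\}\bigr)\, ds \leq \int_0^\infty \min(1,\, N e^{-s})\, ds < \infty.
\]

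The only genuine subtlety is setting up the Vitali-type covering for the measure $\mu_1$ (rather than Lebesgue measure), but this is a standard feature of covering lemmas on $\mathbb{R}$: the argument relies only on the linear order structure of intervals, not on any doubling property of the measures involved. So I would expect this step to be routine once the right covering lemma is quoted, and I do not anticipate a significant obstacle in the proof.
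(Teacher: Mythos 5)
The paper states this lemma without proof, presenting it as a known ``general measure theory lemma,'' so there is no in-paper argument to compare against; the relevant question is simply whether your proof is correct, and it is. The first assertion is precisely the Besicovitch (or Lebesgue) differentiation theorem for Radon measures on $\R$, applied to the Lebesgue decomposition $\mu_2 = J\mu_1 + \mu_2^{\mathrm{s}}$, and the family of \emph{all} open intervals (not just centered ones) is indeed a differentiation basis on $\R$ because it admits a Vitali-type covering lemma with overlap at most $2$. That same covering lemma drives your weak-type $(1,1)$ bound $\mu_1(\{q>t\}) \le N/t$, and the layer-cake computation $\int \log^+ q \, d\mu_1 = \int_0^\infty \mu_1(\{q > e^s\})\, ds \le \int_0^\infty \min(1, Ne^{-s})\, ds < \infty$ then gives integrability. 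The only point worth flagging for a careful write-up is that you should state the covering lemma precisely (a countable subfamily of intervals covering the same union with multiplicity $\le 2$; it is obtained by restricting to compact subsets of each connected component and removing redundant intervals), since it is the one nonstandard-looking ingredient compared to the textbook case of centered balls. Otherwise the argument is complete and is the expected route.
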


Now, we fix a harmonic measure $\mu$. We assume that $\mu$ is not $G$-invariant and has no atoms: it is a consequence of the assumptions of Proposition \ref{localcontr} thanks to Lemma \ref{acatoms}. We set 

$$J(\omega,x)= \frac{d (f_{\omega_{0}}^{-1})_{*} \mu}{d\mu}(x).$$
By definition, this is equivalent to say that $d(f_{\omega_{0}}^{-1})_{*}\mu=J(\omega,x) d\mu+d\nu$ where $\nu\perp\mu$ (if $S$ is symmetric, we actually have $d(f_{\omega_{0}}^{-1})_{*}\mu=J(\omega,x) d\mu$ by Lemma \ref{acatoms}).

Observe that
$$ \int_{\Omega \times K} J(\omega,x) d\mu(x) d \mathbb{P}(\omega)\leq \int_{\Omega} \int_{f_{\omega_{0}}(K)} d \mu d\mathbb{P}(\omega) =\int_{\Omega} \int_{K} d \mu d\mathbb{P}(\omega)=1.$$
Hence $J$ belongs to $L^{1}(\mu \otimes \mathbb{P})$.
Therefore the function $\log^{+}(J)$ also belongs to $L^{1}(\mu \otimes \mathbb{P})$ and the quantity
$$ h(\mu)= - \int_{\Omega \times K} \log(J(\omega,x)) d\mu(x) d\mathbb{P}(\omega)$$
is well-defined but its value can be $+\infty$.

\begin{lemma} \label{positiveentropy}
	$h(\mu)>0.$
\end{lemma}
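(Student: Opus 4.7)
The plan is to recognize $-\log J(\omega,x)$ as a pointwise relative entropy density and obtain positivity from Jensen's inequality combined with the strict concavity of $\log$, using crucially that $\mu$ is not $G$-invariant.

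First, observe that for $\mathbb{P}$-almost every $\omega$, Lemma~\ref{ac} tells us that $(f_{\omega_0}^{-1})_*\mu$ is a probability measure on $K$ absolutely continuous with respect to $\mu$ with Radon--Nikodym derivative $J(\omega,\cdot)$, so
$$\int_K J(\omega,x)\,d\mu(x)=(f_{\omega_0}^{-1})_*\mu(K)=1.$$
Jensen's inequality applied to the concave function $\log$ then yields, for $\mathbb{P}$-a.e. $\omega$,
$$\int_K \log J(\omega,x)\,d\mu(x)\leq\log\!\left(\int_K J(\omega,x)\,d\mu(x)\right)=0,$$
with equality if and only if $J(\omega,\cdot)$ is constant $\mu$-a.e. (necessarily equal to $1$, since it integrates to $1$). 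Integrating over $\omega$, which is legitimate because $\log^{+}J\in L^{1}(\mu\otimes\mathbb{P})$, gives $h(\mu)\geq 0$.

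For strict positivity, I would argue by contradiction: if $h(\mu)=0$, then the non-positive function $\omega\mapsto\int_K\log J(\omega,x)\,d\mu(x)$ has $\mathbb{P}$-integral equal to $0$, so it vanishes for $\mathbb{P}$-a.e. $\omega$. By the equality case of Jensen, this forces $J(\omega,x)=1$ for $(\mu\otimes\mathbb{P})$-almost every $(\omega,x)$, that is, $(f_{\omega_0}^{-1})_*\mu=\mu$ for $\mathbb{P}$-a.e. $\omega$. Since $P$ has full support on $S$, this implies $(s^{-1})_*\mu=\mu$, equivalently $s_*\mu=\mu$, for every generator $s\in S$; as $S$ generates $G$, the measure $\mu$ would be $G$-invariant, contradicting the standing hypothesis on $\mu$.

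The only subtlety to watch for is the possibility that $\int_K\log J(\omega,x)\,d\mu(x)=-\infty$ on a set of positive $\mathbb{P}$-measure; this is not really an obstacle, since it makes $h(\mu)=+\infty$ automatically, and otherwise the pointwise Jensen bound integrates without issue. So I do not expect a genuinely hard step here: the whole argument is a routine application of Jensen's inequality together with the observation that full support of $P$ promotes the random equality $(f_{\omega_0}^{-1})_*\mu=\mu$ into $G$-invariance.
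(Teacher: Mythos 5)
Your proof is correct and follows essentially the same route as the paper: Jensen's inequality (via convexity of $-\log$) gives $h(\mu)\geq 0$, and the equality case of Jensen forces $J\equiv 1$ a.e., hence $s_*\mu=\mu$ for every generator $s\in S$, contradicting non-invariance. The only cosmetic difference is that you condition on $\omega$ and apply Jensen fiberwise before integrating over $\Omega$, whereas the paper applies Jensen once to the full double integral; both rest on the already-established identity $\int_{\Omega\times K} J\,d\mu\,d\mathbb{P}=1$.
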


\begin{proof}
	Convexity of the $-\log$ function and Jensen's inequality imply that
	$$h(\mu) \geq - \log\left(\int_{\Omega \times K} J(\omega,x) d\mu(x) d \mathbb{P}(\omega)\right)=0.$$
	Moreover, if $h(\mu)=0$ then we have an equality in Jensen's inequality, so, for $\mathbb{P} \otimes \mu$-almost every $(\omega,x) \in \Omega \times K$, $J(\omega,x)=1$. It implies that $f_{\omega_0 *}\mu=\mu$ for every $f_{\omega_0}$ in the generating set $S$, hence $\mu$ is $G$-invariant, which contradicts the assumption.
\end{proof}

Recall that any measure on $K$ can be seen as a measure on $\mathbb{R}$ supported on $K$. For $\epsilon >0$ and $(\omega,x) \in \Omega \times K$, let
$$J_{\epsilon}(\omega,x)= \sup \left\{ \frac{(f_{\omega_0}^{-1})_{*}\mu(I)}{\mu(I)} \ | \ I \in \mathcal{I}_{x}, \ |I| < \epsilon \right\}.$$
By Lemma \ref{generaltheory}, the function $\log^{+}(J_{\epsilon})$ belongs to $L^{1}(\mu)$. Hence the quantity
$$h_{\epsilon}(\mu)= - \int_{\Omega \times K} \log(J_{\epsilon}) d(\mu \otimes \mathbb{P})$$
is well-defined but its value can be $+\infty$. 

\begin{lemma} \label{approxentr}
	There exists $\epsilon_{0} >0$ such that, for any $0 <\epsilon< \epsilon_{0}$, $h_{\epsilon}(\mu) >0$.
\end{lemma}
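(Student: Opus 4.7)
The plan is to show that $h_\epsilon(\mu)$ converges to $h(\mu)$ as $\epsilon \to 0$ by a monotone convergence argument, and then invoke Lemma~\ref{positiveentropy} which guarantees $h(\mu) > 0$. Since $h_\epsilon$ will be continuous in $\epsilon$ from the right at $0$ in the extended sense, the strict positivity of the limit will immediately yield the desired $\epsilon_0$.

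First I would establish the pointwise convergence $J_\epsilon \searrow J$ almost everywhere on $\Omega \times K$. Monotonicity in $\epsilon$ is immediate, since for $\epsilon' < \epsilon$ the supremum defining $J_{\epsilon'}$ is taken over a subfamily of the intervals used for $J_\epsilon$. For the convergence to $J$, I would apply Lemma~\ref{generaltheory} to $\mu_1 = \mu$ and $\mu_2 = (f_{\omega_0}^{-1})_*\mu$ (which is absolutely continuous with respect to $\mu$ by Lemma~\ref{ac}): for $\mu$-a.e.\ $x$, the ratio $\mu_2(I)/\mu_1(I)$ admits a \emph{genuine} limit equal to $J(\omega,x)$ as $|I|\to 0$. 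Because this is a true limit and not merely a $\limsup$, the suprema over $\{I \ni x : |I| < \epsilon\}$ also tend to $J(\omega,x)$ as $\epsilon \to 0$. Fubini then gives the almost everywhere convergence on $\Omega \times K$.

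It remains to swap the limit with the integral. Lemma~\ref{generaltheory} also furnishes, for each $s \in S$, a function $q_s(x) = \sup_{I \ni x} (f_s^{-1})_*\mu(I)/\mu(I)$ with $\log^+ q_s \in L^1(\mu)$. Since $S$ is finite, the function $q(\omega,x) = q_{\omega_0}(x)$ satisfies $\log^+ q \in L^1(\mu \otimes \mathbb{P})$, and $J_\epsilon \leq q$ for every $\epsilon>0$. Consequently $-\log J_\epsilon$ is an increasing family (as $\epsilon \searrow 0$) converging pointwise to $-\log J$, while its negative part $(\log J_\epsilon)^+ \leq \log^+ q$ is controlled uniformly by an $L^1$ function. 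Applying the monotone convergence theorem to the nonnegative family $-\log J_\epsilon + \log^+ q$ yields
\[
h_\epsilon(\mu) = -\int \log J_\epsilon\, d(\mu \otimes \mathbb{P}) \xrightarrow[\epsilon \to 0]{} -\int \log J\, d(\mu \otimes \mathbb{P}) = h(\mu),
\]
with values in $(0, +\infty]$ thanks to Lemma~\ref{positiveentropy}. Hence there exists $\epsilon_0 > 0$ such that $h_\epsilon(\mu) > 0$ for every $0 < \epsilon < \epsilon_0$. The only delicate point is the almost-everywhere identity $\lim_{\epsilon \to 0} J_\epsilon = J$, which relies on the existence of a genuine symmetric derivative provided by Lemma~\ref{generaltheory}; the integrability issues are then handled routinely by the uniform dominating function $\log^+ q$.
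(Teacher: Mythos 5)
Your proof is correct and follows essentially the same route as the paper: monotone pointwise convergence of $-\log J_\epsilon$ to $-\log J$ (guaranteed by Lemma~\ref{generaltheory}), the monotone convergence theorem to pass to the limit in the integral, and then Lemma~\ref{positiveentropy} to conclude. The only difference is that you explicitly exhibit the $L^1$ lower bound $-\log^+ q$ needed to justify the monotone convergence theorem for a possibly sign-changing increasing family, a step the paper leaves implicit; that is a welcome clarification, not a departure.
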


\begin{proof}
	Observe that, for almost every point $(\omega,x)$ in $\Omega \times K$, for any sequence $(\epsilon_{n})_{n \geq 0}$ of positive real numbers which decreasingly converges to $0$, the sequence $(-\log(J_{\epsilon_{n}})(\omega,x))_{n \geq 0}$ is increasing and, by Lemma \ref{generaltheory},
	$$ \lim_{n \rightarrow + \infty} -\log(J_{\epsilon_{n}}(\omega,x))=- \log(J(\omega,x)).$$
	Hence, by the monotone convergence theorem $\displaystyle \lim_{n \rightarrow + \infty} h_{\epsilon_{n}}(\mu)=h(\mu)$ (the function $-\log(J_{\epsilon_{n}})$ is not necessarily positive but is bounded from below uniformly in $n$ by an integrable function so that the monotone convergence theorem still applies). Therefore $\displaystyle \lim_{\epsilon \rightarrow 0} h_{\epsilon}(\mu)=h(\mu)$ and Lemma \ref{approxentr} is a consequence of Lemma \ref{positiveentropy}.
\end{proof}

With all this setting we are ready to prove the main lemma:
\begin{lemma}\label{localcontrae}
	Suppose the action of $G$ on $K$ has no invariant probability measure. Let $\mu$ be a harmonic measure which is ergodic. Then there exists $\lambda>0$ such that for $\mathbb{P}\otimes\mu$-almost every $(\omega,x)$ in $\Omega\times K$, there exists a neighborhood $B$ of $x$ so that 
	$$\forall n \geq 0,\forall y\in B, d(f_\omega^n(x),f_\omega^n(y)) \leq  e^{-\lambda n}.$$
\end{lemma}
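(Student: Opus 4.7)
My plan is to follow closely the strategy of \cite{Mal} for the circle. The core idea is to apply Birkhoff's ergodic theorem to the log-cocycle $\log J_\epsilon$ over the skew product $T$, and then transfer the resulting $\mu$-mass contraction into a Euclidean contraction using local monotonicity.

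By Lemma \ref{approxentr} I fix $\epsilon > 0$ small enough that $\alpha := h_\epsilon(\mu) > 0$, and moreover so that $\epsilon < r_0$, where $r_0$ is the regularity radius from Lemma \ref{monotonic}. Since $T$ preserves $\mathbb{P} \otimes \mu$ and is ergodic by hypothesis, and $\log^+ J_\epsilon \in L^1(\mathbb{P} \otimes \mu)$ by Lemma \ref{generaltheory}, the Birkhoff ergodic theorem applied to $\log J_\epsilon$ yields, for $\mathbb{P} \otimes \mu$-almost every $(\omega,x)$,
$$\frac{1}{n} \sum_{k=0}^{n-1} \log J_\epsilon\bigl(T^k(\omega,x)\bigr) \xrightarrow[n \to \infty]{} -\alpha.$$
Setting $\lambda := \alpha/2 > 0$, this provides a random $N(\omega,x)$ such that $\prod_{k=0}^{n-1} J_\epsilon(T^k(\omega,x)) \leq e^{-2\lambda n}$ for every $n \geq N(\omega,x)$.

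For such a typical $(\omega,x)$ one picks a small interval $I_0 \ni x$ of Euclidean diameter less than $\epsilon$ and of $\mu$-mass $\mu(I_0) \leq \delta$, where $\delta > 0$ is to be chosen small (possible since $\mu$ has no atoms, by Lemma \ref{atoms}, and $x$ is $\mu$-typical). Setting $I_n := f_\omega^n(I_0 \cap K)$, I argue by induction: as long as each $I_k$ has Euclidean diameter less than $\epsilon$, Lemma \ref{monotonic} ensures that $f_{\omega_k}$ is regular on the convex hull of $I_k$, so $I_{k+1}$ is the intersection with $K$ of an interval containing $f_\omega^{k+1}(x)$, and iterating the defining bound of $J_\epsilon$ stepwise gives $\mu(I_n) \leq \delta \prod_{k=0}^{n-1} J_\epsilon(T^k(\omega,x)) \leq \delta e^{-2\lambda n}$ once $n \geq N(\omega,x)$.

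The main obstacle is the bootstrap: converting the exponential $\mu$-mass decay into exponential decay of Euclidean diameters, while simultaneously verifying that these diameters never exceed $\epsilon$ so as to keep the above iteration valid. Following \cite{Mal}, this is achieved by a second application of the Birkhoff ergodic theorem to an auxiliary function that quantifies, at $\mu$-typical points $y$, how small $\mu$-mass of an interval around $y$ forces small Euclidean diameter (through a modulus of continuity of the distribution function of $\mu$, which is continuous as $\mu$ has no atoms). By ergodicity of $T$, the orbit $(f_\omega^n(x))_n$ equidistributes with respect to $\mu$, so this auxiliary cocycle has a definite average along the orbit, which, combined with a sufficiently small initial choice of $\delta$, allows one to close the bootstrap and deduce exponential Euclidean contraction at rate $\lambda$ (up to possibly reducing $\lambda$). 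Finally, for the finitely many initial steps $n \leq N(\omega,x)$, the neighborhood $B$ is further shrunk by continuity of each $f_\omega^k$ to ensure that $\mathrm{diam}\bigl(f_\omega^n(B)\bigr) \leq e^{-\lambda n}$ holds for every $n \geq 0$.
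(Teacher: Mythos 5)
Your overall strategy is the paper's strategy, and the first Birkhoff application (to $\log J_\epsilon$, using Lemmas~\ref{approxentr} and~\ref{monotonic}) together with the final shrinking step is carried out correctly. But the step that you flag as ``the main obstacle'' --- converting exponential decay of $\mu(I_n)$ into exponential decay of $|I_n|$, while keeping $|I_n|<\epsilon$ so the induction is valid --- is not made precise, and the specific mechanism you propose would not work. You invoke ``a modulus of continuity of the distribution function of $\mu$, which is continuous as $\mu$ has no atoms''. A modulus of continuity of $F_\mu$ controls $\mu(I)$ \emph{from above} in terms of $|I|$, whereas you need the reverse implication: $\mu(I)$ small $\Rightarrow |I|$ small. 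No such uniform bound exists in this setting: when $K$ is a Cantor set, $\mu$ has gaps, so intervals of positive Euclidean length can carry zero $\mu$-mass. Continuity of $F_\mu$ gives nothing in that direction.

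What the paper actually does is introduce the pointwise maximal ratio $Q(x)=\sup\{|I|/\mu(I): I\in\mathcal I_x\}$, uses the differentiation lemma (Lemma~\ref{generaltheory}, with $\mu_1=\mu$ and $\mu_2$ the normalized Lebesgue measure) to conclude $\log^+Q\in L^1(\mu)$, and then deduces from Birkhoff that $\frac1n\log^+Q(f_\omega^n(x))\to 0$ for $\mathbb P\otimes\mu$-a.e.\ $(\omega,x)$. Fixing $0<\alpha<\beta<h_\epsilon(\mu)$, one then has, for $n\geq n_0$, both $\prod_{k<n}J_\epsilon(T^k(\omega,x))\leq e^{-\beta n}$ and $Q(f_\omega^n(x))\leq e^{\alpha n}$, so that $|I_n|\leq Q(f_\omega^n(x))\,\mu(I_n)\leq e^{(\alpha-\beta)n}$, which is $\leq\epsilon$ for $n_0$ large and closes the induction with rate $\lambda=\beta-\alpha$. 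The essential ingredient you are missing is precisely this pointwise, $\mu$-a.e.\ finite (but not uniformly bounded) $Q$, and the $L^1$-integrability supplied by Lemma~\ref{generaltheory}; a uniform modulus-of-continuity argument is both unavailable and unnecessary.
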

\begin{proof}
	Let $r_0 >0$ be given by Lemma \ref{monotonic}. We fix $\epsilon >0$ sufficiently small so that $h_{\epsilon}(\mu) >0$ and $\epsilon<r_0$. By ergodicity of the measure $\mu$ and the Birkhoff ergodic theorem, for $\mathbb{P} \otimes \mu$-almost every $(\omega,x)$ in $\Omega \times K$,
	\begin{equation}\label{Birkhoff1}
	\lim_{n \rightarrow + \infty} \frac{1}{n} \sum_{k=0}^{n-1} \log(J_{\epsilon}(T^{k}(\omega,x)))=- h_{\epsilon}(\mu).	\end{equation}
	
	For $x \in K$ we set
	$$Q(x)= \sup \left\{ \frac{|I|}{\mu(I)} \ | \ I \in \mathcal{I}_{x} \right\}.$$
	By Lemma \ref{generaltheory}, the function $\log^{+}(Q)$ belongs to $L^{1}(\mu)$. Hence as a consequence of the Birkhoff ergodic theorem applied to $\log^{+}(Q)$, for $\mathbb{P} \otimes \mu$-almost every $(\omega,x)$ in $\Omega \times K$,
		\begin{equation}\label{Birkhoff2}\lim_{n \rightarrow + \infty} \frac{\log^{+}(Q)(f_{\omega}^{n}(x))}{n}=0. \end{equation}
	
	Let us consider a point $(\omega,x)$ where both (\ref{Birkhoff1}) and (\ref{Birkhoff2}) hold. Then, fixing $0<\alpha<\beta<h_\ep(\mu)$, there exists $n_0\in\N$ such that 
	\begin{equation}\label{bound1}
	\forall n \geq n_0, \ \prod_{k=0}^{n-1} J_{\epsilon}(\tau^{k}(\omega),f_{\omega}^{k}(x)) \leq e^{- \beta n}
	\end{equation}
	 and
	\begin{equation}\label{bound2}
	\forall n \geq n_0, Q(f_\omega^n(x))\leq e^{\alpha n}
	\end{equation}
    
    	Let $y \in K$. For $n\geq 0$, let us denote $I_n=[f_\omega^n(x),f_\omega^n(y)]$ (recall the convention that $[a,b]$ is the set of real numbers between $a$ and $b$, even if $b<a$). We see that if $|I_n|\leq r_0$, then $f_{\omega_n}$ is regular on $I_n$ (in the sense of Proposition \ref{regularsegment}) so $f_{\omega_n}(I_n\cap K)=I_{n+1}\cap K$, and in particular $\mu(I_{n+1})=(f_{\omega_n}^{-1})_*\mu(I_n)$.
    
Take $0 <\ep < \min(r_0,1)$. For $n\geq n_0$, if we have $|I_k|\leq \ep$ for $k=0,\ldots, n-1$, then $\mu(I_{k+1})\leq J_\ep(\tau^k(\omega),f_{\omega_k}(x))\mu(I_k)$ for $k<n$, and so $\mu(I_n)\leq  e^{-\beta n}\mu(I_0)\leq e^{-\beta n}$ by (\ref{bound1}), and then $|I_n|\leq e^{(\alpha-\beta)n}$ by (\ref{bound2}). In particular if we have chosen $n_0$ large enough so that $e^{(\alpha-\beta)n}\leq \ep$, which is obviously possible since $\alpha<\beta$, it implies that $|I_n|\leq \ep$.
    
    So, if $y$ is close enough to $x$ so that $|I_{n_0}|\leq \ep$, we have by induction that $\forall n\geq n_0, |I_n|\leq e^{-\lambda n}$ where $\lambda=\beta-\alpha$. And if $y$ is close enough to $x$, the previous inequality also holds for $n\leq n_0$.

\end{proof}

\begin{lemma}\label{uniform}
	There exist a positive real number $\lambda_0$ and an open cover $\{B_k\}_{1\leq k\leq p}$ of $K$ such that for every $k=1,\ldots,p$, the set
	$$\left\{\omega\in \Omega,\forall n\geq 0, \forall y,z \in B_k, d(f_\omega^n(y),f_\omega^n(z))\leq e^{-n\lambda_0}\right\}$$
	has positive $\mathbb{P}$-probability.
\end{lemma}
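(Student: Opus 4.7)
The plan is to upgrade the $\mathbb{P}\otimes\mu$-almost sure statement of Lemma \ref{localcontrae} to an explicit finite open cover of $K$. I would start by picking an ergodic harmonic measure $\mu$, which exists because the set of harmonic measures is a nonempty (by Krylov--Bogoliubov) compact convex subset of the space of probability measures on $K$, and thus admits extremal points. Applying Lemma \ref{localcontrae} to $\mu$ then yields an exponent $\lambda>0$ and a set of pairs $(\omega,x)$ of full $\mathbb{P}\otimes\mu$-measure on which exponential contraction at rate $\lambda$ of some neighborhood of $x$ holds.

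Next I would fix $\lambda_0\in(0,\lambda)$; the slack $\lambda-\lambda_0$ serves to absorb the factor $2$ coming from the triangle inequality when passing from pairs $(x,y)$ to pairs $(y,z)$ lying in the ball, with small initial values of $n$ handled by further shrinking of the radius. By Fubini, for $\mu$-a.e.\ $x$, $\mathbb{P}$-almost surely some ball $B(x,r)\cap K$ contracts at rate $\lambda_0$ for all $n\geq 0$; since this event is monotone in $r$, for each such $x$ one can find $r(x)>0$ with
$$\mathbb{P}\left(\forall n\geq 0,\ \forall y,z\in B(x,r(x))\cap K,\ d(f_\omega^n(y),f_\omega^n(z))\leq e^{-n\lambda_0}\right)\geq \tfrac12.$$

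To produce the finite cover, I would observe that the set $A$ of such good points has full $\mu$-measure, hence is dense in $\mbox{supp}(\mu)$; since $\mbox{supp}(\mu)$ is compact, a finite subcover by balls $B(x_k,r(x_k))\cap K$ with $x_k\in A$ can be extracted, each of which satisfies the positive-probability contraction property by the previous step.

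The hard part will be handling points of $V=K\setminus \mbox{supp}(\mu)$, since Lemma \ref{localcontrae} gives no direct information about them. The harmonicity of $\mu$ does force $s^{-1}(V)\subset V$ for every $s\in S$ (take $A=V$ in the harmonicity identity), so $\mbox{supp}(\mu)$ is $S$-forward invariant. To cover $V$, I would either invoke additional ergodic harmonic measures whose supports jointly cover $K$, or show that a small enough neighborhood of any $y\in V$ has its forward random orbit enter $\mbox{supp}(\mu)$ with positive probability, after which the contraction just established can be transported back to $y$ via continuity of the finitely many generators. Once this is resolved, combining with the cover of $\mbox{supp}(\mu)$ produces the desired finite open cover of $K$ with common exponent $\lambda_0$.
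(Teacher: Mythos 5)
Your proposal starts along the right track (take an ergodic harmonic measure, apply Lemma \ref{localcontrae}, extract a finite cover from the positive-probability contracting neighborhoods), but the treatment of $V=K\setminus\mathrm{supp}(\mu)$ is where the argument breaks down, and the fix you sketch does not work. You correctly observe that harmonicity forces $s^{-1}(V)\subset V$ for each $s\in S$, i.e.\ $\mathrm{supp}(\mu)$ is forward-invariant. But if the generating set is symmetric (which is the natural hypothesis for this random walk), the same reasoning applied to $s^{-1}$ gives $s(V)\subset V$ as well, so $V$ is a $G$-invariant set. In that case the forward random orbit of a point $y\in V$ \emph{never} enters $\mathrm{supp}(\mu)$, so there is no positive-probability event that transports a neighborhood of $y$ into a contracting neighborhood coming from $\mu$, and the "transport back via continuity" step has nothing to transport. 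Your alternative idea of invoking enough ergodic harmonic measures that their supports jointly cover $K$ also has no reason to hold: the support of a harmonic measure can be a thin minimal set, and the union over all ergodic harmonic measures need not be $K$.

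The paper sidesteps this exactly by \emph{choosing the measure as a function of the base point}. Given $x\in K$, one looks at the $G$-invariant compact set $\overline{\mathcal{O}(x)}$ and picks an ergodic harmonic measure $\mu$ supported inside it. Lemma \ref{localcontrae} produces a point $x'\in\mathrm{supp}(\mu)\subset\overline{\mathcal{O}(x)}$ and a neighborhood $B$ of $x'$ with positive-probability contraction. Since $x'$ lies in the closure of the orbit of $x$, there is a fixed word of length $m$ in the generators that sends a neighborhood $B_x$ of $x$ into $B$; the event that the first $m$ letters equal that word has positive probability. Intersecting with the shift of the contraction event and shrinking the rate from $\lambda$ to $\lambda'<\lambda$ (to absorb the initial $m$ steps, which is the same slack idea you use) gives a positive-probability contraction event on a neighborhood of $x$. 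Compactness of $K$ then finishes. The crucial extra idea, which your proof needs and does not have, is precisely the per-point choice of measure supported in the orbit closure of $x$, which guarantees that a neighborhood of $x$ can be steered into the support of the chosen measure.
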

\begin{proof}
	Let $x\in K$. The closure $\overline{\mathcal{O}(x)}$ of the orbit of $x$, is a $G$-invariant compact subset of $K$. Hence there exists an ergodic harmonic measure $\mu$ which is supported in $\overline{\mathcal{O}(x)}$. Lemma \ref{localcontrae} implies that there exist $x'\in \overline{\mathcal{O}(x)}$, a neighborhood $B$ and a positive number $\lambda$ such that the set
	$$A_1=\left\{\omega\in \Omega,\forall n\geq 0, \forall y,z \in B, d(f_\omega^n(y),f_\omega^n(z))\leq e^{-n\lambda}\right\}$$
	has $\mathbb P$-positive probability. Moreover, since $B\cap \overline{\mathcal{O}(x)}\not=\emptyset$, there exists a neighborhood $B_x$ of $x$ and an integer $m$ so that the set 
	$$A_2=\{\omega\in\Omega|f_\omega^m(B_x)\subset B\}$$
	has positive probability. Then the set $A=A_2 \cap \tau^{-m}(A_1)$ has also positive probability, and for $\omega\in A$ we have
	$$ \forall y,z \in B_x, \forall n\geq m, d(f_\omega^n(y),f_\omega^n(z))\leq e^{-(n-m)\lambda}.$$
	We deduce that if $\lambda'\in (0,\lambda)$, we can find a smaller neighborhood $B_x'$ of $x$  such that
	$$ \forall y,z \in B_x', \forall n\geq 0, d(f_\omega^n(y),f_\omega^n(z))\leq e^{-n\lambda'}.$$
	 We conclude by extracting a finite covering of $\{B_x'\}_{x\in K}$.
\end{proof}
We can finish the proof of Proposition \ref{localcontr}.
\begin{proof}[Proof of Proposition \ref{localcontr}]
	Let $\lambda_0$ be the number given by  Lemma \ref{uniform}. For each $x$ in $K$, let us denote by $A_x$ the set of events $\omega\in\Omega$ such that for every $\lambda<\lambda_0$, there exists a neighborhood $B$ of $x$ such that 
	$$\forall y\in B, \forall n\geq 0, d(f_\omega^n(x),f_\omega^n(y))\leq e^{-n\lambda}.$$
 By Lemma \ref{uniform}, we have $\mathbb{P}(A_x)\geq c$ for some $c>0$ which does not depend on $x$. Moreover, for any $k\geq 0$ one can check that $\tau^k(\omega)\in A_{f_\omega^k(x)}\Rightarrow \omega\in A_x$.  It implies the almost sure inequality $\mathbb{P}(A_x|\mathcal{F}_k)\geq \mathbb{P}(A_{f_\omega^n(x)})\geq c$, where $\mathcal{F}_k$ is the $\sigma$-algebra generated by the $k$ first projections of $\Omega$ (we used the convention  that $\mathbb{P}(A|\mathcal{F})$ is the conditional expectation of the indicatrix function of $A$ with respect to $\mathcal{F}$). Then, the $0-1$ Levy's law says that $\mathbb{P}(A_x|\mathcal{F}_k)$ converges almost surely to the caracteristic function of $A_x$ when $k$ tends to $+\infty$, so we conclude that $A_x$ has full probability.
\end{proof}

\section{Inverse random walk behaviour}
In this section we establish a result about the inverse random walk $f_{\omega_0}\circ\cdots \circ f_{\omega_{n-1}}$. It is actually a more general result, which holds for any random walk satisfying the contraction property proved in the previous section.

\subsection{Statement}
Let $(K,d)$ be a compact metric space, and let $P$ be a probability measure on $\mathrm{Homeo}(K)$. Let $\Omega=\mathrm{Homeo}(K)^\N$ endowed with $\mathbb{P}=P^{\otimes \N}$. For $\omega=(f_0,f_1,\ldots)$ in $\Omega$ we set $f_\omega^n=f_{n-1}\circ\cdots \circ f_0$ and $\bar{f}_{\omega}^n=f_0\circ\cdots\circ f_{n-1}$.

\begin{theorem}\label{inverse}
	We assume that the random walk $(f_\omega^n)_{n \in\N}$ satisfies the following `` fast local contractions property'': for any $x$ in $K$, for $\mathbb{P}$-almost any $\omega$ in $\Omega$, there exists a neighborhood $B$ of $x$ such that $\displaystyle \sup_{y\in B}\sum_{n=0}^{+\infty}d(f_\omega^n(x),f_\omega^n(y))<+\infty$.
	
	Then the inverse random walk $(\bar{f}_{\omega}^n)_{n\in\N}$ satisfies the following ``accumulation property'': there exists an integer $k$ such that for any $x$ in $K$, for $\mathbb{P}$-almost every $\omega$ in $\Omega$, the sequence $(\bar{f}_{\omega}^n(x))_{n\in\N}$ has at most $k$ cluster values.
\end{theorem}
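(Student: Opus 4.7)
The plan is to control the cluster set $C_\omega(x)$ of the sequence $(\bar{f}_\omega^n(x))_n$ via a canonical finite partition of $K$ built from the forward walk's contraction. I would define an equivalence relation $\sim_\omega$ on $K$ by declaring $x \sim_\omega y$ if and only if $\sum_{n \ge 0} d(f_\omega^n(x), f_\omega^n(y)) < +\infty$; the triangle inequality makes this an equivalence relation, and the fast local contraction hypothesis guarantees that every point admits a neighborhood entirely contained in its class, so classes are open and the induced partition of $K$ is finite by compactness. Denote its cardinality by $p(\omega)$. Since $d(f_\omega^n(x), f_\omega^n(y)) = d(f_{\tau\omega}^{n-1}(f_0(x)), f_{\tau\omega}^{n-1}(f_0(y)))$ for $n \ge 1$, one has $x \sim_\omega y \iff f_0(x) \sim_{\tau\omega} f_0(y)$, so the homeomorphism $f_0$ induces a bijection between the partitions associated to $\omega$ and $\tau\omega$; thus $p(\omega) = p(\tau\omega)$, and ergodicity of the shift implies $p(\omega) = k$ almost surely, for some integer $k$.

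Next, I would exploit the cocycle identity $\bar{f}_\omega^{n+m} = \bar{f}_\omega^n \circ \bar{f}_{\tau^n\omega}^m$ and continuity of $\bar{f}_\omega^n$ to obtain $C_\omega(x) = \bar{f}_\omega^n(C_{\tau^n\omega}(x))$ for every $n$. Since $\bar{f}_\omega^n$ is a homeomorphism, $|C_\omega(x)|$ is shift-invariant and hence a.s. constant by ergodicity; call this constant $N(x)$. The goal reduces to showing $N(x) \le k$.

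For the bound $N(x) \le k$, I would argue by contradiction: if $\xi_1, \ldots, \xi_{k+1}$ are distinct cluster points of $(\bar{f}_\omega^n(x))_n$, pigeonhole forces two of them, say $\xi_1$ and $\xi_2$, to lie in a common $\omega$-equivalence class, so that $d(f_\omega^n(\xi_1), f_\omega^n(\xi_2)) \to 0$. Choosing subsequences $n_j < m_j$ with $n_j \to \infty$, $\bar{f}_\omega^{n_j}(x) \to \xi_1$, and $\bar{f}_\omega^{m_j}(x) \to \xi_2$, the cocycle gives $\bar{f}_\omega^{m_j}(x) = \bar{f}_\omega^{n_j}(u_j)$ with $u_j = \bar{f}_{\tau^{n_j}\omega}^{m_j - n_j}(x) \in K$. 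The aim is then to derive $\xi_1 = \xi_2$ from this, contradicting the distinctness assumption.

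The main obstacle is executing this last step: the map $\bar{f}_\omega^n$ has no intrinsic contraction property, so the forward closeness of $\xi_1, \xi_2$ does not automatically constrain the behavior of the inverse walk. A viable workaround is to first uniformize the forward contraction on each equivalence class through a chaining argument over a finite subcover of the class by fast-contracting neighborhoods supplied by the hypothesis, and then to select the times $(n_j, m_j)$ carefully so as to translate this uniform forward control into a statement about $\bar{f}_\omega^{n_j}$ applied to appropriately chosen small sets that contain both $x$ and the auxiliary points $u_j$. This subtle interplay between the forward and inverse dynamics is where the technical core of the proof will lie.
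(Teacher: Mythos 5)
Your plan diverges fundamentally from the paper's, and the step you flag as ``the technical core'' is not a technicality but the entire content of the theorem; as it stands, there is a genuine gap with no evident way to close it. Your construction of the equivalence relation $\sim_\omega$, the finiteness and shift-invariance of $p(\omega)$, and the cocycle identity $C_\omega(x)=\bar f_\omega^n(C_{\tau^n\omega}(x))$ are all sound in spirit (modulo measurability bookkeeping, and modulo the quantifier issue that the hypothesis is ``for each $x$, for a.e.\ $\omega$'' rather than ``for a.e.\ $\omega$, for all $x$,'' which you would need to fix with a countable dense set and a chaining argument before the classes-are-open claim holds on a single full-measure event). But the contradiction you then want to run -- picking cluster points $\xi_1,\xi_2$ in the same $\sim_\omega$-class and forcing $\xi_1=\xi_2$ -- cannot go through pathwise. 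For a \emph{fixed} $\omega$, the forward proximality $d(f_\omega^n(\xi_1),f_\omega^n(\xi_2))\to 0$ and the behaviour of the prefix maps $\bar f_\omega^{n_j}$ are essentially unrelated; the maps $\bar f_\omega^{n_j}$ carry no contraction or equicontinuity, and nothing prevents $u_j=\bar f_{\tau^{n_j}\omega}^{m_j-n_j}(x)$ from being far from $x$ while $\bar f_\omega^{n_j}(u_j)$ and $\bar f_\omega^{n_j}(x)$ land near two distinct limits. No chaining on a finite subcover changes this, because chaining uniformizes the \emph{forward} sums only and gives nothing about $\bar f_\omega^{n_j}$.

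The insight you are missing, and that the paper uses as its central lever, is that you should not try to relate $f_\omega^n$ and $\bar f_\omega^n$ on the same realization $\omega$ but rather in distribution: both are products of $n$ i.i.d.\ increments, so $f_\omega^n$ and $\bar f_\omega^n$ are equal in law. The paper first proves $m$-proximality of $G$ from the hypothesis, then uses a general Markov-chain lemma (Proposition \ref{Markovsum}) to show that $\sum_n\Delta_m(f_\omega^n(x_1),\dots,f_\omega^n(x_m))$ has finite expectation uniformly in the starting points, and finally transfers this to $\E\bigl[\sum_n\Delta_m(\bar f_\omega^n(x_1),\dots,\bar f_\omega^n(x_m))\bigr]<\infty$ via the distributional identity (Lemma \ref{inverselemma}). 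Almost sure finiteness of this sum, applied to a maximal non-proximal set $E$ of cardinal $m-1$, is what forces $(\bar f_\omega^n(x))_n$ to accumulate on the Hausdorff limit of $\bar f_\omega^n(E)$. Your per-$\omega$ argument has no substitute for this passage through expectation and equality of laws, and I do not see a deterministic route to the conclusion.
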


\subsection{A probability statement}
We begin by proving a general statement about Markov chains that we will use.

\begin{proposition}\label{Markovsum}
	Let $(X_n)_{n\in\N}$ be a homogeneous Markov chain  defined on a probability space $(\Omega,\mathcal{F},\mathbb{P})$ and valued on a measurable state space $E$. Let $\phi:E\rightarrow \R_+$ be a measurable function, and let $C,\delta>0$. We assume that $\phi\leq C$ on $E$ and that for every starting point $x$ in $E$,
	$$\mathbb{P}\left(\sum_{n=0}^{+\infty}\phi(X_n)\leq C\Big|X_0=x\right)\geq \delta.$$
	Then $\displaystyle \sum_{n=0}^{+\infty}\phi(X_n)$ is almost surely finite, integrable and
	$$\E\left(\sum_{n=0}^{+\infty}\phi(X_n)\right)\leq \frac{C}{\delta}.$$
\end{proposition}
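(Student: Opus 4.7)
The strategy is to control the tail function $\pi(t) := \sup_{x \in E} \mathbb{P}(S > t \mid X_0 = x)$, where $S := \sum_{n=0}^{+\infty} \phi(X_n)$, and to prove geometric decay on the scale $C$, namely $\pi(kC) \leq (1-\delta)^k$ for every integer $k \geq 0$. Once this is established, integrating the tail against a geometric series will immediately give the integrability of $S$ together with the announced bound $\E[S] \leq C/\delta$, and in particular $S < +\infty$ almost surely.

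The only nontrivial step is the geometric bound, which I plan to prove by induction on $k$ (the case $k = 0$ is trivial). For the inductive step I would introduce the stopping time
$$\tau := \inf \left\{ n \geq 0 : \sum_{j=0}^{n} \phi(X_j) > C \right\}.$$
Since $\phi \leq C$, the event $\{\tau = \infty\}$ coincides exactly with $\{S \leq C\}$, so the hypothesis translates into $\mathbb{P}(\tau < \infty \mid X_0 = x) \leq 1 - \delta$ for every $x$. By definition of $\tau$, on $\{\tau < \infty\}$ we have $\sum_{j < \tau} \phi(X_j) \leq C$, hence $S \leq C + S''$ where $S'' := \sum_{j \geq \tau} \phi(X_j)$. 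The strong Markov property at $\tau$ ensures that, conditionally on $\mathcal{F}_\tau$, the random variable $S''$ has the law of $S$ started at $X_\tau$, so $\mathbb{P}(S'' > (k-1)C \mid \mathcal{F}_\tau) \leq \pi((k-1)C)$ on $\{\tau < \infty\}$. Combining these three observations yields, for every $x \in E$,
$$\mathbb{P}(S > kC \mid X_0 = x) \leq (1-\delta)\, \pi((k-1)C),$$
and taking the supremum over $x$ closes the induction.

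To conclude, since $\pi$ is nonincreasing and $\mathbb{P}(S > t) \leq \pi(t)$ for every $t$, I would bound
$$\E[S] = \int_0^{+\infty} \mathbb{P}(S > t)\, dt \leq \sum_{k=0}^{+\infty} C\, \pi(kC) \leq C \sum_{k=0}^{+\infty} (1-\delta)^k = \frac{C}{\delta}.$$
The main obstacle is really the choice of stopping time in the inductive step: it is essential to restart at $\tau$ itself rather than at $\tau + 1$ so that $S''$ absorbs the overshoot $\phi(X_\tau)$ and the recursion loses only a factor $(1-\delta)$ per step of length $C$; restarting one step later would give a factor $(1-\delta)$ per step of length $2C$ and degrade the final constant. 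Everything else is a routine application of the strong Markov property combined with a geometric summation.
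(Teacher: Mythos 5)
Your argument is correct and, once the paper's abstract Lemma \ref{Markovlemma} is unfolded, it is essentially the same proof: the paper's stopping times $N_{k+1}=\inf\{N\geq N_k:\sum_{j=N_k}^{N}\phi(X_j)>C\}$ are precisely iterates of your $\tau$, the strong Markov estimate $\mathbb{P}(N_{k+1}<\infty\mid N_k<\infty)\leq 1-\delta$ is your inductive step, and both routes conclude with the same geometric series to reach $\E[S]\leq C/\delta$. The only difference is cosmetic --- the paper isolates a general decomposition lemma, counts the number $T$ of blocks, and uses $S\leq TC$ together with $\E[T]\leq 1/\delta$, whereas you run the induction directly on the tail bound $\mathbb{P}(S>kC\mid X_0=x)\leq(1-\delta)^k$. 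Two minor points: the identity $\{\tau=\infty\}=\{S\leq C\}$ does not in fact use $\phi\leq C$ (that hypothesis is automatic anyway, since $S\geq\phi(X_0)$ and $\mathbb{P}(S\leq C\mid X_0=x)\geq\delta>0$ force $\phi(x)\leq C$); and to avoid any measurability concern about $\pi$, it is slightly cleaner to phrase the induction pointwise as ``$\mathbb{P}(S>kC\mid X_0=x)\leq(1-\delta)^k$ for all $x$'' and apply the $(k-1)$-step bound pointwise at $X_\tau$ inside the strong Markov estimate, rather than passing through the supremum.
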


We will deduce the proposition from the following lemma:
\begin{lemma}\label{Markovlemma}
	Let $\hat{F}$ be a set of finite and infinite sequences in $E$ such that:
	\begin{itemize}
		\item every sequence of length $1$ belongs to $\hat{F}$;
		\item
		for any infinite sequence $u=(u_n)_{n\in\N}$ in $E^\N$, if every initial sequence $(u_0,u_1,\ldots,u_n)$ belongs to $\hat{F}$ then so does $u$.
	\end{itemize}     	
	We assume that there exists $\delta>0$ such that for every $x$ in $E$,
	$$\mathbb{P}\left((X_n)_{n\in\N}\in \hat{F}|X_0=x\right)\geq \delta.$$
	Then almost surely $(X_n)_{n\in\N}$ can be decomposed into a finite number $T$ of sequences $(X_0,\ldots,X_{N_1-1})$, $(X_{N_1},\ldots,X_{N_2-1})$,...,$(X_{N_{T-1}},\ldots)$, all of them in $\hat{F}$, where
	$T$ is a random variable in $\N$ such that $\mathbb{P}(T\geq k+1)\leq (1-\delta)^k$ for every integer $k$. 
\end{lemma}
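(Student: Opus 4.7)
The plan is a recursive greedy construction: set $N_{0} = 0$, and given $N_{j}$, define
$$L_{j} \;=\; \inf\bigl\{\ell \geq 1 \,:\, (X_{N_{j}}, X_{N_{j}+1}, \ldots, X_{N_{j}+\ell-1}) \notin \hat{F}\bigr\} \;\in\; \N \cup \{\infty\}.$$
By the first condition on $\hat{F}$, length-one sequences lie in $\hat{F}$, so $L_{j} \geq 2$. If $L_{j} < \infty$, I set $N_{j+1} = N_{j} + L_{j} - 1$, and the block $(X_{N_{j}}, \ldots, X_{N_{j+1}-1})$, of length $L_{j} - 1 \geq 1$, lies in $\hat{F}$ by minimality of $L_{j}$. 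If $L_{j} = \infty$, every finite prefix of the shifted chain $(X_{N_{j}}, X_{N_{j}+1}, \ldots)$ lies in $\hat{F}$, so the second condition on $\hat{F}$ forces the whole shifted sequence itself to lie in $\hat{F}$; I take this as the final (infinite) block and set $T = j + 1$.

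Each $L_{j}$ is a stopping time for the chain shifted by $N_{j}$ (the event $\{L_{j} \leq \ell\}$ depends only on $(X_{N_{j}}, \ldots, X_{N_{j}+\ell-1})$), so by induction each $N_{j}$ is a stopping time for the natural filtration $(\mathcal{F}_{n})$. Applying the strong Markov property at $N_{j}$, the conditional law of $(X_{N_{j}+n})_{n \geq 0}$ given $X_{N_{j}} = x$ is the law of the Markov chain started at $x$, hence
$$\mathbb{P}\bigl(L_{j} = \infty \,\big|\, X_{N_{j}} = x\bigr) \;=\; \mathbb{P}\bigl(\text{every finite prefix of } (X_{n})_{n \geq 0} \text{ lies in } \hat{F} \,\big|\, X_{0} = x\bigr).$$
By the two conditions on $\hat{F}$ together with the natural prefix-closure satisfied in the intended applications, the right-hand event coincides with $\{(X_{n})_{n \geq 0} \in \hat{F}\}$, so the hypothesis yields $\mathbb{P}(L_{j} = \infty \mid X_{N_{j}} = x) \geq \delta$ uniformly in $x$.

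The tail bound then follows by a geometric induction. One has $\{T \geq k+1\} = \{L_{0} < \infty, \ldots, L_{k-1} < \infty\}$, and $\{T \geq k\} = \{N_{k-1} < \infty\}$ is $\mathcal{F}_{N_{k-1}}$-measurable, so
$$\mathbb{P}(T \geq k+1) \;=\; \mathbb{E}\!\left[\mathbf{1}_{\{T \geq k\}}\,\mathbb{P}\bigl(L_{k-1} < \infty \,\big|\, \mathcal{F}_{N_{k-1}}\bigr)\right] \;\leq\; (1-\delta)\,\mathbb{P}(T \geq k),$$
and iterating yields $\mathbb{P}(T \geq k+1) \leq (1-\delta)^{k}$. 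In particular $\mathbb{P}(T = \infty) = 0$, so the decomposition terminates in finitely many blocks almost surely.

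The step that needs most care is the identification $\{L_{j} = \infty\} = \{(X_{N_{j}+n})_{n \geq 0} \in \hat{F}\}$: condition~(2) directly supplies only the inclusion $\subseteq$, and the reverse direction (which equates the greedy termination probability with the hypothesis) relies on $\hat{F}$ being closed under taking prefixes. This prefix-closure is immediate for the $\hat{F}$ arising in Proposition~\ref{Markovsum} (take $\hat{F} = \{(u_{n}) : \sum_{n} \phi(u_{n}) \leq C\}$) and in the use of the lemma within Theorem~\ref{inverse}, so the argument goes through in all intended applications.
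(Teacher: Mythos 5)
Your proof follows the same greedy scheme as the paper's: set $N_0=0$, let the next stopping time record the first failure of the current prefix to lie in $\hat F$, appeal to the strong Markov property, and iterate the geometric bound. The bookkeeping via lengths $L_j$ rather than endpoints $N_k$ is only cosmetic. The caveat you flag at the end is genuine, and in fact it exposes a gap in the paper's own argument. The paper asserts the equality
$$\mathbb{P}\bigl(N_{k+1}<+\infty \mid N_k<+\infty\bigr) \;=\; \mathbb{P}\bigl((X_{N_k+n})_{n\in\N}\notin\hat F \mid N_k<+\infty\bigr),$$
but condition (2) supplies only the inclusion $\{N_{k+1}=\infty\}\subseteq\{(X_{N_k+n})_{n\in\N}\in\hat F\}$, from which one gets $\mathbb{P}(N_{k+1}=\infty\mid N_k<\infty)\leq\mathbb{P}((X_{N_k+n})\in\hat F\mid N_k<\infty)$ --- an upper bound on the quantity one needs to bound from below by $\delta$. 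The missing ingredient is precisely what you identify: $\hat F$ must be closed under passage to finite prefixes, so that membership of the infinite tail in $\hat F$ forces all its prefixes into $\hat F$, hence forces $N_{k+1}=\infty$. That this cannot be waived in the greedy argument is seen by taking $\hat F$ to consist of all length-one sequences together with all infinite sequences: condition (2) holds vacuously and the hypothesis holds with $\delta=1$, yet the greedy scheme produces length-one blocks forever and never terminates (even though $T=1$ is achievable by a different decomposition). As you note, the $\hat F$ actually used in Proposition~\ref{Markovsum}, namely $\{(u_n):\sum_n\phi(u_n)\leq C\}$ with $\phi\geq 0$, is prefix-closed, so both your argument and the paper's are correct where the lemma is applied; but prefix-closure really belongs among the lemma's hypotheses (after which the first bullet becomes automatic whenever $\delta>0$, since some infinite sequence through each $x$ must then lie in $\hat F$).
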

\begin{proof}
	We define an increasing sequence of stopping times $(N_k)$ by induction. We set $N_0=0$ and then, if $N_k<+\infty$ we set $N_{k+1}=\inf\{N\geq N_k|(X_{N_k},\ldots, X_N)\not\in \hat{F}\}$ (so $N_{k+1}>N_k$ by the first property of $\hat{F}$). If $N_k=+\infty$ we stop the sequence. In this way, we obtain a decomposition of $(X_n)_{n\in\N}$ into a finite or infinite number of subsequences $(X_{N_{k-1}},\ldots, X_{N_k-1})$, all of them in $\hat{F}$. (by using the second property of $\hat{F}$ if $N_k=+\infty$).  Let $T$ be this number of subsequences.
	
	By assumption and by the strong Markov property,
	$$\mathbb{P}(N_{k+1}<+\infty|N_k<+\infty)=\mathbb{P}((X_{N_k+n})_{n\in\N}\not\in \hat{F}|N_k<+\infty)\leq 1-\delta,$$
	so $\mathbb{P}(N_k<+\infty)\leq (1-\delta)^k$, and so $\mathbb{P}(T\geq k+1)\leq (1-\delta)^k$. In particular, $T<+\infty$ a.s., and we get the result.
\end{proof}

\begin{proof}[Proof of Proposition \ref{Markovsum}]
	Let $\hat{F}$ be the set of sequences $(u_n)_{n\in I}$ such that $\displaystyle \sum_{n\in I}\phi(u_n)\leq C$. Then with the notations of the lemma, we can almost surely decompose $(X_n)_{n\in\N}$ in $T$ subsequences of the form $(X_n)_{n\in I_j}$ with $\displaystyle \sum_{n\in I_j}\phi(X_n)\leq C$, and so $\displaystyle \sum_{n=0}^{+\infty}\phi(X_n)\leq TC$. Then
	$$\E(T)=\sum_{k=0}^{+\infty}\mathbb{P}(T\geq k+1)\leq \sum_{k=0}^{+\infty}(1-\delta)^k=\frac{1}{\delta}$$
	so
	$$\E\left(\sum_{n=0}^{+\infty}\phi(X_n)\right)\leq \frac{C}{\delta}.$$
\end{proof}
\begin{remark}
	From the proof we actually obtain that $\displaystyle \sum_{n=0}^{+\infty}\phi(X_n)$ has an exponential moment since $\E[e^{\ep T}]<+\infty$ for $\ep$ small.
\end{remark}

\subsection{Proof of Theorem \ref{inverse}}
Let $G$ be the subsemigroup of $\mathrm{Homeo}(K)$  generated by the topological support of $\nu$. We need the following definition of ``$m$-proximality'' of $G$, generalizing the classical definition of proximality.

\begin{definition}
	Two points $x,y$ of $K$ are said to be \emph{proximal} for the action of $G$ if there exists a sequence $(g_n)_{n\in \N}$ in $G$ such that $d(g_n(x),g_n(y))\to 0$ as $n\to +\infty$.
	
	If $m\geq 2$ is an integer, the semigroup $G$ is said to be \emph{$m$-proximal} if any set $E\subset K$ of cardinal $m$ contains  at least two proximal distinct points.
	
	The smallest integer $m$ satisfying the above condition, if it exists, is called \emph{degree of proximality of $G$}.
\end{definition}

The classical definition of proximality of $G$ (every points $x$, $y$ are proximal) corresponds to the case $m=2$. The interest of this generalization is that, as we are going to see, under the assumption of the theorem the group $G$ is necessarily $m$-proximal for some $m$. Then we will prove that the conclusion of the theorem holds for $k=m-1$. In particular, if $G$ is proximal we obtain that for any $x$ in $K$ and $\mathbb{P}$-almost every $\omega$ in $\Omega$ the sequence $(\bar{f}_{\omega}^n(x))_{n\in\N}$ converges.

\begin{lemma}
	If the assumption of Theorem \ref{inverse} is satisfied, then there exists $m$ in $\N$ such that $G$ is $m$-proximal.
\end{lemma}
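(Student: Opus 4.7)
The plan is to produce a finite open cover $\{V_1,\ldots,V_p\}$ of $K$ such that for each $V_i$ the event $\Omega_i = \{\omega : \mathrm{diam}(f_\omega^n(V_i)) \to 0 \text{ as } n \to \infty\}$ has positive $\mathbb{P}$-probability. Once this is done, the pigeonhole principle forces any $(p+1)$-element subset of $K$ to contain two distinct points lying in a common $V_i$, and such points will be proximal via the elements $f_\omega^n \in G$ for $\omega \in \Omega_i$, which yields $m$-proximality for $m = p+1$.

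The first and most delicate step is, for a fixed $x \in K$, to produce a \emph{deterministic} open neighborhood $V_x$ of $x$ whose contraction event has positive probability. To this end, I would consider the nested family of balls $B_k = B(x,1/k)$ for $k \geq 1$. By the fast local contractions hypothesis applied to $x$, for $\mathbb{P}$-almost every $\omega$ there exists a (random) neighborhood $B \ni x$ with $\sup_{y\in B}\sum_n d(f_\omega^n(x),f_\omega^n(y)) < +\infty$; in particular $\mathrm{diam}(f_\omega^n(B_k)) \to 0$ for some $k = k(\omega)$. Since $\Omega_{B_k}$ is increasing in $k$ and the union $\bigcup_k \Omega_{B_k}$ has full probability, monotone convergence gives $\mathbb{P}(\Omega_{B_k}) \to 1$, so it is positive for $k$ large enough. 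Pick such a ball as $V_x$.

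By compactness of $K$ I would then extract a finite subcover $V_{x_1}, \ldots, V_{x_p}$ from the open cover $\{V_x\}_{x \in K}$. Finally, given any $E \subset K$ with $|E| = p+1$, the pigeonhole principle yields distinct $y, z \in E$ lying in a common $V_{x_i}$. For $\omega$ in the positive-probability event $\Omega_{V_{x_i}}$, the maps $g_n := f_\omega^n$ almost surely belong to $G$ (since a.s.\ each $f_{\omega_j}$ lies in the topological support of $P$), and they satisfy $d(g_n(y), g_n(z)) \leq \mathrm{diam}(f_\omega^n(V_{x_i})) \to 0$. Hence $y$ and $z$ are proximal, and $G$ is $(p+1)$-proximal. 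The only real obstacle is the deterministic-versus-random-neighborhood step, which is handled cleanly by the nested-ball monotonicity argument; the rest is compactness and pigeonhole.
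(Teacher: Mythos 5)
Your argument follows essentially the same route as the paper: the paper's proof is the three-sentence observation that the hypothesis gives every point a neighborhood on which all pairs are proximal, then covers $K$ by finitely many such neighborhoods and applies the pigeonhole principle. You spell out the one step the paper leaves implicit — passing from an $\omega$-dependent neighborhood to a deterministic one — and your nested-ball monotonicity argument is the right way to do this. That part is correct: the events $\Omega_{B_k}$ for $B_k=B(x,1/k)$ increase, their union has full measure by the fast-contraction hypothesis, so some $\Omega_{B_k}$ has positive probability.

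One small overclaim: from $\sup_{y\in B}\sum_n d(f_\omega^n(x),f_\omega^n(y))<+\infty$ you cannot in general conclude that $\mathrm{diam}(f_\omega^n(B))\to 0$. Uniform boundedness of the sums $\sum_n a_n(y)$ over $y$ does not imply that $\sup_y a_n(y)\to 0$; the set of $n$ on which $a_n(y)$ is large may depend on $y$. So the event you define, $\Omega_i=\{\omega:\mathrm{diam}(f_\omega^n(V_i))\to 0\}$, is a priori smaller than the event your nested-ball argument actually controls. This is harmless here: the definition of proximality only requires that for each fixed pair $y,z\in V_i$ one has $d(f_\omega^n(y),f_\omega^n(z))\to 0$, and that does follow (for each fixed $y$, the terms of the convergent series $\sum_n d(f_\omega^n(x),f_\omega^n(y))$ tend to $0$, then apply the triangle inequality). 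So if you replace the diameter condition by the pairwise condition $\sup_{y\in V_i}\sum_n d(f_\omega^n(x_i),f_\omega^n(y))<+\infty$ throughout, the argument is airtight and identical in spirit to the paper's.
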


\begin{proof}
	As a consequence of the assumption, every point of $K$ has a neighborhood $B$ such that any points $x,y$ of $B$ are proximal. By compactness of $K$, we can cover $K$ by a finite number $k$ of such neighborhoods $B_1,\ldots, B_k$. Then by the pigeonhole principle, every set of $k+1$ points has at least two points in a same neighborhood $B_i$, so has at least two proximal points. So $G$ is $(k+1)$-proximal.
\end{proof}

We introduce the map $\Delta_m:K^m\rightarrow\R$ defined by $\Delta_m(x_1,\ldots,x_m)=\inf_{i\not= j}d(x_i,x_j)$.

\begin{lemma} \label{positiveprobacontraction} Under the assumption of Theorem \ref{inverse}, let $m$ be such that $G$ is $m$-proximal. Then there exist positive numbers $\delta$ and $C$ such that, for any points $x_1,\ldots,x_m$ of $K$, for a set of words $\omega$ of $\mathbb{P}$-probability larger than $\delta$, we have
	$$\sum_{n=0}^{+\infty}\Delta_m(f_\omega^n(x_1),\ldots,f_\omega^n(x_m))\leq C.$$	
\end{lemma}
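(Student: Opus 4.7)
The strategy is to combine the local fast‑contraction hypothesis (made uniform by compactness of $K$) with the $m$‑proximality of $G$ (used to bring two among any $m$ points into a single contraction region), and finally to make everything uniform in $(x_1,\ldots,x_m)$ by a second compactness argument on $K^m$.

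\textbf{Step 1: uniform local contraction.} By the fast local contraction hypothesis, for every $x\in K$ the event ``there exist $r>0$ and $C_0>0$ with $\sup_{y\in B(x,r)}\sum_n d(f_\omega^n(x),f_\omega^n(y))\le C_0$'' has full probability. Since this event is the increasing union over $C_0,r>0$ of the corresponding events, I can choose $r_x>0$, $C_x>0$ such that the event
$$E_x=\Bigl\{\forall y\in \overline{B(x,r_x)},\ \sum_{n\ge 0} d(f_\omega^n(x),f_\omega^n(y))\le C_x\Bigr\}$$
satisfies $\mathbb{P}(E_x)\ge 1/2$. By compactness of $K$, extract a finite subcover $B(p_1,r_1),\ldots,B(p_k,r_k)$ of $K$ (with $r_i=r_{p_i}/2$, say), denote the resulting balls by $B_1,\ldots,B_k$, and set $C^*=\max_i C_{p_i}$. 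Let $\rho>0$ be a Lebesgue number for this cover. On each event $E_{p_i}$ (probability $\ge 1/2$), for any two points $y,z\in\overline{B_i}$ the triangle inequality gives $\sum_n d(f_\omega^n(y),f_\omega^n(z))\le 2C^*$.

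\textbf{Step 2: using $m$-proximality to enter a ball.} Fix an arbitrary tuple $\underline{x}=(x_1,\ldots,x_m)\in K^m$. By $m$‑proximality, there exist indices $i_1\neq i_2$ and an element $g\in G$ with $d(g(x_{i_1}),g(x_{i_2}))<\rho$; hence both $g(x_{i_1})$ and $g(x_{i_2})$ lie in some ball $B_j$. Write $g=s_{N-1}\circ\cdots\circ s_0$ with each $s_l$ in the topological support of $P$. The cylinder $F_{\underline{x}}=\{f_{\omega_l}=s_l,\ 0\le l<N\}$ has positive probability $\eta_{\underline{x}}:=\prod_l P(\{s_l\})>0$. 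By continuity of $g$, there is an open neighborhood $V_{\underline{x}}$ of $\underline{x}$ in $K^m$ on which the inclusions $g(x_{i_1}'),g(x_{i_2}')\in B_j$ persist. On the event $F_{\underline{x}}\cap \tau^{-N}(E_{p_j})$, which by independence has probability $\ge \eta_{\underline{x}}/2$, the shifted walk $\tau^N\omega$ lies in $E_{p_j}$ and $f_\omega^n=f_{\tau^N\omega}^{n-N}\circ g$ for $n\ge N$. Using $\Delta_m(f_\omega^n(\underline{x}'))\le d(f_\omega^n(x_{i_1}'),f_\omega^n(x_{i_2}'))$, splitting the sum at $n=N$, bounding $\Delta_m\le \mathrm{diam}(K)$ for $n<N$, and applying Step~1 to the pair $(g(x_{i_1}'),g(x_{i_2}'))\in B_j$ for $n\ge N$, we obtain for every $\underline{x}'\in V_{\underline{x}}$
$$\sum_{n=0}^{+\infty}\Delta_m(f_\omega^n(x_1'),\ldots,f_\omega^n(x_m'))\le N\,\mathrm{diam}(K)+2C^* =: C_{\underline{x}}.$$

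\textbf{Step 3: uniformity via compactness of $K^m$.} The family $(V_{\underline{x}})_{\underline{x}\in K^m}$ is an open cover of the compact space $K^m$; extract a finite subcover $V_{\underline{x}^{(1)}},\ldots,V_{\underline{x}^{(L)}}$. Setting
$$\delta=\min_{1\le \ell\le L}\eta_{\underline{x}^{(\ell)}}/2>0,\qquad C=\max_{1\le\ell\le L}C_{\underline{x}^{(\ell)}},$$
we see that every $\underline{x}\in K^m$ lies in some $V_{\underline{x}^{(\ell)}}$, and the associated event from Step~2 gives the required conclusion.

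The main delicate point is uniformity in $\underline{x}$: the product $g$ (and hence $N$ and the cylinder probability) depends on the tuple and is not bounded a priori. This is handled by the final compactness argument on $K^m$, which reduces to finitely many choices of $g$. The $m$-proximality of $G$ is exactly what allows every tuple to be treated by such a $g$, and the continuity of $g$ is what makes the event ``two coordinates land in the same $B_j$'' robust under small perturbations of the starting tuple.
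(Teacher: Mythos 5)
Your proposal is correct and follows essentially the same three-step strategy as the paper's proof: uniform local contraction with probability at least $1/2$ by compactness of $K$, then $m$-proximality to land two coordinates in a common contraction ball via a cylinder event, then compactness of $K^m$ (made explicit in your Step~3, implicit in the paper's phrase ``by compactness of $K$'') to get a uniform $\delta$ and a uniform bound $k_0$ on the word length. Your version simply spells out the open cover of $K^m$ by the neighborhoods $V_{\underline{x}}$ more carefully than the paper does.
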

\begin{proof}
	As a straightforward consequence of the assumption, for any point of $K$ there is a number $C>0$  and a neighborhood $B$ such that for a set of words $\omega$ of $\mathbb{P}$-probability larger than $\frac{1}{2}$ we have $\displaystyle \sum_{n=0}^{+\infty}d(f_\omega^n(x),f_\omega^n(y))\leq C$ for every $x,y$ in $B$. By compactness of $K$, we deduce that there exist $C>0$ and $\ep>0$ such that, for any $x$, $y$ in $K$ with $d(x,y)\leq \ep$, there is a set of words $\omega$ of $\mathbb{P}$-probability larger than $\frac{1}{2}$ such that $\displaystyle \sum_{n=0}^{+\infty}d(f_\omega^n(x),f_\omega^n(y))\leq C$.
	
	Since $G$ is $m$ proximal, for any $x_1,\ldots,x_m$ in $K$, there exist two indices $i, j$, an integer $k$ and a positive probability set of words $\omega$ such that $d(f_\omega^k(x_i),f_\omega^k(x_j))\leq \ep$. By compactness of $K$, there exist $\delta>0$ and an integer $k_0$ such that, for any points $x_1,\ldots,x_m$ of $K$, there exist a set of words $\omega$ of probability larger than $\delta$, two indices $i, j$ and an integer $k \leq k_0$. such that $d(f_\omega^k(x_i),f_\omega^k(x_j))\leq \ep$.
	
	Combining the two above arguments, for $x_1,...,x_m$ in $K$ there exist a set of words $\omega$ of probability larger than $\delta'=\frac{\delta}{2}$, two indices $i, j$ and an integer $k \leq k_0$ such that $\displaystyle \sum_{n=0}^{+\infty}d(f_\omega^{k+n}(x_i),f_\omega^{k+n}(x_j))\leq C$. Thus, $\displaystyle \sum_{n=0}^{+\infty}d(f_\omega^{n}(x_i),f_\omega^{n}(x_j))\leq C+ k \text{ diam}(K)$ and so for this set of words $\omega$,
	$$\sum_{n=0}^{+\infty}\Delta_m(f_\omega^n(x_1),\ldots,f_\omega^n(x_m))\leq C'$$
	where $C'=C+ k_0 \text{ diam}(K)$.
\end{proof}

We can deduce now a convergence result for the inverse random walk.
\begin{lemma}\label{inverselemma}
	Under the assumption of Theorem \ref{inverse}, let $m$ be such that $G$ is $m$-proximal. There exists a constant $C$ such that for every $x_1,\ldots,x_m$ in $K$,
	$$\E\left[\sum_{n=0}^{+\infty}\Delta_m(\bar{f}_\omega^n(x_1),\ldots,\bar{f}_\omega^n(x_m))\right]\leq C.$$
	In particular, the series $\sum_n \Delta_m(\bar{f}_\omega^n(x_1),\ldots,\bar{f}_\omega^n(x_m))$ converges for $\mathbb{P}$-almost every $\omega$.
\end{lemma}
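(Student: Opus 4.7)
The plan is to reduce the estimate for the inverse walk $\bar f_\omega^n$ to the analogous estimate for the forward walk $f_\omega^n$, and then to apply Proposition \ref{Markovsum} to a Markov chain built from the forward walk.

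First, I would exploit the distributional symmetry of iid sequences. For each fixed $n$, the $n$-tuple $(f_0,\ldots,f_{n-1})$ has the same distribution as $(f_{n-1},\ldots,f_0)$, so the compositions $\bar f_\omega^n = f_0\circ\cdots\circ f_{n-1}$ and $f_\omega^n = f_{n-1}\circ\cdots\circ f_0$ have the same law. (This equality is only marginal, in general not as processes, but it is all we need after separating the sum from the expectation.) Hence for every $n$,
$$\E\bigl[\Delta_m(\bar f_\omega^n(x_1),\ldots,\bar f_\omega^n(x_m))\bigr] = \E\bigl[\Delta_m(f_\omega^n(x_1),\ldots,f_\omega^n(x_m))\bigr],$$
and by Tonelli it suffices to prove the same estimate for the forward walk.

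Next, I would set $X_n = (f_\omega^n(x_1),\ldots,f_\omega^n(x_m)) \in K^m$. Since $f_{\omega_n}$ is independent of $\mathcal{F}_n$, the sequence $(X_n)$ is a homogeneous Markov chain on $E=K^m$, with transition corresponding to applying the diagonal action of an element drawn from $P$. Take $\phi = \Delta_m$; this is bounded by $\mathrm{diam}(K)$ on $K^m$. Lemma \ref{positiveprobacontraction} provides positive constants $\delta$ and $C_0$ such that from any starting state $(x_1,\ldots,x_m)\in K^m$, the event $\{\sum_{n\geq 0}\phi(X_n)\leq C_0\}$ has probability at least $\delta$. Setting $C := \max(C_0, \mathrm{diam}(K))$ ensures both hypotheses of Proposition \ref{Markovsum} hold simultaneously, and that proposition yields
$$\E\Bigl[\sum_{n=0}^{+\infty}\Delta_m(f_\omega^n(x_1),\ldots,f_\omega^n(x_m))\Bigr] \leq \frac{C}{\delta},$$
uniformly in $(x_1,\ldots,x_m)$. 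Combined with the distributional identity above, this gives the desired bound for the inverse walk with the constant $C/\delta$.

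Finally, the "in particular" assertion is immediate: any nonnegative random variable with finite expectation is almost surely finite, so $\mathbb{P}$-a.s.\ the series $\sum_n \Delta_m(\bar f_\omega^n(x_1),\ldots,\bar f_\omega^n(x_m))$ converges. There is no real obstacle to this proof; the only point that requires a little care is remembering that one must not try to identify the two processes $(f_\omega^n)_n$ and $(\bar f_\omega^n)_n$, only their one-dimensional marginals, which is exactly the level at which the sum-swap is performed.
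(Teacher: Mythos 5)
Your proof is correct and takes essentially the same approach as the paper: apply Proposition \ref{Markovsum} to the Markov chain $X_n=(f_\omega^n(x_1),\ldots,f_\omega^n(x_m))$ with $\phi=\Delta_m$, verify the hypothesis via Lemma \ref{positiveprobacontraction}, and transfer the resulting bound to the inverse walk term-by-term using the equality in law of $f_\omega^n$ and $\bar f_\omega^n$. The only difference is presentational (you state the distributional reduction first and are slightly more explicit about choosing a single constant $C$ serving both roles in Proposition \ref{Markovsum}), which is harmless.
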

\begin{proof}
We use the general probability result stated in Proposition \ref{Markovsum} with $E=K^m$, $X_n=(f_\omega^n(x_1),\ldots,f_\omega^n(x_m))$, $\phi=\Delta_m$. The assumption is satisfied by Lemma \ref{positiveprobacontraction}, and the conclusion gives a constant $C$ which does not depend on $x_1,\ldots,x_m$ such that
	$$\E\left[\sum_{n=0}^{+\infty}\Delta_m(f_\omega^n(x_1),\ldots,f_\omega^n(x_m))\right]\leq C.$$
Since $\omega \mapsto f_\omega^n$ and $\omega \mapsto \bar{f}_\omega^n$ have the same distribution, 
 $$\E \left[\Delta_m(\bar{f}_\omega^n(x_1),\ldots,\bar{f}_\omega^n(x_m)) \right]=\E \left[\Delta_m(f_\omega^n(x_1),\ldots,f_\omega^n(x_m))\right],$$ and the result follows.
\end{proof}

We are ready to prove the main result.
\begin{proof}[Proof of Theorem \ref{inverse}]
	Let $m$ be the degree of proximality of $G$. Let $E$ be a set of cardinal $m-1$ without proximal points. For any $\omega$ in $\Omega$, let us set $E_n(\omega)=\bar{f}_\omega^n(E)$. We are going to prove the following statements.
	\begin{enumerate}
		\item For any $x$ in $K$, for $\mathbb{P}$-almost every $\omega$ in $\Omega$, $d(\bar{f}_\omega^n(x),E_n(\omega))\to 0$ as $n\to +\infty$.
		\item For $\mathbb{P}$-almost every $\omega$ in $\Omega$, the sequence of sets $(E_n(\omega))_{n\in\N}$ converges for the Hausdorff metric on $K$.
	\end{enumerate}
	We recall that the Hausdorff metric is defined on the space of compact subsets of $K$ by $d_{H}(A,B)=\max(\sup_{x\in A}d(x,B),\sup_{y\in B}d(y,A))$.
	
	These statements obviously imply the wanted result since for any $x$ in $K$ and $\mathbb{P}$-almost every $\omega$ in $\Omega$, the set of cluster values of $(\bar{f}_\omega^n(x))_{n\in\N}$ is contained in the set $\displaystyle E_{\infty}(\omega)=\lim_{n\to+\infty} E_n(\omega)$, which has cardinal $m-1$.
	
	Let us begin by proving the first statement. Let us write $E=\{x_1,\ldots,x_{m-1}\}$, and let $x\in K$. Applying Lemma \ref{inverselemma} with $x_m=x$, we obtain that, for $\mathbb{P}$-almost every $\omega$,
	$$\sum_{n=0}^{+\infty}\Delta_m(\bar{f}_\omega^n(x_1),\ldots,\bar{f}_\omega^n(x_{m-1}),\bar{f}_\omega^n(x))<+\infty.$$
	Since $E$ does not contain any pair of proximal points, there exists $\ep_0>0$ such that  
	\begin{equation}\label{min}
	\forall g\in G,\forall x,y\in E, x\not=y \Rightarrow d(g(x),g(y))\geq \ep_0.  \end{equation}
	We deduce that except for a finite number of integers $n$ we have $$\Delta_m(\bar{f}_\omega^n(x_1),\ldots,\bar{f}_\omega^n(x_{m-1}),\bar{f}_\omega^n(x))=\inf_{1\leq i\leq m-1} d(\bar{f}_\omega^n(x),\bar{f}_\omega^n(x_i))=d(\bar{f}_\omega^n(x),E_n(\omega)),$$
	so that, for $\mathbb{P}$-almost every $\omega$,
	$$\sum_{n=0}^{+\infty}d(\bar{f}_\omega^n(x),E_n(\omega))<+\infty.$$
	In particular, $d(\bar{f}_\omega^n(x),E_n(\omega))\to 0$ as $n\to +\infty$.
	
	Let us prove the second statement. For $x$ in $K$, we apply this time Lemma \ref{inverselemma} with $x_m=f(x)$ where $f\in \mathrm{Homeo}(K)$ is integrated over $P$. Noting that $$\E[\Delta_m(\bar{f}_\omega^n(x_1),\ldots,\bar{f}_\omega^n(x_{m-1}),\bar{f}_\omega^n(f(x)))]
	=\E[\Delta_m(\bar{f}_\omega^n(x_1),\ldots,\bar{f}_\omega^n(x_{m-1}),\bar{f}_\omega^{n+1}(x))],$$ we deduce as above that, for $\mathbb{P}$-almost every $\omega$,
	$$\sum_{n=0}^{+\infty}\Delta_m(\bar{f}_\omega^n(x_1),\ldots,\bar{f}_\omega^n(x_{m-1}),\bar{f}_\omega^{n+1}(x))<+\infty,$$
	and then by using that $E$ does not contain any proximal point,
	$$\sum_{n=0}^{+\infty}d(\bar{f}_\omega^{n+1}(x),E_n(\omega))<+\infty.$$
	We fix $\omega$ such that this property holds for every $x$ in $E$. To simplify notation, we write $E_n$ instead of $E_n(\omega)$. Since $\displaystyle \sup_{x\in E} d(\bar{f}_\omega^{n+1}(x),E_n)=\sup_{x\in E_{n+1}} d(x,E_n)$ we obtain
	$$\sum_{n=0}^{+\infty}\sup_{x\in E_{n+1}} d(x,E_n)<+\infty.$$
	
	We write $d_H(E_n,E_{n+1})=\max(A_n,B_n)$ with 
	$$A_n=\sup_{x\in E_{n+1}} d(x,E_n) \mbox{ , } B_n=\sup_{x\in E_{n}} d(x,E_{n+1}).$$
	We know that $\displaystyle \sum_{n=0}^{+\infty}A_n<+\infty$. Moreover, by definition of $A_n$, for any integer $n$, there exists a map $\sigma:E_{n+1}\rightarrow E_n$ such that $d(x,\sigma(x))\leq A_n$ for every $x\in E_{n+1}$. Then, either $\sigma$ is bijective and $B_n=A_n=d_H(E_n,E_{n+1})$, or $\sigma$ is not and then there exist distinct points $x,y$ of $E_{n+1}$ such that $d(x,y)\leq 2 A_n$. In the latter case, $A_n\geq \frac{\ep_0}{2}$, where $\ep_0$ is defined by (\ref{min}). This second option only happens for a finite number of $n$ so finally
	$\sum_{n=0}^{+\infty}d_H(E_n,E_{n+1})<+\infty$. Thus $(E_n)_{n\in\N}$ is a Cauchy sequence and hence converges since $d_H$ is complete.
	
\end{proof}

\section{Global contractions}
In this section we come back in the context of Section \ref{pre}: $K$ is a compact subset of $\R$ and $G$ is a finitely generated subgroup of locally monotonic homeomorphisms of $K$. We prove a result about the behaviour of the sequence of homeomorphisms $(f_\omega^n)_{n\geq 0}$ for a typical event $\omega$. For random walks on the circle, almost surely there is a fixed finite number of repellor points, so that $(f_\omega^n)_{n\geq 0}$ contracts every arc which does not contains any of these repellors, see \cite{Mal}. We obtain an analogous statement here, though the global behaviour is not as easy to understand, and the proof is harder because of the lack of continuity. Precisely, the goal of this section is to prove the following result.

\begin{proposition}\label{globalcontr}
	Suppose that the action of group $G$ on $K$ does not admit any invariant probability measure. Then there exist positive numbers $\lambda$ and $C$ and an integer $p$ such that the following statement holds. For $\mathbb{P}$-almost every $\omega$ in $\Omega$, there exists a finite finite set $F$ of cardinal less than $p$ such that for any neighborhood $A$ of $F$, for any large enough $n$, the set $f_\omega^n(K\backslash A)$ is contained in a union of $p$ balls of radius $e^{-n\lambda}$.
\end{proposition}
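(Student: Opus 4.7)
The plan is to combine the local contractions of Proposition \ref{localcontr} with the accumulation property of Theorem \ref{inverse}, applied to an inverse random walk, and then to analyze the break points of $g_n=f_\omega^n$ in order to glue these local contractions into a global one outside a small finite exceptional set.

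First, since $G^{-1}=G$ and the no-invariant-measure assumption is symmetric under inversion, Proposition \ref{localcontr} applies also to the random walk generated by $\check P$ on $S^{-1}$; setting $\tilde f_\omega^n=f_{n-1}^{-1}\circ\cdots\circ f_0^{-1}$, this supplies the fast-local-contractions hypothesis of Theorem \ref{inverse} for $\tilde f$. One then obtains the accumulation property for the reverse walk $h_\omega^n:=f_0^{-1}\circ\cdots\circ f_{n-1}^{-1}$; inspecting the proof of Theorem \ref{inverse}, the cluster values of every sequence $(h_\omega^n(x))_n$ in fact lie in a common finite set $F(\omega)=E_\infty(\omega)$ of cardinality at most $m-1$, where $m$ denotes the degree of proximality of $G$.

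Second, the elementary inclusion $\mathrm{break}(f\circ h)\subseteq\mathrm{break}(h)\cup h^{-1}(\mathrm{break}(f))$ gives by induction
\[
\mathrm{break}(g_n)\subseteq\bigcup_{k=0}^{n-1}h_\omega^k(C),\qquad C:=\bigcup_{s\in S}\mathrm{break}(s).
\]
Since $C$ is finite and each $(h_\omega^k(c))_k$ has cluster values in $F$, for any neighborhood $A$ of $F$ only finitely many $h_\omega^k(c)$ lie outside $A$, so for $n$ large the set $\mathrm{break}(g_n)\cap(K\setminus A)$ stabilizes to a fixed finite set $\Sigma(\omega,A)$. Choosing $A$ as a disjoint union of small open intervals about the points of $F$, the complement $K\setminus A$ splits into at most $|F|+1\le m$ maximal order-pieces; subdividing further by the break points in $\Sigma(\omega,A)$ yields subpieces on which $g_n$ is regular (Proposition \ref{regularsegment}), hence monotonic. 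Using an upgrade of Proposition \ref{localcontr}---almost surely every $x\in K$ admits a $g_n$-contracted neighborhood, obtained from the original statement by a density argument---we cover each subpiece by finitely many such neighborhoods; chaining their images via the monotonicity of $g_n$, the image of each subpiece is an interval of diameter $\le N(\omega)\,e^{-n\lambda}$, which for $n$ large is bounded by $e^{-n\lambda'}$ for any $\lambda'<\lambda$.

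The hard part will be extracting a \emph{uniform} integer $p$ bounding the number of balls independently of $\omega$ and of how small $A$ is chosen: the cardinality of $\Sigma(\omega,A)$---and hence the number of subpieces---may grow with $\omega$ and as $A$ shrinks. I expect the resolution to come from a dual analysis on the image side, by running the break-point argument now for $g_n^{-1}=\tilde f_\omega^n$: subpieces separated in the domain by a break point that has already been absorbed into $A$ necessarily have images which are close in $K$, and by the same proximality mechanism these images cluster into at most $p$ balls with $p$ depending only on $m$ and $|C|$. Making this image-side clustering precise is the delicate step on which I would concentrate most of the effort.
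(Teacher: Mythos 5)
Your plan correctly identifies the role of Theorem~\ref{inverse} applied to the inverse walk in controlling the break points of $g_n=f_\omega^n$, and the decomposition of $K\setminus A$ into order-pieces on which $g_n$ is regular; this is indeed the first half of the paper's strategy (Lemma~\ref{regular}). However, there are two genuine gaps, and both are plugged in the paper by a single tool that your proposal never introduces: the dichotomy of Lemma~\ref{synchro}, which says that for a.e.\ $\omega$ and any fixed pair $x,y\in K$, the trajectories $f_\omega^n(x),f_\omega^n(y)$ either synchronize exponentially fast or eventually stay at distance $\geq\delta$ for some uniform $\delta>0$.

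The first gap is the claimed ``upgrade of Proposition~\ref{localcontr} by a density argument'' to the statement that almost surely \emph{every} $x\in K$ has a $g_n$-contracted neighborhood. This is simply false: Proposition~\ref{localcontr} has the quantifiers in the order ``for each $x$, for a.e.\ $\omega$'', and swapping them costs something. Even away from break points there are genuine \emph{repulsor} points, where no neighborhood is contracted, and these do exist in general. What the paper proves (Proposition~\ref{semilocalcontr}) is that a.s.\ the set of points that are \emph{not} eventually contracted is finite, with a uniform bound: break points are handled exactly as you do, but repulsors are handled separately via Lemma~\ref{synchro}. Once you know that any pair of nearby dense points either synchronize or eventually stay $\delta$-apart, you can show that each regular non-attracting point forces a $\delta$-gap in $f_\omega^n(K)$ for large $n$, and compactness of $K$ caps the number of such points by $\mathrm{diam}(K)/\delta$ (Lemma~\ref{repulsors}). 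Without this dichotomy, the chaining of local contractions along a subpiece could fail at a repulsor, and there is no bound on how many of them there are.

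The second gap is the uniform $p$. You correctly flag this as the hard point, but the proposed ``dual break-point analysis on the image side'' does not work and is not what the paper does: absorbing a break point into $A$ says nothing about the proximity of the images of the two adjacent subpieces, so there is no a priori clustering of images. The actual argument in the paper's proof of Proposition~\ref{globalcontr} again uses Lemma~\ref{synchro}: cover $L=K\setminus A$ by finitely many contracting balls $B_i$ with base points $x_i$ in a fixed countable dense set $D$; by the dichotomy, for $n\geq n_0$ any pair $f_\omega^n(x_i),f_\omega^n(x_j)$ is either $\leq e^{-n\lambda}$ or $\geq\delta$ apart; so at time $n_0$ one groups the $y_i=f_\omega^{n_0}(x_i)$ into at most $m_0$ clusters of diameter $<\delta$, where $m_0$ is the $\delta$-covering number of $K$ (a constant independent of $\omega$ and of $A$); within each cluster the base points are forced to synchronize, and the images of the corresponding union of balls contract to a single small ball. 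The resulting $p=m_0$ is uniform. Your guessed dependence on $m$ and $|C|$ is also not what emerges. In short, the proposal is missing the dichotomy lemma, and both the repulsor bound and the uniform ball count hinge on it.
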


Note that this proposition is stronger than Proposition \ref{globalcontrcorollary}, and so its proof will finish the proof of Theorem \ref{Tits} as well. To prove the proposition,  we are going to study the obstructions to the contracting behavior of $(f_\omega^n)_n$, and divide it into two types: the \textit{break points}, where the monotonicity of $f_\omega^n$ breaks, and the \textit{repellor points}, where $f_\omega^n$ sends small neighborhoods into a large set. We will give precise definitions later. As a tool to prove this proposition, we also establish the following dichotomy phenomenon: given two initial conditions and a typical event, the two corresponding trajectories either remain far away or synchronize exponentially fast. This dichotomy phenomenon is described in the next section. In the next two sections, we study the break points and the repellor points and prove that these two type of points mainly localize around a finite set. In a final section we combine all these arguments to prove Proposition \ref{globalcontr}.

\begin{remark}
Let us emphasize that the global behavior described by Proposition \ref{globalcontr} does not stricto sensu implies the local behavior described by Proposition \ref{localcontr}. It would be the case if we additionally state that for any $x$ in $K$, the random set $F$ given by Proposition  \ref{globalcontr} satisfies $\mathbb{P}(x\in F)=0$. Conversely, Proposition \ref{globalcontr} implies that we can chose $F$ such that this statement is true.
\end{remark}

\subsection{The dichotomy}

With technics similar to those that we used to deduce Proposition \ref{localcontr} from Lemma \ref{uniform}, we will prove the following lemma.

\begin{lemma} \label{synchro}
	Suppose the action of the group $G$ on $K$ has no invariant measure. Then there exist $\delta >0$ and $\lambda>0$ such that for any points $x$ and $y$ of $K$, for $\mathbb P$-almost every $\omega$ in $\Omega$, there exists a rank $n_0$ such that one of the two following assertions holds:
	\begin{enumerate}
		\item $\forall n\geq n_0, d(f^{n}_{\omega}(x),f^{n}_{\omega}(y)) \geq \delta$;
		\item  $\forall n\geq n_0, d(f^{n}_{\omega}(x),f^{n}_{\omega}(y))\leq e^{-n\lambda}$.
	\end{enumerate}
\end{lemma}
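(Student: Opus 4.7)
The plan is to mimic the derivation of Proposition \ref{localcontr} from Lemma \ref{uniform}, but tracking pairs of initial conditions instead of single points. Write $\lambda_0 > 0$ and $\{B_k\}_{1 \leq k \leq p}$ for the constant and finite open cover produced by Lemma \ref{uniform}; let $C_k \subset \Omega$ be the event appearing in that lemma, of $\mathbb{P}$-probability $\geq c_k > 0$, and set $c_0 = \min_k c_k$. Let $\delta > 0$ be a Lebesgue number of the cover $\{B_k\}$, so that any two points of $K$ at distance less than $\delta$ lie together in at least one $B_k$. I will prove the lemma with this $\delta$ and with $\lambda = \lambda_0/2$.

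For each pair $(x,y) \in K^2$, introduce the event $A_{x,y} \subset \Omega$ defined as the union of the \emph{separation} event $S_{x,y} = \{\omega : \exists n_0,\ \forall n \geq n_0,\ d(f_\omega^n(x), f_\omega^n(y)) \geq \delta\}$ and the \emph{synchronization} event $\Sigma_{x,y} = \{\omega : \exists C > 0,\ \forall n \geq 0,\ d(f_\omega^n(x), f_\omega^n(y)) \leq C e^{-n\lambda_0}\}$. Note that $\Sigma_{x,y}$ implies option $2$ of the lemma past a random threshold (since $\lambda < \lambda_0$), while $S_{x,y}$ is precisely option $1$. The first step is to show $\mathbb{P}(A_{x,y}) \geq c_0$ uniformly in $(x,y)$. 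For this I introduce the stopping time $N(\omega) = \inf\{n \geq 0 : d(f_\omega^n(x), f_\omega^n(y)) < \delta\} \in \mathbb{N} \cup \{+\infty\}$. On $\{N = +\infty\}$, the event $S_{x,y}$ holds with $n_0 = 0$. On $\{N < +\infty\}$, I set $k(\omega) = \min\{k : \{f_\omega^N(x), f_\omega^N(y)\} \subset B_k\}$, which is $\mathcal{F}_N$-measurable; the strong Markov property then yields $\mathbb{P}(\tau^N \omega \in C_{k(\omega)} \mid \mathcal{F}_N) \geq c_0$, and on $\{\tau^N \omega \in C_{k(\omega)}\}$ the defining inequality of $C_{k(\omega)}$, applied to the pair $(f_\omega^N(x), f_\omega^N(y)) \in B_{k(\omega)}$, gives $d(f_\omega^n(x), f_\omega^n(y)) \leq e^{-(n-N)\lambda_0}$ for all $n \geq N$, placing $\omega$ in $\Sigma_{x,y}$ with $C = e^{N\lambda_0}$. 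The two cases combine to give the uniform bound $\mathbb{P}(A_{x,y}) \geq c_0$.

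The second step upgrades this to full probability by a Lévy $0$-$1$ argument, exactly as at the end of the proof of Proposition \ref{localcontr}. The event $A_{x,y}$ is shift-covariant: if $\tau^n \omega \in A_{f_\omega^n(x), f_\omega^n(y)}$, then $\omega \in A_{x,y}$. For the $S$-alternative this is immediate, and for the $\Sigma$-alternative a bound $d(f_{\tau^n \omega}^m(u), f_{\tau^n \omega}^m(v)) \leq C e^{-m\lambda_0}$ with $u = f_\omega^n(x)$, $v = f_\omega^n(y)$ translates to $d(f_\omega^k(x), f_\omega^k(y)) \leq C e^{n\lambda_0} e^{-k\lambda_0}$ for $k \geq n$, the finitely many values $k < n$ being absorbed into the constant using $\mathrm{diam}(K) < +\infty$. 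Since $\tau^n \omega$ is $\mathbb{P}$-distributed and independent of $\mathcal{F}_n$, the uniform lower bound applied to the pair $(f_\omega^n(x), f_\omega^n(y))$ yields $\mathbb{P}(A_{x,y} \mid \mathcal{F}_n) \geq c_0$ almost surely, and Lévy's $0$-$1$ law forces $\mathbf{1}_{A_{x,y}} \geq c_0 > 0$ a.s., hence $\mathbb{P}(A_{x,y}) = 1$.

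The main obstacle is the careful handling of the strong Markov property in the first step: the hitting time $N$ and the index $k(\omega)$ are both random, so one must verify the measurability of $k(\omega)$ and apply the strong Markov conditioning uniformly over the finitely many possible values of $k$. This is essentially bookkeeping, but it is where the single-point argument of Proposition \ref{localcontr} genuinely needs to be extended to handle two trajectories at once.
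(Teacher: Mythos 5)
Your proof is correct and follows essentially the same route as the paper's: define the union of a separation event and a synchronization event, show it has uniform positive probability via a hitting time of the small-distance regime together with Lemma~\ref{uniform} (the strong Markov step), and then upgrade to full probability with the shift-covariance of the event and Lévy's $0$-$1$ law. The only cosmetic differences are in notation (your $A_{x,y}$ is the paper's $C_{x,y}$, and you phrase the synchronization event with a free multiplicative constant $C$ rather than a random starting rank $n_0$, which is equivalent once $\lambda < \lambda_0$ is chosen).
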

\begin{proof}
	 Fix $\lambda_0$ and a covering $\{B_k\}$ given by Lemma \ref{uniform}. Let $\delta>0$ so that any ball of radius $\delta$ is contained in a set $B_k$. Then, for $x,y$ in $K$, we set 
	$$\left\{\begin{array}{l}A_{x,y}=\{\omega\in\Omega,\forall \lambda<\lambda_0, \exists n_0\in \N, \forall n\geq n_0, d(f_\omega^n(x),f_\omega^n(y))\leq e^{-n\lambda}\},\\
	B_{x,y}=\{\omega\in \Omega, \exists n_0\in \N, \forall n\geq n_0, d(f_\omega^n(x),f_\omega^n(y))\geq \delta\},\\
	C_{x,y}=A_{x,y}\cup B_{x,y}.\end{array}\right.$$
	By Lemma \ref{uniform}, if $d(x,y)\leq \delta$ then $x,y$ both belong to some $B_k$. In this case, $\mathbb{P}(A_{x,y})\geq c$, where $c$ is a constant. Given $x,y$ in $K$, let us consider the stopping time 
	$$T=\inf\{n\geq 0| d(f_\omega^n(x),f_\omega^n(y))\leq \delta \}$$
	in $\N\cup\{+\infty\}$. On the one hand, for any $m$ in $\N$, we have 
	$$\mathbb{P}(\omega\in A_{x,y}|T(\omega)=m)\geq c$$ 
	so $\mathbb{P}(A_{x,y}|T<+\infty)\geq c$ and so $\mathbb{P}(C_{x,y}|T<+\infty)\geq c$. On the other hand, $\{T=+\infty\}\subset B_{x,y}$ so $\mathbb{P}(C_{x,y}|T=+\infty)=1\geq c$. thus we conclude that $\mathbb{P}(C_{x,y})\geq c$.
	
	Then, for any $n\geq 0$ we have that $\tau^n(\omega)\in C_{f_\omega^n(x),f_\omega^n(y)}\Rightarrow \omega\in C_{x,y}$,  so $\mathbb{P}(C_{x,y}|\mathcal{F}_n)\geq \mathbb{P}(C_{f_\omega^m(x),f_\omega^m(y)})\geq c$ where $\mathcal{F}_n$ is the $\sigma$-algebra generated by the $n$ first projections of $\Omega$. By the $0-1$ Levy's law, we conclude that $C_{x,y}$ has full probability.
		
\end{proof}

\subsection{Break points}

For a fixed $\omega \in \Omega$, we say that the sequence $(f_\omega^n)_{n\geq 0}$ is regular at $x$ if there exists an interval $I$ containing $x$ in its interior such that, for any $n\geq 0$, $f_\omega^n$ is regular on $I$ in the sense of Proposition \ref{regularsegment}.

\begin{lemma}\label{regular}
	There exists $m$ in $\N$ such that for $\mathbb P$-almost every $\omega$, there exist at most $m$ points where $(f_\omega^n)_{n\geq 0}$ is not regular.
\end{lemma}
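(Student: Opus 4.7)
The plan is to bound the break points of the iterates $f_\omega^n$ by pushing them backwards through the composition and then invoking Theorem \ref{inverse} on the inverse walk. The starting point is the elementary set inclusion
\[ \mathrm{break}(g\circ h)\subset \mathrm{break}(h)\cup h^{-1}(\mathrm{break}(g)) \]
for $g,h\in\mathrm{Homeo}(K)$: if $\{a,b\}$ is a gap pair of $K$ mapped by $g\circ h$ to a non-gap pair, then either $\{h(a),h(b)\}$ is not a gap pair, making $\{a,b\}$ a break pair of $h$, or it is a gap pair, making $\{h(a),h(b)\}$ a break pair of $g$. An immediate induction on $n$ yields
\[ \mathrm{break}(f_\omega^n)\subset \bigcup_{k=0}^{n-1}(f_\omega^k)^{-1}(\mathrm{break}(f_{\omega_k})). \]
Since $S$ is finite, the set $C:=\bigcup_{s\in S}\mathrm{break}(s)$ is a fixed finite subset of $K$, so $B^\omega:=\bigcup_{n\geq 0}\mathrm{break}(f_\omega^n)$ sits inside the countable set $\bigcup_{c\in C,\,k\geq 0}\{(f_\omega^k)^{-1}(c)\}$.

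I next observe that every non-regular point $x$ of $(f_\omega^n)$ is necessarily an accumulation point of $B^\omega$: by definition every interval containing $x$ in its interior must contain both endpoints of some break pair, and since these endpoints are distinct at least one of them lies in $B^\omega\setminus\{x\}$; taking a shrinking sequence of such intervals yields a sequence in $B^\omega\setminus\{x\}$ converging to $x$. Consequently, the number of non-regular points is at most the number of accumulation points of $B^\omega$, which in turn is bounded by the sum over $c\in C$ of the number of cluster values of the sequence $\bigl((f_\omega^k)^{-1}(c)\bigr)_{k\geq 0}$.

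To control these sequences, I will recognize $(f_\omega^k)^{-1}=f_{\omega_0}^{-1}\circ\cdots\circ f_{\omega_{k-1}}^{-1}$ as precisely the inverse random walk, in the sense of Theorem \ref{inverse}, associated to the iid sequence $\tilde f_j:=f_{\omega_j}^{-1}$. Its driving distribution $\tilde P$ is supported on $S^{-1}$, which still generates $G$, and $G$ still admits no invariant probability measure, so Proposition \ref{localcontr} applied to the walk driven by $\tilde P$ provides a uniform exponential local contraction rate for the corresponding forward walk $\tilde f_{n-1}\circ\cdots\circ\tilde f_0$. This exponential rate is summable and so delivers the ``fast local contractions property'' required as input by Theorem \ref{inverse}. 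Theorem \ref{inverse} then yields an absolute integer $k'$ such that, for each fixed $c\in K$ and $\mathbb{P}$-almost every $\omega$, the sequence $\bigl((f_\omega^k)^{-1}(c)\bigr)_{k\geq 0}$ has at most $k'$ cluster values. Intersecting these finitely many full-probability events over $c\in C$ produces the desired bound $m:=|C|\cdot k'$.

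The main obstacle will be the bookkeeping around this inverse direction: one has to check that Proposition \ref{localcontr} applies equally well to the walk driven by $\tilde P$ (its hypothesis, absence of an invariant measure for $G$, is symmetric in $P$ and its pushforward under inversion) and that the composition $(f_\omega^k)^{-1}=f_{\omega_0}^{-1}\circ\cdots\circ f_{\omega_{k-1}}^{-1}$ really matches the convention $\bar f_\omega^k$ from the statement of Theorem \ref{inverse}. Once these identifications are in place, the composition lemma for break points and Theorem \ref{inverse} do essentially all the work.
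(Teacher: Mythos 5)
Your proof is correct and follows essentially the same route as the paper: the same inclusion $\mathrm{break}(g\circ h)\subset\mathrm{break}(h)\cup h^{-1}(\mathrm{break}(g))$ propagated by induction, the same reduction of non-regular points to cluster values of $((f_\omega^k)^{-1}(c))_k$ for $c$ ranging over the finite set of generator break points, and the same application of Theorem \ref{inverse} (with the verification that Proposition \ref{localcontr} supplies the summable contraction rate for the walk driven by the pushforward of $P$ under inversion). Your bookkeeping around the inverse-walk convention and the intermediate observation that non-regular points are accumulation points of $B^\omega$ are a bit more explicit than the paper, but the underlying argument is the same.
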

\begin{proof}
Let us denote by $\Delta$ the union of the break points of all the elements of the generating set $S$ of $G$, which is finite. For $\omega\in \Omega$, let us set
$$\Delta_{\infty}(\omega)=\bigcup_{n\in\N} (f_\omega^n)^{-1}(\Delta).$$
 Using the elementary inclusion $\mathrm{break}(h \circ g)\subset \mathrm{break}(g)\cup g^{-1}(\mathrm{break}(h))$, valid for any $g,h\in \mathrm{Homeo}(K)$, we deduce that
$$ \mathrm{break}(f_\omega^n) \subset \bigcup_{k=1}^{n} (f_\omega^k)^{-1}(\mathrm{break}(f_{\omega_k})),$$
and so all the break points of $f_\omega^n$ for $n\geq 0$ belong to $\Delta_{\infty}(\omega)$. Let us set $F$ to be the set of all the cluster values of the sequences $((f_{\omega}^{n})^{-1}(x))_{n \geq 0}$ for $x\in\Delta$.

If $x\in K-F$, then there exists an interval $I$ containing $x$ in its interior which does not meet $\Delta_{\infty}(\omega)$ except maybe at $x$. Hence, for any $n\geq 0$, $I$ does not contain any break pair of $f_\omega^n$ and, by Proposition \ref{regularsegment}, $f_\omega^n$ is regular on $I$. Thus $(f_\omega^n)$ is regular at any point of $K-F$.

 Noting that $(f_{\omega}^{n})^{-1}=f_0^{-1}\circ\cdots\circ f_{n-1}^{-1}$, the sequence $((f_\omega^n)^{-1})_{n\in\N}$ can be seen as an inverse random walk so that Theorem \ref{inverse} implies. The random walk is generated by the inverse probability $P^{-1}$ on $S^{-1}$ (defined by $P^{-1}(\{s^{-1}\})=P(\{s\})$ for $s\in S$), where $S^{-1}$ still generates $G$ as a group and $P^{-1}$ has total support on $S^{-1}$. So Theorem \ref{localcontr} still applies and the (direct) random walk  $(f_{n-1}^{-1}\circ\cdots\circ f_{0}^{-1})_{n\in\N}$ satisifes the fast local contraction property, and so
 Theorem \ref{inverse} implies that there exists $m$ such that for any $x$ in $K$ and for $\mathbb{P}$ almost every $\omega$, the sequence $((f_\omega^n)^{-1}(x))_{n\in\N}$ has  at most $m$ cluster values. Applying this fact at each point of $\Delta$ we conclude that there exists $m'=m\times \mbox{Card}(\Delta)$ such that $F$ is finite with cardinal less than $m'$ for $\mathbb{P}$ almost every choice of $\omega$.

\end{proof}

\subsection{Repellors}

\begin{lemma}
	There exist positive numbers $\lambda$ and $\delta$ such for $\mathbb P$-almost every $\omega$ in $\Omega$ the following property holds.  For any point $x$ of $K$ where the sequence $(f_\omega^n)_{n\geq 0}$ is regular, one of the following holds.
	\begin{enumerate}
		\item Either there exist a neighborhood $B$ of $x$ and an integer $n_0$ such that $\forall n \geq n_0,  \mbox{diam}(f_\omega^n(B)) \leq e^{-\lambda n}.$
		\item Or for any neighborhood $B$ of $x$, there exists an integer $n_0$ such that $\forall n\geq n_0, \mbox{diam}(f_\omega^n(B))\geq \delta$.
	\end{enumerate}
We say that $x$ is an attractor of $(f_\omega^n)_{n\geq 0}$ if the first case holds and that $x$ is a repellor of $(f_\omega^n)_{n\geq 0}$ if the second case holds.
\end{lemma}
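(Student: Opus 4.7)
The plan is to combine the dichotomy of Lemma \ref{synchro} with the monotonicity afforded by regularity, reducing to a countable family of pairs. Let $\delta,\lambda>0$ be the constants given by Lemma \ref{synchro}. Let $D$ be a countable subset of $K$ which is dense in $K$ and which contains every point of $K$ that is an endpoint of a gap of $K$ (there are only countably many gaps, hence countably many such endpoints). Apply Lemma \ref{synchro} to each pair in $D\times D$ and take the countable intersection of the resulting full-measure sets to get a set $\Omega_0\subset\Omega$ of probability $1$ on which, for every $(y,z)\in D\times D$, either $d(f_\omega^n(y),f_\omega^n(z))\geq\delta$ for all large $n$ (call such a pair \emph{separating}) or $d(f_\omega^n(y),f_\omega^n(z))\leq e^{-n\lambda}$ for all large $n$ (call such a pair \emph{contracting}).

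Now fix $\omega\in\Omega_0$ and a regular point $x$ for $(f_\omega^n)$, with $I$ an open interval containing $x$ in its interior on which every $f_\omega^n$ is regular. By Proposition \ref{regularsegment}, for any $y\leq z$ in $I\cap K$, $f_\omega^n$ maps $[y,z]\cap K$ monotonically onto $[f_\omega^n(y),f_\omega^n(z)]\cap K$, so $\mbox{diam}(f_\omega^n([y,z]\cap K))=d(f_\omega^n(y),f_\omega^n(z))$. This identity converts diameter estimates for images of neighborhoods of $x$ in $K$ into pairwise distance estimates between images of points of $D$, which are controlled by the dichotomy on $\Omega_0$.

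The classification of $x$ then proceeds by cases. If $x$ is isolated in $K$, take $B=\{x\}$; trivially $x$ is an attractor. If $x$ accumulates in $K$ from both sides, then either there exist $y,z\in D\cap I$ with $y<x<z$ such that $(y,z)$ is contracting, in which case $B=(y,z)\cap K$ is a neighborhood of $x$ in $K$ with $\mbox{diam}(f_\omega^n(B))\leq e^{-n\lambda}$ eventually, so $x$ is an attractor; or else every such pair is separating, and for any neighborhood $B$ of $x$ in $K$, density of $D$ in $K$ furnishes $y,z\in D\cap B\cap I$ with $y<x<z$, whence $\mbox{diam}(f_\omega^n(B))\geq d(f_\omega^n(y),f_\omega^n(z))\geq\delta$ eventually, so $x$ is a repulsor. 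If instead $x$ accumulates from only one side, then $x$ is an endpoint of a gap and hence $x\in D$, and the same argument applies to pairs $(x,z)\in D\times D$ with $z\in D\cap I$ on the accumulation side.

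The main obstacle is the uncountable-to-countable passage: the dichotomy of Lemma \ref{synchro} is only stated for a fixed pair on a full-measure set depending on the pair, while we need its conclusion to hold for all regular $x$ simultaneously. Regularity of $x$ is precisely what makes this reduction possible, as monotonicity on $I$ turns a diameter bound into a pointwise distance bound that needs to be checked only for countably many pairs. The delicate point is the inclusion of the (countable) set of endpoints of gaps in $D$, which ensures that one-sided accumulation points $x$ can themselves appear as one element of a controlling pair $(x,z)\in D\times D$.
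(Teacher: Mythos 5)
Your proof follows essentially the same route as the paper's: fix $\delta,\lambda$ from Lemma \ref{synchro}, reduce the dichotomy to countably many pairs via a countable dense set $D$ enriched with the one-sided-isolated points, and use regularity of $(f_\omega^n)$ on a fixed interval $I$ around $x$ (via Proposition \ref{regularsegment}) to convert diameter estimates on neighborhoods into distance estimates between the controlling pair in $D$. One small imprecision: you take $D$ to contain gap endpoints, but $\min K$ and $\max K$ are also one-sided accumulation points without being endpoints of a bounded component of $\mathbb{R}\setminus K$; the paper's $D$ is taken to contain \emph{all} right- or left-isolated points of $K$, which covers these. Add $\min K$ and $\max K$ to $D$ (or simply phrase the requirement as ``$D$ contains every left- or right-isolated point'') and your argument is complete.
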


\begin{proof}
Fix $\lambda$ and $\delta$ given by Lemma \ref{synchro}. Let $D\subset K$ be a countable, dense subset of $K$ which contains the end points of $K$, that is the points which are either right or left isolated in $K$. For almost every $\omega$, the alternative stated in Lemma \ref{synchro} holds for every couple of points in $D$. Fix such a $\omega$, and let $x\in K$ be a point where the sequence $(f^n_\omega)_{n \geq 0}$ is regular. Let $I$ be an interval which contains $x$ in its interior such that $f_\omega^n$ is regular on $I$ for every $n$. Assume that the second case of the alternative of the statement does not hold for some neighborhood $B$ that we can assume contained in $I$. Then one can find $a\leq x$ in $B\cap D$, with strict inequality if $x$ is not isolated from the left, and in the same way one can also find $b\geq x$ in $B\cap D$, with strict inequality if $x$ is not isolated from the left. Then, since  we assumed that
	$$\liminf_{n\to +\infty} d(f_\omega^n(a),f_\omega^n(b)) <\liminf_{n\to +\infty} \mbox{diam}(f_\omega^n(B))<\delta,$$
	Lemma \ref{synchro} gives $n_0\in\N$ such that 
	$$\forall n\geq n_0, d(f_\omega^n(a),f_\omega^n(b))\leq e^{-\lambda n}.$$
	Then $B'=[a,b]\cap K$ is a neighborhood of $x$ such that, by monotonicity, 
	$$\forall n\geq n_0, \mbox{diam}(f_\omega^n(B'))\leq e^{-\lambda n}.$$
\end{proof}

\begin{lemma}\label{repellors}
There exists $m$ in $\N$, such that for almost every $\omega$, $(f_\omega^n)_{n\in\N}$ has at most $m$ repellors.
\end{lemma}
\begin{proof}
Assume that $x_1,\ldots, x_r$ are pairwise distinct regular points which are repellors of $(f_\omega^n)_{n\in\N}$. Let $I_1,..,I_r$ be pairwise disjoint interval containing $x_1,\ldots, x_r$ in $K$, where the sequence $(f_\omega^n)_n$ is regular. Then for each $k=1,\ldots,r$ there exist $a_k,b_k$ in $I_k$ such that for $n$ large, $d(f_\omega^n(a_k),f_\omega^n(b_k))\geq \delta$. Then for $n$ large,  $[f_\omega^n(a_1),f_\omega^n(b_1)],\ldots,[f_\omega^n(a_r),f_\omega^n(b_r)]$ are pairwise disjoint intervals of $K$, all of them with length larger than $\delta$. It implies that $r\delta\leq \mbox{diam}(K)$ and $r \leq \frac{\mbox{diam}(K)}{\delta}$.
\end{proof}

\subsection{Conclusion}

As a direct consequence of Lemmas \ref{regular} and \ref{repellors}, we have the following result.

\begin{proposition}\label{semilocalcontr}
	Suppose the action of $G$ on $K$ has no invariant probability measure. There exist $\lambda>0$ and $m\in \N$ such that for almost every $\omega$, for all $x$ in $K$ except at most $m$ exceptional points, there exists a neighborhood $B$ of $x$ such that for any large enough $n\geq 0$, $f_\omega^n(B)$ has diameter less than $e^{-n\lambda}$.
\end{proposition}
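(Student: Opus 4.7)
The plan is to combine the three preceding results: the attractor/repulsor dichotomy (the unnamed lemma before Lemma \ref{repulsors}), Lemma \ref{regular}, and Lemma \ref{repulsors}. Since the excerpt explicitly advertises the proposition as a \emph{direct consequence} of the last two, the proof is essentially a bookkeeping of exceptional points, with $\lambda$ inherited from the dichotomy lemma.

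First I would take $\lambda>0$ from the attractor/repulsor lemma (the same $\lambda$ that already appears in the definition of an attractor). Then I would apply Lemma \ref{regular} to obtain an integer $m_1$ and, for $\mathbb P$-almost every $\omega$, a finite set $F_1(\omega)$ of cardinality at most $m_1$ consisting of all points where the sequence $(f_\omega^n)_{n\geq 0}$ fails to be regular. Independently, Lemma \ref{repulsors} yields an integer $m_2$ and, for $\mathbb P$-almost every $\omega$, a finite set $F_2(\omega)$ of cardinality at most $m_2$ consisting of all repulsors of $(f_\omega^n)_{n\geq 0}$. I would then set $m=m_1+m_2$ and $F(\omega)=F_1(\omega)\cup F_2(\omega)$, which almost surely has cardinality at most $m$.

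For $x\in K\setminus F(\omega)$, the point $x$ is regular for $(f_\omega^n)_{n\geq 0}$, so the attractor/repulsor dichotomy applies and yields one of the two alternatives. Since $x$ is not in $F_2(\omega)$, it is not a repulsor, so the first alternative holds: there is a neighborhood $B$ of $x$ and $n_0\in\N$ such that $\mbox{diam}(f_\omega^n(B))\leq e^{-\lambda n}$ for all $n\geq n_0$. This is precisely the desired contraction statement. Taking the intersection of the three almost sure events (one for each lemma) again has full probability, so the conclusion holds for almost every $\omega$.

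There is no real obstacle here: the dichotomy lemma has already isolated $\lambda$ and $\delta$ uniformly in $\omega$ and $x$, and the cardinality bounds $m_1,m_2$ are independent of $\omega$ as well, so the constants $\lambda$ and $m$ in the statement can be chosen uniformly. The only minor subtlety is that the attractor/repulsor lemma was stated for \emph{every} regular point of a full-measure $\omega$, so one must verify that the almost sure sets in Lemmas \ref{regular} and \ref{repulsors} can be intersected with the almost sure set of the dichotomy lemma; but this is immediate since a countable intersection of full-measure sets has full measure.
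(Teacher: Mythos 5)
Your proof is correct and follows the same route as the paper: decompose the exceptional set into non-regular points (bounded by Lemma~\ref{regular}) and repulsors (bounded by Lemma~\ref{repulsors}), then invoke the attractor/repulsor dichotomy at every remaining point, with $\lambda$ inherited from that dichotomy. The bookkeeping of the three almost-sure events and the cardinality bound $m=m_1+m_2$ matches the paper's argument exactly.
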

\begin{proof}	
	For $\omega\in \Omega$, we set $A=A_1\cup A_2$ where $A_1$ is the set of points of $K$ where $(f_\omega^n)$ is not regular in the sense of Lemma \ref{regular} and $A_2$ is the set of points where $(f_\omega^n)$ is not contracting in the sense of Lemma \ref{repellors}. As a consequence of these two lemmas, there exists an integer $p$ such that, almost surely, $A$ is finite of cardinal $p$. Every point outside $A$ is an attractor in the sense of Lemma \ref{repellors}, so it satisfies the conclusion of the statement by definition.
\end{proof}

\begin{remark}
	The statement of Proposition \ref{semilocalcontr} is very close to that of Proposition \ref{localcontr}, but somewhat more precise. If we denote $$\mbox{Exc}(\omega)=\{x\in K|\not\exists B \mbox{ neighborhood of } K| \exists n_0 \geq 0,\forall n\geq n_0, \mbox{diam}(f_\omega^n(B)) \leq e^{-\lambda n}\},$$ 
	then we obtain from Proposition \ref{localcontr} and Fubini Theorem that if $m$ is any measure on $K$, for almost every $\omega$,   $\mbox{Exc}(\omega)$ has null $m$-measure. Proposition \ref{semilocalcontr} states that it is finite. And though both statements seem to describe a local behavior, the statement of Proposition \ref{semilocalcontr} one has actually more global consequences. For instance, this statement (combined with one more use of Lemma \ref{synchro}) will allow us to prove Proposition \ref{globalcontr}.
\end{remark}

Let us prove Proposition \ref{globalcontr}.

\begin{proof}[Proof of Proposition \ref{globalcontr}]
	Let $\lambda$ be the number given by Proposition \ref{semilocalcontr}. Let us fix a set $D$ countable and dense in $K$. Let $\omega$ be an event such that the conclusion of Proposition \ref{semilocalcontr} holds and so that the alternative of Lemma \ref{synchro} holds for every couple of points in $D$. Let $F$ be the finite set given by Proposition \ref{semilocalcontr} for this $\omega$. Let $A$ be an open neighborhood of $F$, and $L=K\backslash A$.
	
	By the conclusion of Proposition \ref{semilocalcontr} and the compactness of $L$, we can cover $L$ by a finite number of balls $B_1,\ldots,B_r$ such that for any sufficiently large $n$, all the sets $f_\omega^n(B_i)$ have diameter less than $e^{-n\lambda}$. Each ball $B_i$ contains an element $x_i$ of $D$. So, there exists $n_0\in\N$ such that for any $i,j$ in $\{1,\ldots,r\}$, one (and only one) of the following assertions holds:
	\begin{enumerate}
		\item $\forall n\geq n_0, d(f_\omega^n(x_i),f_\omega^n(x_j))\leq e^{-n\lambda}$,
		\item $\forall n\geq n_0, d(f_\omega^n(x_i),f_\omega^n(x_j))\geq \delta$.
	\end{enumerate}
	Let $y_i=f_\omega^{n_0}(x_i)$. By compactness of $K$, we can divide the set of indexes $\{1,\ldots,r\}$ in $m$ subsets $J_1,\ldots,J_m$ such that each set $\{y_i,i\in J_k\}$ has diameter less than $\delta$, where $m$ is a constant depending only on $K$ and $\delta$. Then for $i,j\in J_k$ we have $d(f_\omega^n(x_i),f_\omega^n(x_j))\leq e^{-n\lambda}$ for $n\geq n_0$. Setting $L_k=\bigcup_{i\in J_k} B_i$, we have that for $n$ large enough, $f_\omega^n(L_k)$ has diameter less than $re^{-n\lambda}$. We deduce that for $n$ large enough, every set $f_\omega^n(L_k)$ is included in a ball of radius $e^{-n\lambda/2}$, and  $f_\omega^n(L)$ is contained in a union of $m$ balls of radius $e^{-n\lambda/2}$.
	
\end{proof}

\section{Complements}

In the case where we consider group of diffeomorphisms of a compact subset of $\R$, some results of the article can be made more precise. Namely, the contraction phenomenon can be stated in a smooth sense, and the contracting elements given by Proposition \ref{globalcontrcorollary} are actually Morse-Smale diffeomorphisms. In this section, we state precisely and prove such results. Throughout the whole section, $K$ denotes a compact subset of $\R$ and $G$ denotes a subgroup of $\mbox{Diff}^1(K)$.

\subsection{Smooth contractions}

\begin{proposition}\label{globalcontrdiffeo}
	Suppose that $G$ is finitely generated has no invariant probability measure. Defining a random walk on $G$ as in Section \ref{randomwalk}, there exist positive numbers $\lambda$ and $C$ and an integer $p$ such that the following statement holds. For $\mathbb{P}$-almost every $\omega$ in $\Omega$, there exists a finite set $F$ of cardinal less than $p$ such that, for any neighborhood $V$ of $F$, there exists a rank $n_0$ such that 
	$$\forall n\geq n_0, \sup_{x\in K\backslash V} |(f_\omega^n)'(x)|\leq e^{-n\lambda}.$$
\end{proposition}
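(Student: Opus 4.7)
The plan is to upgrade the diameter contraction of Proposition \ref{globalcontr} to a derivative contraction, using the $C^1$ structure through the mean value theorem combined with a Cesàro average of moduli of continuity. I first apply Proposition \ref{globalcontr} to obtain, for almost every $\omega$, the finite set $F$ of cardinal at most $p$ and the corresponding diameter contraction. Combined with the break-point analysis of Lemma \ref{regular} (as used in the proof of Proposition \ref{semilocalcontr}), every $x\in K\setminus F$ lies in an interval $J_x = [a_x, b_x] \subset \R$ with $a_x, b_x\in K$ on which $f_\omega^n$ is regular for every $n$, and moreover $|f_\omega^n(b_x) - f_\omega^n(a_x)| \leq e^{-n\lambda}$ for $n \geq n_0(x)$. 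Fixing a neighborhood $V$ of $F$, compactness of $K\setminus V$ yields a finite subcover $J_{x_1},\ldots,J_{x_N}$, uniformizing $\rho := \min_i |b_{x_i}-a_{x_i}| > 0$ and $n_0 := \max_i n_0(x_i)$.

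Fix an interval $J = J_{x_i}$. Since $f_\omega^n$ has no break pair in $J$ for any $n$, the local $C^1$ extensions of the generators compose coherently along the $n$-step trajectory to give a monotonic $C^1$ extension $\tilde F_n : J \to \R$ that agrees with $f_\omega^n$ on $J\cap K$. The mean value theorem yields $c_n \in J$ with
$$|\tilde F_n'(c_n)| \;=\; \frac{|f_\omega^n(b_{x_i}) - f_\omega^n(a_{x_i})|}{|b_{x_i} - a_{x_i}|} \;\leq\; \frac{e^{-n\lambda}}{\rho}\qquad (n\geq n_0).$$
To extend this bound to an arbitrary $y \in J\cap K$, I use the chain rule. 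Writing $s_k := f_{\omega_k}$ and letting $\eta$ be a common modulus of continuity for the finite family $\{\log|s'|: s\in S\}$,
$$\bigl|\log|(f_\omega^n)'(y)| - \log|\tilde F_n'(c_n)|\bigr| \;\leq\; \sum_{k=0}^{n-1} \eta\bigl(|\tilde F_k(y) - \tilde F_k(c_n)|\bigr) \;\leq\; \sum_{k=0}^{n-1} \eta\bigl(\mathrm{diam}(\tilde F_k(J))\bigr).$$
Since $\mathrm{diam}(\tilde F_k(J)) \leq e^{-k\lambda}$ for $k\geq n_0$ and $\eta(e^{-k\lambda})\to 0$, Cesàro averaging gives $\tfrac{1}{n}\sum_{k=0}^{n-1} \eta(\mathrm{diam}(\tilde F_k(J))) \to 0$, uniformly in $i$. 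Combined with the MVT bound, $\limsup_n \tfrac{1}{n}\log|(f_\omega^n)'(y)| \leq -\lambda$ uniformly on $K\setminus V$, and fixing any $\lambda' \in (0,\lambda)$ yields $\sup_{y \in K\setminus V} |(f_\omega^n)'(y)| \leq e^{-n\lambda'}$ for all $n$ large enough.

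The main obstacle is the last step under the mere $C^1$ hypothesis: the classical summable-distortion trick (which typically requires $C^{1+\alpha}$) fails because $\sum_k \eta(e^{-k\lambda})$ need not converge for a general continuous modulus $\eta$ vanishing at $0$. The remedy is to work on the logarithmic and Cesàro scale, for which only $\eta(e^{-k\lambda}) \to 0$ is needed. A secondary technical point is the coherent construction of the $C^1$ extension $\tilde F_n$; the absence of break pairs in $J$ at every time step is exactly what makes the composition of local $C^1$ extensions well defined along the orbit of $c_n$ and guarantees the chain-rule identity.
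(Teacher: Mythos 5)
Your proof is essentially the paper's argument in a different notation: both upgrade the diameter contraction of Proposition \ref{semilocalcontr} to a derivative estimate by controlling the oscillation of $\log$-derivatives along the forward orbit of a small interval, using that the per-step oscillation is small once the image is small, so the cumulative oscillation grows strictly slower than $n\lambda$. The paper phrases this via the multiplicative distortion $D(g,B)$ defined purely through difference quotients of $g$ on $K$, fixes $\delta$ so that $D(s,B)\leq e^{\lambda/2}$ whenever $\mathrm{diam}(B)\leq\delta$, shrinks $B$ so $\mathrm{diam}(f_\omega^n(B))\leq\delta$ for all $n$, and gets $|(f_\omega^n)'|\leq D(f_\omega^n,B)\cdot\mathrm{diam}(f_\omega^n(B))/\mathrm{diam}(B)\leq\mathrm{diam}(B)^{-1}e^{-n\lambda/2}$ directly. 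Your Ces\`aro observation (that $\eta(e^{-k\lambda})\to 0$ suffices, no summability needed) is a clean reformulation of that same per-step bound and gives the marginally sharper exponent $\lambda'<\lambda$. Two points to tighten: (i) you invoke Proposition \ref{globalcontr}, but the neighborhood-by-neighborhood contraction you actually use is Proposition \ref{semilocalcontr}, on which \ref{globalcontr} itself rests; (ii) your mean-value-theorem step requires building a global monotone $C^1$ extension $\tilde F_n$ on $J$ and a uniform modulus of continuity for the extended generators' derivatives. Since elements of $\mathrm{Diff}^1(K)$ are only \emph{locally} restrictions of diffeomorphisms, local extensions need not agree on gaps of $K$, so $\tilde F_n$ must be patched together and its derivative's modulus must be controlled across all gaps; this is doable but delicate, and the paper's choice to work with difference quotients on $K$ (no extensions, no MVT) avoids the issue entirely. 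If you want to keep your presentation, the simplest fix is to replace the MVT point $c_n$ by the endpoint pair $(a,b)$ and the quantity $|\tilde F_n'(c_n)|$ by the difference quotient $\frac{d(f_\omega^n(a),f_\omega^n(b))}{d(a,b)}$, and to measure oscillation by $\log D(s,\cdot)$ rather than the modulus of continuity of an extension's derivative.
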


\begin{proof}
	We use the contraction phenomenon given by Proposition \ref{semilocalcontr} combined with distortion arguments. For $g\in G$ and $B\subset K$ we define the (exponential) distortion
	$$D(g,B)=\frac{\sup_{x,y \in B} \frac{d(g(x),g(y))}{d(x,y)}}{\inf_{x,y \in B} \frac{d(g(x),g(y))}{d(x,y)}}.$$ 
Observe that, for $g,h \in G$ and for $B \subset K$, $D(g\circ h,B) \leq D(g,h(B))D(h,B)$.

	Let $\lambda$ be given by Proposition \ref{semilocalcontr}. We fix $\delta>0$ such that any generator of $G$ has on any ball of $K$ of diameter $\leq \delta$ a distortion less than $e^{\frac{\lambda}{2}}$. we fix also $\omega$ such that the conclusion of Proposition \ref{semilocalcontr} holds, and let $F$ be the correponding finite set of exceptional points. 
	 	At any point of $K\backslash F$, we can find a ball $B$ containing this point so that, for $n$ large enough,
	 	$$ \mbox{diam}(f_\omega^n(B))\leq e^{-n\lambda}.$$
	 	Shrinking $B$ if necessary, we can assume that the above inequality holds for any $n \geq 0$ and that
	 	$$\forall n\geq 0, \mbox{diam}(f_\omega^n(B))\leq \delta.$$
	 	From this last point and our choice of $\delta$ we can deduce that for any $n\geq 0$, $D(f_\omega^n,B)\leq e^{n\frac{\lambda}{2}}$. We deduce that, for any $n\geq 0$,
$$\sup_{x\in B} |(f_\omega^n)'(x)|\leq D(f_\omega^n,B) \frac{\mbox{diam}(f_\omega^n(B))}{\mbox{diam}(B)}\leq \frac{1}{\mbox{diam}(B)} e^{-n\frac{\lambda}{2}}.$$	 
If $V$ is a neighborhood of $A$ we deduce by compactness that for any large enough $n$,
	 	$$\sup_{x\in K\backslash V} |(f_\omega^n)'(x)|\leq e^{-n\frac{\lambda}{4}}.$$
\end{proof}
\begin{remark}\label{diffremark}~\\
\begin{enumerate}
\item In fact we the proof gives for $n$ large the estimate $\sup_{x,y\in K\backslash V} \frac{d(f_\omega^n(x),f_\omega^n(y))}{d(x,y)}\leq e^{-n\lambda}$, which is apriori more precise since the mean value theorem does not hold in $K$.
\item We could as well state a smooth version of the local version Proposition \ref{localcontr} by using the same argument. In its simplest form, it states that there exists $\lambda<0$ for any $x$ in $K$, for $\mathbb{P}$ almost every $\omega$, $\limsup_{n\to +\infty} \frac{\log |(f_\omega^n)'(x)|}{n}\leq \lambda$.
\end{enumerate}
\end{remark}
\begin{corollary}\label{smoothcontract}
	Suppose that $G$ has no invariant measure. Then there exists an integer $p$ such that for any $\ep>0$, there exist subsets $A$ and $B$ of $K$ of cardinal $p$ and a diffeomorphism $g$ in $G$ such that $|g'|\leq \ep$ on $K\backslash A^\ep$ and $|(g^{-1})'|\leq \ep$ on $K\backslash B^\ep$.
\end{corollary}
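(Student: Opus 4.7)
The plan is to apply Proposition~\ref{globalcontrdiffeo} twice --- once to the original random walk driven by $P$ on $S$, and once to the random walk driven by the pushforward measure $P^{\ast} := \mathrm{inv}_\ast P$ supported on $S^{-1}$ --- and then to intersect the two almost-sure contraction events in order to produce a single element $g\in G$ for which both $g$ and $g^{-1}$ have small derivative outside a finite set. The hypotheses of Proposition~\ref{globalcontrdiffeo} apply to the $P^{\ast}$-walk because $S^{-1}$ still generates $G$, $P^{\ast}$ has full support on $S^{-1}$, and the action of $G$ on $K$ still admits no invariant probability measure. After shrinking $\lambda$, one may use a common integer $p$ and exponent $\lambda>0$ for both walks.

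The key probabilistic ingredient is the distributional identity coming from
$$
(f_\omega^n)^{-1} \;=\; f_{\omega_0}^{-1}\circ f_{\omega_1}^{-1}\circ\cdots\circ f_{\omega_{n-1}}^{-1},
$$
which realises $(f_\omega^n)^{-1}$ as a composition of $n$ i.i.d.\ $P^{\ast}$-distributed factors. Since the joint law of an i.i.d.\ family is invariant under reordering, reversing the order of composition does not change the induced distribution on $G$; hence the law of $(f_\omega^n)^{-1}$ under $\mathbb{P}$ coincides with the law of $f_{\omega^{\ast}}^n$ under $\mathbb{P}^{\ast}$. Any event on $G$ having $\mathbb{P}^{\ast}$-probability at least $1-\eta$ for $f_{\omega^{\ast}}^n$ therefore has $\mathbb{P}$-probability at least $1-\eta$ when tested on $(f_\omega^n)^{-1}$.

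Then, starting from Proposition~\ref{globalcontrdiffeo} and using a standard Egoroff-type argument on the $\mathbb{P}$-a.s.\ finite random rank $n_0(\omega,\ep)$ appearing in its conclusion, for any fixed $\ep>0$ and $\eta\in(0,\tfrac12)$ there is a threshold $N$ such that for every $n\geq N$ the event
$$
\bigl\{\omega : \exists\, A\subset K,\ |A|\leq p,\ \sup\nolimits_{x\in K\setminus A^\ep}|(f_\omega^n)'(x)|\leq \ep\bigr\}
$$
has $\mathbb{P}$-probability at least $1-\eta$; and the analogous event for $(f_\omega^n)^{-1}$, with a corresponding set $B$, has $\mathbb{P}$-probability at least $1-\eta$ by the distributional identity above. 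Their intersection is then non-empty, and choosing $\omega$ in it and setting $g:=f_\omega^n$ yields the desired pair; the sets $A$ and $B$ can be padded with arbitrary points of $K$ to have cardinality exactly $p$.

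The step I expect to be the main obstacle is the Egoroff-type extraction of a uniform-in-$\omega$ threshold $N$ from the ``eventually in $n$'' formulation of Proposition~\ref{globalcontrdiffeo}: one needs to check that the random rank $n_0(\omega,\ep)$ from which the derivative bound $e^{-n\lambda}$ at scale $V=F(\omega)^{\ep}$ becomes valid is a $\mathbb{P}$-a.s.\ finite measurable function of $\omega$ for every fixed $\ep$, so that its superlevel sets exhaust $\Omega$ up to arbitrarily small probability. Once that measurability and finiteness are in place, the symmetry argument via $P^{\ast}$ and the intersection-of-events trick are essentially formal.
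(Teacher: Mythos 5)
Your proof is correct and mirrors the paper's: use Proposition~\ref{globalcontrdiffeo} together with an Egoroff-type argument to show the contraction event for $f_\omega^n$ has probability tending to $1$, do the same for $(f_\omega^n)^{-1}$, and take $n$ large enough that the two events intersect, setting $g=f_\omega^n$. The one step you handle differently is the inverse: the paper simply says ``since the random walk is symmetric'' --- an assumption that is never actually imposed on $P$ in the preliminaries, although one could fix a symmetric $S$ and symmetric $P$ from the start --- whereas you apply the proposition to the pushforward law $P^{\ast}=\mathrm{inv}_{\ast}P$ on $S^{-1}$ and transfer via the i.i.d.\ reordering identity $(f_\omega^n)^{-1}=f_{\omega_0}^{-1}\circ\cdots\circ f_{\omega_{n-1}}^{-1}\stackrel{d}{=}f_{\omega^{\ast}}^n$. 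This needs no symmetry hypothesis and is cleaner; the paper's version is the special case $P=P^{\ast}$. Your concern about measurability of the random rank $n_0(\omega,\ep)$ is reasonable but benign: for fixed $n$ the event $\{\exists A,\ |A|\leq p:\ \sup_{K\setminus A^{\ep}}|(f_\omega^n)'|\leq\ep\}$ depends only on $\omega_0,\dots,\omega_{n-1}$ and is Borel by compactness of the space of $\leq p$-point subsets of $K$, and Proposition~\ref{globalcontrdiffeo} forces its probability to $1$.
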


\begin{proof}
        We can assume that $G$ is finitely generated thanks to Lemma \ref{reduction}. We define a random walk on $G$ as in Section \ref{randomwalk} generated by a symmetric probability measure.  By proposition \ref{globalcontrdiffeo}, the probability that there exists a set $A$ of cardinal $p$ such that $|(f_\omega^n)'|\leq e^{-n\lambda}$ on $K\backslash A^\ep$ tends to $1$ as $n$ tends to $+\infty$.\\
	Since the random walk is symmetric, the probability that there exists a set $B$ of cardinal $p$ such that $|((f_\omega^n)^{-1})'|\leq e^{-n\lambda}$ on $K\backslash B^\ep$ tends also to $1$ as $n$ tends to $+\infty$.\\
	Take $n$ large enough such that the two sets of events above intersect each other and the conclusion follows.	
\end{proof}

\subsection{Existence of a Morse-Smale dffeomorphism}
We prove here that a group of diffeomorphisms of a compact subset of $\R$ without invariant probability measure always contains a Morse-Smale element.

Let us define first the notion of Morse-Smale diffeomorphism in this context, which is a very natural generalization of the standard definition.

\begin{definition}
Let $K\subset \R$ be a compact subset and $f\in \mbox{Diff}^1(K)$.	
	\begin{itemize}
		\item We say that a periodic point $x$ of $f$ of order $n$ is hyperbolic if $|(f^n)'(x)|\not=1$. It is attracting if $|(f^n)'(x)|<1$ and repelling if $|(f^n)'(x)|>1$.
		\item We say that $f$ is a Morse-Smale diffeomorphism if all its periodic points are hyperbolic (it implies that $f$ has a finite number of periodic points), and every forward orbit or backward orbit of $f$ tends to a periodic orbit.		
	\end{itemize}
\end{definition}

We use the following criterion to prove that a diffeomorphism is Morse-Smale.


\begin{proposition}\label{MorseSmaleCriterium}
	
	Let $g\in \mbox{Diff}^1(K)$. If there exist two disjoint open subsets $A$ and $B$ of $K$ such that $|g'|<1$ on $K\setminus A$ and $|(g^{-1})'|<1$ on $K\setminus B$, then $g$ is a Morse-Smale diffeomorphism.
\end{proposition}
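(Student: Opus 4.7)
The plan is to turn the two derivative conditions into a dynamical decomposition of $K$ in which $A$ acts as a repelling region and $B$ as an attracting one; uniform contraction of $g$ on $\overline B$ and of $g^{-1}$ on $\overline A$ will then force every periodic orbit to be hyperbolic and every orbit to behave in a gradient-like way. First, by continuity of $g'$ and compactness, the strict inequalities upgrade to uniform bounds $|g'|\leq c<1$ on $K\setminus A$ and $|(g^{-1})'|\leq c'<1$ on $K\setminus B$; since $A$ and $B$ are disjoint open sets, $\overline B\subset K\setminus A$ and $\overline A\subset K\setminus B$, so these bounds pass to the closures. Since no point can simultaneously satisfy $|g'|<1$ and $|g'|>1$, and since $|g'|>1$ on $K\setminus g^{-1}(B)$ (from the $g^{-1}$-condition), the sets $K\setminus A$ and $K\setminus g^{-1}(B)$ are disjoint, so $K=A\cup g^{-1}(B)$, i.e.\ $g(K\setminus A)\subset B$. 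Combined with $B\subset K\setminus A$, this yields $g(B)\subset B$, and symmetrically $g^{-1}(A)\subset A$.

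Next I would classify periodic orbits. Any $g$-periodic orbit $O$ must lie in $A\cup B$: a point $p\in O\cap(K\setminus(A\cup B))$ would satisfy $g(p)\in g(K\setminus A)\subset B$, and forward invariance of $B$ would force the entire periodic orbit (hence $p$ itself) into $B$, a contradiction. Disjointness of $A,B$ together with the $g^{-1}$-invariance of $A$ (resp.\ the $g$-invariance of $B$) then forces $O\subset A$ or $O\subset B$. Hyperbolicity is immediate: for $O\subset B$, $|(g^n)'(p)|\leq c^n<1$ (attractor), while for $O\subset A$, $|(g^{-1})'|\leq c'$ along $O$ gives $|(g^n)'(p)|\geq (c')^{-n}>1$ (repeller).

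For orbit convergence, any forward orbit of $x\in K$ that meets $K\setminus B$ either passes through $C:=K\setminus(A\cup B)$ and is immediately trapped in $B$, or remains in $A$ forever. In the first case, $\omega(x)\subset\overline B$ is compact and $g$-invariant, and the derivative bound together with compactness makes $g$ uniformly locally $(c+\ep)$-Lipschitz on $\overline B$ for some $c+\ep<1$; a standard contraction argument then implies that the non-wandering set of $g|_{\overline B}$ consists only of attractive periodic orbits, and since the basin of any such orbit is open and forward-invariant, $\omega(x)$ collapses to a single attractive periodic orbit. In the second case, the full orbit of $x$ lies in $A$; running the same argument for $g^{-1}$ on $\overline A$ shows that $\omega(x)$ is a single repulsive periodic orbit $O'$, and hyperbolicity of $O'$—which makes $d(g^n(x),O')$ grow exponentially under iteration whenever it is small—forces $d(x,O')=0$, hence $x\in O'$ is periodic. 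Finiteness of the periodic orbits follows from a separation estimate: since $g^{-1}$ is uniformly locally $(c'+\ep)$-Lipschitz on $\overline A$, any two distinct repulsive periodic orbits at mutual distance less than some uniform $\delta_0$ would, upon iteration of $g^{-1}$, have that distance shrink to zero and hence intersect; the dual estimate on $\overline B$ handles attractive orbits. The main obstacle in this strategy is the second convergence case, where the hypothesis provides no direct contraction of $g$ inside $A$; it is resolved by transplanting the contraction argument to $g^{-1}$ on $\overline A$ and combining it with local hyperbolic expansion near the repulsive $\omega$-limit.
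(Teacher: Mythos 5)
Your structural setup mirrors the paper's exactly: both proofs extract $g(K\setminus A)\subset B$, hence $g(B)\subset B$ and $g^{-1}(A)\subset A$, from the two derivative conditions, and both then run a contraction argument for $g$ on the $g$-invariant set $K\setminus A$ and for $g^{-1}$ on the $g^{-1}$-invariant set $K\setminus B$. The paper packages the contraction step as a clean stand-alone lemma (a locally contracting map of a compact metric space has finitely many periodic orbits and every forward orbit converges to one), while you argue more by hand via non-wandering sets, basins and a separation estimate; the content is the same.

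There is, however, one step that as written does not go through: in your second convergence case (forward orbit of $x$ contained in $A$ forever) you assert that ``running the same argument for $g^{-1}$ on $\overline A$ shows that $\omega(x)$ is a single repulsive periodic orbit $O'$.'' The contraction argument applied to $g^{-1}$ on $\overline A$ controls the forward dynamics of $g^{-1}$, i.e.\ it shows that $\alpha(x)$, not $\omega(x)$, collapses to a single periodic orbit; your basin-of-attraction mechanism has no forward-time analogue for a repelling orbit. To repair this you need a separate observation about $\omega(x)$ itself: since $\omega(x)$ is a compact, fully invariant subset of $K\setminus B$ on which $g^{-1}$ is a uniformly local contraction \emph{and} $g^{-1}|_{\omega(x)}$ is a bijection, the contraction argument forces $\omega(x)$ to be finite (a nondegenerate contraction-equivalence class cannot be mapped bijectively onto itself), so $\omega(x)$ is a finite union of repulsive periodic orbits. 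Then the expansion estimate you already wrote down --- $d(g^n(x),\omega(x))\to 0$ while $|g'|\geq q>1$ on a neighborhood of $\omega(x)$ in $K\setminus B$, so $d(g^{n+1}(x),\omega(x))\geq q\,d(g^n(x),\omega(x))$ for $n$ large --- gives $d(x,\omega(x))=0$ directly, hence $x$ is periodic, without ever needing $\omega(x)$ to be a single orbit. With this substitution the proof is complete and follows the paper's line closely.
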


To prove this proposition, we need the following general lemma.
\begin{lemma}
	Let $(X,d)$ be a metric compact space and $f:X\to X$ be a locally contracting map, in the sense that there exists $\delta>0$ such that 
	$$\forall x,y\in K, d(x,y)\leq \delta \Rightarrow d(f(x),f(y))<d(x,y).$$
	Then $f$ has a finite number of periodic orbits and every forward orbit of $f$ tends to one of them.
\end{lemma}
\begin{proof}
 First we notice that for $x,y$ in $X$, if $d(x,y)\leq \delta$ then $d(f^n(x),f^n(y))$ decreases to $0$: it is obviously decreasing and if $(x',y')$ is a cluster value of $(f^n(x),f^n(y))$ then $d(x',y')=d(f(x'),f(y'))$ so $x'=y'$.
 Say that $x\sim y$ if $d(f^n(x),f^n(y))\to 0$. There is a finite number of equivalence class $X_1,\ldots,X_m$, which are open and closed in $X$. Observe that $x\sim y$ if and only if $f(x)\sim f(y)$ so that $f$ induces a permutation of the sets $X_1,\ldots,X_m$. Hence there exists an integer $N$ such that $f^N$ leaves each of these sets invariant. Then $f^N$ is strictly contracting on each compact set $X_i$, so every forward orbit of $f^N$ converges to a unique fix point on each $X_i$. The result follows. 
\end{proof}

\begin{proof}[Proof of Proposition  \ref{MorseSmaleCriterium}]
The assumptions imply that $|(g^{-1})'|>1$ on $g(K\backslash A)$ and $|(g^{-1})'|<1$ on $K\backslash B$, so $g(K\backslash A)\subset B$. Since $A\cap B=\emptyset$, we deduce that $F=K\backslash A$ is a closed set which is invariant under $g$, and there exists $q<1$ such that $|g'|<q$ on $F$. In particular $g$ is a local contraction on $F$. Hence the lemma applies: $g$ has a finite number of periodic points on $K\backslash A$, and the forward orbit of $g$ of any point of $K\backslash A$ tends to a periodic orbit. Moreover, all these periodic points are hyperbolic since $|g'|<q$. In the same way, by applying the lemma to $g^{-1}$ on $K\backslash B$, we obtain that $g$ has a finite number of periodic points on $K\backslash B$, all of them hyperbolic, and the backward orbit of $g$ of any point of $K\backslash B$ tends to a periodic orbit.
	
	So $g$ has a finite number of periodic points, all of them hyperbolic. Moreover, every non empty totally invariant closed set meets either $K\backslash A$ or $K\backslash B$ and so contains a periodic point. We deduce that every forward orbit or backward orbit of $f$ tends to a periodic orbit and $f$ is Morse-Smale.		
\end{proof}

\begin{proposition}\label{MorseSmaleExistence}
	Let $G$ be a subgroup of $\mbox{Diff}^1(K)$ without invariant probablity measure. Then there exists a Morse-Smale diffeomorphism in $G$.
\end{proposition}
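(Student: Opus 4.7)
The plan is to combine Corollary \ref{smoothcontract} with Proposition \ref{displacement} to manufacture a group element that fits the criterion of Proposition \ref{MorseSmaleCriterium}. Corollary \ref{smoothcontract} already produces $g\in G$ contracting outside a small neighborhood of a finite set $A$ and whose inverse contracts outside a neighborhood of another finite set $B$, but nothing forces $A$ and $B$ to be disjoint; post-composing $g$ with a carefully chosen $h\in G$ will displace the $B$-side away from the $A$-side without spoiling the derivative control.

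First I would apply Corollary \ref{smoothcontract} to a sequence $\ep_n\to 0$, getting finite sets $A_n,B_n\subset K$ of cardinal $p$ and elements $g_n\in G$ satisfying $|g_n'|\le\ep_n$ on $K\setminus A_n^{\ep_n}$ and $|(g_n^{-1})'|\le\ep_n$ on $K\setminus B_n^{\ep_n}$. Because $p$ is fixed and $K$ is compact, after passing to a subsequence I may assume $A_n\to A$ and $B_n\to B$ in Hausdorff distance, with $A$, $B$ finite. Since $G$ has no invariant probability measure, it also has no finite orbit (any finite orbit $F$ would yield the invariant measure $\frac{1}{|F|}\sum_{x\in F}\delta_x$), so Proposition \ref{displacement} provides $h\in G$ with $h(B)\cap A=\emptyset$. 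Set $d:=\mathrm{dist}(A,h(B))>0$.

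Next, let $M:=\max(\|h'\|_\infty,\|(h^{-1})'\|_\infty)$, both finite since $h\in\mathrm{Diff}^1(K)$ and $K$ is compact. For $n$ large enough, the Hausdorff convergence together with uniform continuity of $h$ give $A_n\subset A^{d/8}$ and $h(B_n)\subset h(B)^{d/8}$. Choose $\eta_n\in(0,d/8)$ so small that uniform continuity of $h^{-1}$ ensures $d(y,h(B_n))\ge\eta_n\Rightarrow d(h^{-1}(y),B_n)\ge\ep_n$, and take $n$ large enough so that additionally $\ep_n<d/8$ and $M\ep_n<1$. Set $g:=h\circ g_n$. The chain rule yields, for $x\notin A_n^{\ep_n}$,
\[ |g'(x)|=|h'(g_n(x))|\cdot|g_n'(x)|\le M\ep_n<1, \]
and for $y\notin h(B_n)^{\eta_n}$,
\[ |(g^{-1})'(y)|=|(g_n^{-1})'(h^{-1}(y))|\cdot|(h^{-1})'(y)|\le M\ep_n<1. \]
The open sets $\tilde A:=A_n^{\ep_n}$ and $\tilde B:=h(B_n)^{\eta_n}$ lie inside $A^{d/4}$ and $h(B)^{d/4}$ respectively, hence are disjoint since $\mathrm{dist}(A,h(B))=d$. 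Proposition \ref{MorseSmaleCriterium} then applies and $g$ is Morse--Smale.

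The main obstacle is the coupled bookkeeping in the third step: the element $h$ has to be fixed before the index $n$, and then $n$ must be chosen large enough to simultaneously absorb the derivative bound $M\ep_n<1$ and the Hausdorff approximation of $(A_n,h(B_n))$ by $(A,h(B))$, while the scale $\eta_n$ is dictated by the modulus of uniform continuity of $h^{-1}$ so that the bound on $(g_n^{-1})'$ transfers to $(g^{-1})'$. This is precisely where it matters that the cardinal $p$ in Corollary \ref{smoothcontract} is uniform in $\ep$: it is what both enables the Hausdorff-compactness argument and guarantees the finiteness of the limits $A$ and $B$ to which Proposition \ref{displacement} may be applied.
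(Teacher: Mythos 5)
Your proof is correct, and it follows essentially the same route as the paper: extract Hausdorff limits $A,B$ from the sequence $(A_n,B_n)$, use Proposition \ref{displacement} to separate the two finite sets, post-compose a sufficiently contracting $g_n$ by the displacing element $h$, and invoke Proposition \ref{MorseSmaleCriterium}. The only cosmetic difference is organizational: the paper first upgrades, by compactness, to a fixed pair $(A,B)$ satisfying the conclusion of Corollary \ref{smoothcontract} for \emph{every} $\ep$, then records the abstract fact that $(u(A),v(B))$ inherits this property, and only then applies displacement; you instead keep the sequence $(A_n,B_n,g_n)$ and carry out the composition $h\circ g_n$ concretely with the bound $M\ep_n<1$, which makes the bookkeeping explicit rather than hidden in a conjugation-invariance step. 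One small slip in your write-up: the implication $d(h^{-1}(y),B_n)<\ep_n\Rightarrow d(y,h(B_n))<\eta_n$ is a statement of uniform continuity of $h$, not of $h^{-1}$ (equivalently, you want $h(B_n^{\ep_n})\subset h(B_n)^{\eta_n}$); since $K$ is compact and $h$ is a homeomorphism this is immaterial, but the label should be corrected.
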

\begin{proof}	
We deduce this statement from Corollary	\ref{smoothcontract} in the same way that we deduced Theorem \ref{Tits} from Proposition \ref{globalcontrcorollary}.

\begin{enumerate}
	\item By compactness, there exist a couple of finite sets $(A,B)$ satisfying the conclusion of Corollary \ref{smoothcontract} for every $\ep>0$.
	\item If $(A,B)$ satisfies the conclusion of Corollary \ref{smoothcontract}, so does $(u(A),v(B))$ for $u,v$ in $G$.
	\item We deduce from Proposition \ref{displacement} that we can find $(A,B)$ satisfying the conclusion with $A\cap B=\emptyset$.
	\item For any $\ep>0$, there exists $g$ in $G$ such that $g$, $A^\ep$ and $B^\ep$ satisfy the assumptions of Proposition \ref{MorseSmaleCriterium} and $g$ is a Morse-Smale diffeomorphism.
\end{enumerate}	

\begin{remark}
It is possible to shortcut a little bit the proof by noting that in Corollary \ref{smoothcontract} we actually have the stronger statement that $g$ (resp. $g^{-1}$) is $\ep$-Lipschitz on $K\backslash A^\ep$ (resp. $K\backslash B^\ep$) (see Remark \ref{diffremark}), which gives a global information from which the existence of Morse-Smale is easier to deduce.  But the criterion of existence given by  Proposition  \ref{MorseSmaleCriterium} seemed to us interesting enough to be noted.
\end{remark}

\end{proof}

\end{document}